\theoremstyle{plain}
\newtheorem{thm}{\protect\theoremname}
\theoremstyle{definition}
\newtheorem{example}{\protect\examplename}
\theoremstyle{plain}
\newtheorem{cor}{\protect\corollaryname}
\theoremstyle{remark}
\newtheorem{rem}{\protect\remarkname}
\theoremstyle{plain}
\newtheorem{prop}{\protect\propositionname}
\theoremstyle{plain}
\newtheorem{lem}{\protect\lemmaname}
\theoremstyle{definition}
\newtheorem{defn}{\protect\definitionname}
\DeclareSymbolFont{tipa}{T3}{cmr}{m}{n}
\DeclareMathAccent{\inbreve}{\mathalpha}{tipa}{16}
\newcommand{\subsetsim}{\mathrel{%
  \ooalign{\raise0.2ex\hbox{$\subset$}\cr\hidewidth\raise-0.8ex\hbox{\scalebox{0.9}{$\sim$}}\hidewidth\cr}}}
\newcommand{\supsetsim}{\mathrel{%
  \ooalign{\raise0.2ex\hbox{$\supset$}\cr\hidewidth\raise-0.8ex\hbox{\scalebox{0.9}{$\sim$}}\hidewidth\cr}}}
\newcommand{\subsetapprox}{\mathrel{%
  \ooalign{\raise0.4ex\hbox{$\subset$}\cr\hidewidth\raise-0.8ex\hbox{\scalebox{0.9}{$\approx$}}\hidewidth\cr}}}
\newcommand{\bone}{\mathbbm{1}}
\newcommand{\calA}{\mathcal{A}}
\newcommand{\calP}{\mathcal{P}}
\newcommand{\calX}{\mathcal{X}}
\def\cC{{\mathcal C}}
\def\Bern{\mathrm{Bern}}
\def\Unif{\mathrm{Unif}}
\def\supp{{\mathrm{supp}}}
\def\conv{\breve}
\def\conc{\inbreve}
\def\P{\mathsf{P}}
\def\Q{\mathsf{Q}}
\def\R{\mathsf{R}}
\def\C{\mathsf{C}}
\def\W{\mathsf{W}}
\def\S{{\mathsf{S}}}
\def\Levy{  d_{\rm P}}
\def\L{\mathsf{L}} 
\def\1{\mathbf{1}}
\def\d{{\text {\rm d}}}
\providecommand{\corollaryname}{Corollary}
\providecommand{\lemmaname}{Lemma}
\providecommand{\propositionname}{Proposition}
\providecommand{\remarkname}{Remark}
\providecommand{\theoremname}{Theorem}
\providecommand{\definitionname}{Definition}
\providecommand{\corollaryname}{Corollary}
\providecommand{\definitionname}{Definition}
\providecommand{\examplename}{Example}
\providecommand{\lemmaname}{Lemma}
\providecommand{\propositionname}{Proposition}
\providecommand{\remarkname}{Remark}
\providecommand{\theoremname}{Theorem}
\begin{document}
\title{Exact Exponents for Concentration and Isoperimetry in Product Polish
Spaces}
\author{Lei Yu\thanks{L. Yu is with the School of Statistics and Data Science, LPMC, KLMDASR,
and LEBPS, Nankai University, Tianjin 300071, China (e-mail: leiyu@nankai.edu.cn).
This work was supported by the National Key Research and Development
Program of China under grant 2023YFA1009604, the NSFC under grant
62101286, and the Fundamental Research Funds for the Central Universities
of China (Nankai University) under grant 054-63243076.}}
\maketitle
\begin{abstract}
In this paper, we derive variational formulas for the asymptotic exponents
(i.e., convergence rates) of the concentration and isoperimetric functions
in the product Polish probability space under certain mild assumptions.
These formulas are expressed in terms of relative entropies (which
are from information theory) and optimal transport cost functionals
(which are from optimal transport theory). Hence, our results verify
an intimate connection among information theory, optimal transport,
and concentration of measure or isoperimetric inequalities. In the
concentration regime, the corresponding variational formula is in
fact a dimension-free bound in the sense that this bound is valid
for any dimension. A cardinality bound for the alphabet of the auxiliary
random variable in the expression of the asymptotic isoperimetric
exponent is provided, which makes the expression computable by a finite-dimensional
program for the finite alphabet case. We lastly apply our results
to obtain an isoperimetric inequality in the classic isoperimetric
setting, which is asymptotically sharp under certain conditions. The
proofs in this paper are based on information-theoretic and optimal
transport techniques. 
\end{abstract}


\begin{IEEEkeywords}
Concentration of measure, isoperimetric inequality, optimal transport,
information-theoretic method 
\end{IEEEkeywords}





\section{Introduction}

Concentration of measure in a probability metric space refers to a
phenomenon that a slight enlargement of any measurable set of not
small probability will always have large probability. In the language
of functional analysis, it is equivalent to a phenomenon that the
value of any Lipschitz function is concentrated around its median.
The concentration of measure phenomenon was pushed forward in the
early 1970s by V. Milman in the study of the asymptotic geometry of
Banach spaces. It was then studied in depth by Milman and many other
authors including Gromov, Maurey, Pisier, Schechtman, Talagrand, Ledoux,
etc. In particular, Talagrand \cite{talagrand1995concentration} studied
the concentration of measure in product spaces equipped with product
probability measures, and derived a variety of concentration of measure
inequalities for these spaces. In information theory, concentration
of measure is known as the blowing-up lemma \cite{Ahls76,Marton86},
which was employed by Gács, Ahlswede, and Körner to prove the strong
converses of two coding problems in information theory.

It is worth mentioning that Marton is the first to introduce information-theoretic
techniques, especially transport-entropy inequalities, in the study
of the concentration of measure \cite{Marton86}, which yields an
elegant and short proof for this phenomenon. By developing a new transport-entropy
inequality, Talagrand extended her idea to the case of Gaussian measure
and Euclidean metric \cite{talagrand1996transportation}. Since then,
such a textbook beautiful argument became popular and emerged in many
books, e.g., \cite{ledoux2001concentration,RagSason,villani2003topics}.
By replacing the ``linear'' transport-entropy inequality in Marton's
argument with a ``nonlinear'' version, Gozlan and Léonard obtained
the sharp dimension-free bound on the concentration function \cite{gozlan2007large}.
In other words, their bound is exponentially tight in the sense that
the exponent of their bound asymptotically coincides with that of
the concentration function. Furthermore, Gozlan \cite{gozlan2009characterization}
also used Marton's argument to prove the equivalence between the Gaussian
bound of the concentration function and Talagrand's transport-entropy
inequality. Dembo \cite{dembo1997information} provided a new kind
of transport-entropy inequalities, and used them to recover several
results of Talagrand \cite{talagrand1995concentration}.

Ahlswede and Zhang \cite{ahlswede1999asymptotical,ahlswede1997identification}
focused on the isoperimetric regime of the concentration problem,
in which they assumed the set to be small enough such that its enlargement
is small as well. In this regime, the problem turns into an isoperimetric
problem where the difference between the enlargement and the original
set is regarded as the ``boundary'' of the set. They characterized
the asymptotic exponents for this problem by using information-theoretic
methods. Their results was used as a key tool to study the identification
problem \cite{ahlswede1997identification}. 

In this paper, we investigated the concentration (or isoperimetric)
problem in the product Polish space. Specifically, we minimize the
probability of the $t$-enlargement (or $t$-neighborhood) $A^{t}$
of a set $A$ under the condition that the probability of $A$ is
given. Here, different from the common setting in concentration of
measure, the probability of $A$ is not necessarily restricted to
be around $1/2$. The probability of $A$ could be small or large.
We now introduce the mathematical formulation.

Let $\mathcal{X}$ and $\mathcal{Y}$ be Polish spaces (i.e., separable
completely metrizable spaces, including Euclidean spaces and countable
metric spaces as special cases). Let $\Sigma(\mathcal{X})$ and $\Sigma(\mathcal{Y})$
be respectively the Borel $\sigma$-algebras on $\mathcal{X}$ and
$\mathcal{Y}$ that are generated by the topologies on $\mathcal{X}$
and $\mathcal{Y}$. Let $\mathcal{P}(\mathcal{X})$ and $\mathcal{P}(\mathcal{Y})$
denote the sets of probability measures (or distributions) on $\mathcal{X}$
and $\mathcal{Y}$ respectively. Let $P_{X}\in\mathcal{P}(\mathcal{X})$
and $P_{Y}\in\mathcal{P}(\mathcal{Y})$. In other words, $P_{X}$
and $P_{Y}$ are respectively the distributions of two random variables
$X$ and $Y$. Let $c:\mathcal{X}\times\mathcal{Y}\to[0,+\infty)$
be lower semi-continuous, which is called a cost function. Denote
$\mathcal{X}^{n}$ as the $n$-fold product space of $\mathcal{X}$.
For the product space $\mathcal{X}^{n}\times\mathcal{Y}^{n}$ and
given $c$, we consider an additive cost function $c_{n}$ on $\mathcal{X}^{n}\times\mathcal{Y}^{n}$
given by 
\[
c_{n}(x^{n},y^{n}):=\sum_{i=1}^{n}c(x_{i},y_{i})\quad\mbox{for }(x^{n},y^{n})\in\mathcal{X}^{n}\times\mathcal{Y}^{n},
\]
where $c$ given above is independent of $n$. Obviously, $c_{n}$
is lower semi-continuous since $c$ is lower semi-continuous.

For a set $A\subseteq\mathcal{X}^{n}$, denote its $t$-enlargement
under $c$ as 
\begin{equation}
A^{t}:=\bigcup_{x^{n}\in A}\{y^{n}\in\mathcal{Y}^{n}:c_{n}(x^{n},y^{n})\leq t\}.\label{eq:Gamma-1}
\end{equation}
To address the measurability of $A^{t}$, we assume that either of
the following two conditions holds throughout this paper. 
\begin{enumerate}
\item For lower semi-continuous $c$, we restrict $A$ to be a closed set. 
\item If $\mathcal{X}$ and $\mathcal{Y}$ are the same Polish space and
$c=d^{p}$, where $p>0$ and $d$ is a complete metric that induces
the topology on this Polish space, then $A$ can be any Borel set. 
\end{enumerate}
For the first case, since $\mathcal{X}^{n}$ and $\mathcal{Y}^{n}$
are Polish and a projection map is continuous, by definition, the
projection of a closed (or open) subset of $\mathcal{X}^{n}\times\mathcal{Y}^{n}$
to $\mathcal{X}^{n}$ is analytic (or Souslin) \cite{bogachev2007measure}.
Note that for closed $A$, $A^{t}$ is the projection of the closed
set $c_{n}^{-1}((-\infty,t])\cap(A\times\mathcal{Y}^{n})$ to $\mathcal{X}^{n}$.
So, the set $A^{t}$ is analytic and hence, universally measurable.
If we extend $P_{Y}^{\otimes n}$ to the collection of analytic sets,
then $P_{Y}^{\otimes n}(A^{t})$ is well defined. Hence, for this
case, we by default adopt this extension to avoid the measurability
problem. For the second case, for any Borel set $A$, $A^{t}$ is
always Borel (since it is countable intersections of Borel sets $\bigcup_{x^{n}\in A}\{y^{n}\in\mathcal{Y}^{n}:c_{n}(x^{n},y^{n})<t+\frac{1}{k}\},k=1,2,...$).

Define the isoperimetric function (or isoperimetric profile) as for
$a\in[0,1],t\ge0$, 
\begin{equation}
\Gamma^{(n)}(a,t):=\inf_{A:P_{X}^{\otimes n}(A)\ge a}P_{Y}^{\otimes n}(A^{t}),\label{eq:Gamma}
\end{equation}
where the set $A$ is assumed to satisfy either of the above two conditions.
We call $(a,t)\mapsto1-\Gamma^{(n)}(a,t)$ as the concentration function,
which reduces to the usual concentration function $t\mapsto1-\Gamma^{(n)}(\frac{1}{2},t)$
in the theory of concentration of measure when $a$ is set to $1/2$.
Throughout this paper, we set 
\begin{equation}
a=e^{-n\alpha},\;t=n\tau.\label{eq:alpha-tau}
\end{equation}
Define the isoperimetric and concentration exponents respectively
as\footnote{Throughout this paper, the base of $\log$ is $e$. Our results are
still true if the bases are chosen to other values, as long as the
bases of the logarithm and exponent are the same. } for $\alpha,\tau\ge0$, 
\begin{align}
E_{0}^{(n)}(\alpha,\tau) & :=-\frac{1}{n}\log\Gamma^{(n)}(e^{-n\alpha},n\tau)\label{eq:-28}\\
E_{1}^{(n)}(\alpha,\tau) & :=-\frac{1}{n}\log\left(1-\Gamma^{(n)}(e^{-n\alpha},n\tau)\right).\label{eq:E1}
\end{align}
In fact, 
\[
\Gamma^{(n)}(e^{-n\alpha},n\tau)=e^{-nE_{0}^{(n)}(\alpha,\tau)}=1-e^{-nE_{1}^{(n)}(\alpha,\tau)}.
\]

In the classic setting, $\mathcal{X}=\mathcal{Y}$ equipped with a
metric $d$ is a Polish metric space, and moreover, $P_{X}=P_{Y}=:P$
and $c=d^{p}$ with $p\ge1$. For a set $A$, its boundary measure
is defined by\footnote{For the discrete metric, this definition does not make sense, since
$(P^{\otimes n})^{+}(A)=0$ for any set $A$. So, in this case, $(P^{\otimes n})^{+}(A)$
can be defined by $(P^{\otimes n})^{+}(A):=P^{\otimes n}(A^{1})-P^{\otimes n}(A)$.} 
\begin{equation}
(P^{\otimes n})^{+}(A):=\liminf_{r\downarrow0}\frac{P^{\otimes n}(A^{r^{p}})-P^{\otimes n}(A)}{r}.\label{eq:perimeter}
\end{equation}
In the classic isoperimetric problem, the objective is to minimize
$(P^{\otimes n})^{+}(A)$ over all sets $A$ with a given probability. 

In this paper, we aim at characterizing the asymptotics of the concentration
and isoperimetric exponents in \eqref{eq:-28} and \eqref{eq:E1},
as well as applying these results to obtain an asymptotically sharp
inequality on the classic isoperimetric problem (under certain conditions). 

\subsection{Our Contributions}

Our contributions in this paper are as follows.  
\begin{enumerate}
\item We characterize the asymptotic concentration exponent $\lim_{n\to\infty}E_{1}^{(n)}(\alpha,\tau)$
(under certain mild assumptions) in terms of two fundamental quantities
from other fields---``relative entropy'' which comes from information
theory (or large deviations theory) and ``optimal transport cost''
which comes from the theory of optimal transport. The (conditional)
empirically typical sets are shown to be optimal in attaining the
asymptotic concentration exponent. Hence, this result further verifies
an intimate connection among concentration of measure, information
theory, and optimal transport. The obtained expression for $\lim_{n\to\infty}E_{1}^{(n)}(\alpha,\tau)$
is shown to be a dimension-free bound on $E_{1}^{(n)}(\alpha,\tau)$.
This bound is tighter than Marton's bound \cite{Marton86,marton1996bounding}
and an improved version by Gozlan and Léonard \cite{gozlan2005principe,gozlan2007large},
especially when the probability of the set is small. It also sharpens
Talagrand's concentration inequality in \cite{talagrand1995concentration}.
The improvement is due to that the single-letterization part in our
proof relies on the subadditivity of optimal transport (OT) costs
(or equivalently, relies on a new and more general transport-entropy
inequality), and bypasses the traditional transport-entropy inequality
in Marton's proof and the nonlinear transport-entropy inequality in
Gozlan and Léonard's proof. As applications, we also consider the
case that $c=d^{p}$ with $p\ge1$ and $d$ denoting a metric and
the case that $c$ is the Hamming metric. We obtain cleaner expressions
for the asymptotic concentration exponents for these two cases, and
also recover existing results for the setting of $a=\frac{1}{2}$,
including Gozlan and Léonard's \cite{gozlan2005principe,gozlan2007large}
and Alon, Boppana, and Spencer's in \cite{alon1998asymptotic}.  
\item We also provide upper and lower bounds for the asymptotic isoperimetric
exponent $\lim_{n\to\infty}E_{0}^{(n)}(\alpha,\tau)$ (under certain
mild assumptions) for Polish spaces. These bounds are also expressed
in terms of the relative entropy and the optimal transport cost. Under
a continuity assumption, the bounds coincide, which yields an exact
characterization of the asymptotic isoperimetric exponent. This result
is a generalization of Ahlswede and Zhang's \cite{ahlswede1999asymptotical}
from finite spaces to Polish spaces. In fact, similar to Ahlswede
and Zhang's proof, our proof also relies on the inherently typical
subset lemma, but requires new techniques  since the spaces are much
more general. 
\item Our another contribution is deriving dual formulas for the bounds
or expressions mentioned above for the asymptotic concentration or
isoperimetric exponents. By our dual formulas, on one hand, we verify
the equivalence between our formula and Alon, Boppana, and Spencer's
in \cite{alon1998asymptotic} for the asymptotic concentration exponent;
on the other hand, we provide a bound on the alphabet size of the
auxiliary random variable in the expression of the asymptotic isoperimetric
exponent. These two observations are not obvious from the perspective
of primal formulas. Previously, there was no bound on the alphabet
size of the auxiliary random variable, even for the finite alphabet
case considered by Ahlswede and Zhang \cite{ahlswede1999asymptotical}.
As explicitly mentioned in \cite[Remark 1 on p. 50]{ahlswede1997identification},
deriving cardinality bounds for the auxiliary random variable is not
easy. Deriving cardinality bounds is also important, since it makes
the expression ``computable'' for the finite alphabet case. That
is, it enables us to evaluate the expression by a finite-dimensional
program when the alphabets are finite. 
\item The isoperimetric problem mentioned above concerns thick boundaries.
In contrast, in the classic isoperimetric problem, the boundary is
extremely thin, as shown in \eqref{eq:perimeter}. We apply our results
to obtain the following isoperimetric inequality: 
\begin{align}
(P^{\otimes n})^{+}(A) & \ge n^{1-1/p}e^{-n\alpha}(\xi(\alpha)+o_{n}(1)),\label{eq:-67-1}
\end{align}
where $\xi(\alpha)$ is a certain function defined in \eqref{eq:xi}.
This inequality is asymptotically sharp under certain conditions.
\end{enumerate}

\subsection{Organization}

This paper is organized as follows. In Section \ref{subsec:Notations},
we introduce the notations used in this paper. In Section \ref{sec:Main-Results},
we state our main results, including a dimension-free bound for the
concentration exponent, the characterizations of the asymptotic concentration
exponent and the asymptotic isoperimetric exponent, and the dual formulas
for our bounds and expressions. We also discuss the connections of
the concentration or isoperimetric problems to the Strassen's optimal
transport problem in Section \ref{sec:Main-Results}. In the same
section, we also apply our results to obtain an isoperimetric inequality
for the classic isoperimetric setting, which is asymptotically sharp
in certain conditions. The proofs of these results are provided in
Section \ref{sec:Proof-of-Theorem-1}-\ref{sec:Proof-of-Theorem}.

\subsection{\label{subsec:Notations}Notations }

\subsubsection{Probability Theory }

Throughout this paper, for a topological space $\mathcal{Z}$, we
use $\Sigma(\mathcal{Z})$ to denote the Borel $\sigma$-algebra on
$\mathcal{Z}$ generated by the topology of $\mathcal{Z}$. Hence
$(\mathcal{Z},\Sigma(\mathcal{Z}))$ forms a measurable space. For
this measurable space, we denote the set of probability measures on
$(\mathcal{Z},\Sigma(\mathcal{Z}))$ as $\mathcal{P}(\mathcal{Z})$.
For a Polish space $\mathcal{Z}$, if $d$ is a complete metric that
induces the topology on this space, then $(\mathcal{Z},d)$ is called
a Polish metric space. For a Polish space $\mathcal{Z}$, if we equip
$\mathcal{P}(\mathcal{Z})$ with the weak topology, then the resultant
space is Polish as well. For brevity, we denote it as $(\mathcal{P}(\mathcal{Z}),\Sigma(\mathcal{P}(\mathcal{Z})))$.

As mentioned at the beginning of the introduction, $\mathcal{X}$
and $\mathcal{Y}$ are Polish spaces, and $P_{X}$ and $P_{Y}$ are
two probability measures defined respectively on $\mathcal{X}$ and
$\mathcal{Y}$. We also use $Q_{X},R_{X}$ to denote another two probability
measures on $\mathcal{X}$. The probability measures $P_{X},Q_{X},R_{X}$
can be thought as the push-forward measures (or the distributions)
induced jointly by the same measurable function $X$ (random variable)
from an underlying measurable space to $\mathcal{X}$ and by different
probability measures $\P,\Q,\R$ defined on the underlying measurable
space. Without loss of generality, we assume that $X$ is the identity
map, and $\P,\Q,\R$ are the same as $P_{X},Q_{X},R_{X}$. So, $P_{X},Q_{X},R_{X}$
could be independently specified to arbitrary probability measures.
We say that all probability measures induced by the underlying measure
$\P$, together with the corresponding measurable spaces, constitute
the $\P$-system. So, $P_{X}$ is in fact the distribution of the
random variable $X$ in the $\P$-system, where the letter ``$P$''
in the notation $P_{X}$ refers to the system and the subscript ``$X$''
refers to the random variable. When emphasizing the random variables,
we write $X\sim P_{X}$ to indicate that $X$ follows the distribution
$P_{X}$ in the $\P$-system. For a random variable (a measurable
function) $f$ from $\mathcal{X}$ to another measurable space $\mathcal{Z}$,
the distribution $P_{f(X)}$ of $f$ in different systems is clearly
different, e.g., it is $P_{X}\circ f^{-1}$ in the $\P$-system, but
it is $Q_{X}\circ f^{-1}$ in the $\Q$-system.

We use $P_{X}\otimes P_{Y}$ to denote the product of $P_{X}$ and
$P_{Y}$, and $P_{X}^{\otimes n}$ (resp. $P_{Y}^{\otimes n}$) to
denote the $n$-fold product of $P_{X}$ (resp. $P_{Y}$). For a probability
measure $P_{X}$ and a transition probability measure (or Markov kernel)
$P_{Y|X}$ from $\mathcal{X}$ to $\mathcal{Y}$, we denote $P_{X}P_{Y|X}$
as the joint probability measure induced by $P_{X}$ and $P_{Y|X}$.
Here $P_{Y|X}$ is called the regular conditional distribution of
$P_{X}P_{Y|X}$. We denote $P_{Y}$ or $P_{X}\circ P_{Y|X}$ as the
marginal distribution on $Y$ of the joint distribution $P_{X}P_{Y|X}$.
Moreover, we can pick up probability measures or transition probabilities
from different probability systems to constitute a joint probability
measure, e.g., $P_{X}Q_{Y|X}$. For a distribution $P_{X}$ on $\mathcal{X}$
and a measurable subset $A\subseteq\mathcal{X}$, $P_{X}(\cdot|A)$
denotes the conditional probability measure given $A$. For brevity,
we write $P_{X}(x):=P_{X}(\{x\}),x\in\mathcal{X}$. In particular,
if $X\sim P_{X}$ is discrete, the restriction of $P_{X}$ to the
set of singletons corresponds to the probability mass function of
$X$ in the $\P$-system. We denote $x^{n}=(x_{1},x_{2},\cdots,x_{n})\in\mathcal{X}^{n}$
as a sequence in $\mathcal{X}^{n}$. Given $x^{n}$, denote $x_{i}^{k}=(x_{i},x_{i+1},\cdots,x_{k})$
as a subsequence of $x^{n}$ for $1\le i\le k\le n$, and $x^{k}:=x_{1}^{k}$.
For a probability measure $P_{X^{n}}$ on $\mathcal{X}^{n}$, we use
$P_{X_{k}|X^{k-1}}$ to denote the regular conditional distribution
of $X_{k}$ given $X^{k-1}$ induced by $P_{X^{n}}$. For a measurable
function $f:\mathcal{X}\to\mathbb{R}$, sometimes we adopt the notation
$P_{X}(f)=\int_{\mathcal{X}}f\ \mathrm{d}P_{X}$.

Given $n\geq1$, the empirical measure (also known as type for the
finite alphabet case in information theory \cite{Csi97,Dembo}) for
a sequence $x^{n}\in\mathcal{X}^{n}$ is 
\[
\L_{x^{n}}:=\frac{1}{n}\sum_{i=1}^{n}\delta_{x_{i}}
\]
where $\delta_{x}$ is Dirac mass at the point $x\in\mathcal{X}$.
Let $\L_{n}:x^{n}\in\mathcal{X}^{n}\mapsto\L_{x^{n}}\in\calP(\mathcal{X})$
be the empirical measure map. For a pair of sequences $(x^{n},y^{n})\in\mathcal{X}^{n}\times\mathcal{Y}^{n}$,
the empirical joint measure $\L_{x^{n},y^{n}}$ and empirical conditional
measure $\L_{y^{n}|x^{n}}$ are defined similarly. Obviously, empirical
measures (or empirical joint measures) for $n$-length sequences are
discrete distributions whose probability masses are multiples of $1/n$.

\subsubsection{Information Theory }

For two distributions $P,Q$ defined on the same space, the relative
entropy {[}or Kullback-Leibler (KL) divergence{]} of $Q$ from $P$
is defined as 
\[
D(Q\|P):=\begin{cases}
\int\log(\frac{\mathrm{d}Q}{\mathrm{d}P})\mathrm{d}Q, & P\ll Q\\
\infty, & \textrm{otherwise}
\end{cases}.
\]
For brevity, we denote binary relative entropy function $D(p\|q):=D(\Bern(p)\|\Bern(q))$
where $p,q\in[0,1]$. Define the conditional version as $D(Q_{X|W}\|P_{X|W}|Q_{W}):=D(Q_{X|W}Q_{W}\|P_{X|W}Q_{W})$.

We use $B_{\delta}(x):=\{x'\in\mathcal{X}:d(x,x')<\delta\}$ and $B_{\delta]}(x):=\{x'\in\mathcal{X}:d(x,x')\leq\delta\}$
to respectively denote an open ball and a closed ball. We use $\overline{A}$,
$A^{o}$, and $A^{c}:=\mathcal{X}\backslash A$ to respectively denote
the closure, interior, and complement of the set $A\subseteq\mathcal{X}$.
Denote the Lévy--Prokhorov metric on $\mathcal{P}(\mathcal{X})$
as 
\begin{align*}
\Levy(Q_{X}',Q_{X}) & =\inf\{\delta>0:Q_{X}'(A)\le Q_{X}(A_{\delta})+\delta,\\
 & \qquad\qquad\forall\textrm{ closed }A\subseteq\mathcal{X}\}
\end{align*}
with $A_{\delta}:=\bigcup_{x\in A}\{x'\in\mathcal{X}:d(x,x')<\delta\}$,
which is compatible with the weak topology for the Polish metric space
$(\mathcal{X},d)$. Denote the total variation (TV) distance as 
\[
\|Q_{X}'-Q_{X}\|_{\mathrm{TV}}:=\sup_{A}\left\{ Q_{X}'(A)-Q_{X}(A)\right\} ,
\]
where the supremum is taken over all measurable $A$ in $\mathcal{P}(\mathcal{X})$.
The supremum here is in fact a maximum. Denote the sublevel set of
the relative entropy (or the divergence ``ball'') as $D_{\epsilon]}(P_{X}):=\{Q_{X}:D(Q_{X}\|P_{X})\le\epsilon\}$
for $\epsilon\ge0$. The Lévy--Prokhorov metric, the TV distance,
and the relative entropy admit the following relation:\footnote{Here, $\frac{1}{2}$ in \eqref{eq:D-TV-L} should be replaced by $\frac{1}{2\log e}$
if the base of the logarithm in the relative entropy is not $e$.
Accordingly, $2\epsilon^{2}$ in \eqref{eq:D-B} should be replaced
by $2\epsilon^{2}\log e$. } For any $Q,P\in\mathcal{P}(\mathcal{X})$, 
\begin{equation}
\Levy(Q,P)\le\|Q-P\|_{\mathrm{TV}}\le\sqrt{\frac{1}{2}D(Q\|P)},\label{eq:D-TV-L}
\end{equation}
which implies for $\epsilon\ge0$, 
\begin{equation}
B_{\epsilon]}(P)\supseteq D_{2\epsilon^{2}]}(P).\label{eq:D-B}
\end{equation}
The first inequality in \eqref{eq:D-TV-L} follows by definition \cite{gibbs2002choosing},
and the second inequality is known as Pinsker's inequality.

For a Polish space $\mathcal{X}$ and an empirical measure $T$ of
an $n$-length sequence in $\mathcal{X}^{n}$, $\L_{n}^{-1}(T)$ is
called the empirical class of $T$. When $\mathcal{X}$ is finite,
an empirical class is also called a type class \cite{Csis00}. For
a Polish space $\mathcal{X}$ and $\epsilon>0$, the empirically $\epsilon$-typical
set of $P$ \cite{mitran2015on} is defined as 
\begin{align}
 & \mathcal{T}_{\epsilon}^{(n)}(P):=\L_{n}^{-1}(B_{\epsilon]}(P)),\label{eq:typ_set}
\end{align}
where $B_{\epsilon]}(P)$ denotes the closed ball of center $P$ and
radius $\epsilon$ under the Lévy--Prokhorov metric. Since the empirical
measure map $\L_{n}$ is continuous under the weak topology, $\mathcal{T}_{\epsilon}^{(n)}(P)$
is closed in $\mathcal{X}^{n}$. Moreover, by Sanov's theorem \cite{Dembo},
the empirically typical set is a high probability set under the product
measure $P^{\otimes n}$. When $\mathcal{X}$ is finite and equips
with a Hamming metric, the Lévy--Prokhorov metric reduces to the
TV distance. So, for this case, 
\begin{align}
 & \mathcal{T}_{\epsilon}^{(n)}(P)=\big\{ x^{n}\in\mathcal{X}^{n}:\sum_{a\in\mathcal{X}}|\L_{x^{n}}(a)-P(a)|\leq2\epsilon\big\}.\label{eqn:typ_set-1}
\end{align}
For a transition probability measure $P_{X|W}$ from a finite set
$\mathcal{W}$ to a Polish space $\mathcal{X}$ and for $\epsilon>0$,
denote $B_{\epsilon]}(P_{X|W}):=\{R_{X|W}:R_{X|W=w}\in B_{\epsilon]}(P_{X|W=w}),\forall w\in\mathcal{W}\}$,
which is a closed ball of radius $\epsilon$ in $\mathcal{P}(\mathcal{X}\times\mathcal{Y}|\mathcal{W})$
equipped with the metric $(R_{X|W},P_{X|W})\mapsto\max_{w}\Levy(R_{X|W=w},P_{X|W=w})$.
Given a sequence $w^{n}$, define the conditional empirically $\epsilon$-typical
set{} of $P_{X|W}$ w.r.t. $w^{n}$ as 
\begin{align*}
 & \mathcal{T}_{\epsilon}^{(n)}(P_{X|W}|w^{n})\\
 & :=\L_{n}^{-1}(B_{\epsilon]}(P_{X|W})|w^{n})\\
 & =\big\{ x^{n}\in\mathcal{X}^{n}:\L_{x^{n}|w^{n}}(\cdot|b)\in B_{\epsilon]}(P_{X|W=b}),\forall b\in\mathcal{W}\big\}.
\end{align*}

For $(X,Y)\sim Q_{XY}$, the mutual information between $X$ and $Y$
is denoted as $I_{Q}(X;Y)=D(Q_{XY}\|Q_{X}\otimes Q_{Y})$. Denote
the conditional mutual information as 
\[
I_{Q}(X;Y|W)=\mathbb{E}_{Q_{W}}[D(Q_{XY|W}\|Q_{X|W}\otimes Q_{Y|W})].
\]
For discrete random variables $(X,Y)\sim Q_{XY}$, the (Shannon) entropy
\[
H_{Q}(X)=-\sum_{x}Q_{X}(x)\log Q_{X}(x),
\]
and the conditional (Shannon) entropy 
\[
H_{Q}(X|Y)=-\sum_{x,y}Q_{XY}(x,y)\log Q_{X|Y}(x|y).
\]
For brevity, we denote the binary entropy function $H(p):=H_{\Bern(p)}(X)=-p\log p-(1-p)\log(1-p)$
where $p\in[0,1]$. In fact, for discrete random variables, $I_{Q}(X;Y)=H_{Q}(X)-H_{Q}(X|Y)$. 

\subsubsection{Optimal Transport }

In this paper, our results involve the OT cost functional, which is
introduced now. The coupling set of $(P_{X},P_{Y})$ is defined as
\[
\cC(P_{X},P_{Y}):=\Biggl\{\begin{array}{l}
P_{XY}\in\mathcal{P}(\mathcal{X}\times\mathcal{Y}):\,\\
P_{XY}(A\times\mathcal{Y})=P_{X}(A),\forall A\in\Sigma(\mathcal{X}),\\
P_{XY}(\mathcal{X}\times B)=P_{Y}(B),\forall B\in\Sigma(\mathcal{Y})\:
\end{array}\Biggr\}.
\]
Distributions in $\cC(P_{X},P_{Y})$ are termed couplings of $(P_{X},P_{Y})$.
The OT cost between $P_{X}$ and $P_{Y}$ is defined as\footnote{The existence of the minimizers are well-known; see, e.g., \cite[Theorem 1.3]{villani2003topics}.
Furthermore, when the (joint) distribution of the random variables
involved in an expectation is clear from context, we will omit the
subscript ``$(X,Y)\sim P_{XY}$''.} 
\begin{equation}
\C(P_{X},P_{Y}):=\min_{P_{XY}\in\cC(P_{X},P_{Y})}\mathbb{E}_{(X,Y)\sim P_{XY}}[c(X,Y)].\label{eq:OT}
\end{equation}
Any $P_{XY}\in\cC(P_{X},P_{Y})$ attaining $\C(P_{X},P_{Y})$ is called
an OT plan. The minimization problem in \eqref{eq:OT} is called the
Monge--Kantorovich's OT problem \cite{villani2003topics}. The functional
$(P_{X},P_{Y})\in\mathcal{P}(\mathcal{X})\times\mathcal{P}(\mathcal{Y})\mapsto\C(P_{X},P_{Y})\in[0,+\infty)$
is called the OT (cost) functional. If $\mathcal{X}=\mathcal{Y}$,
$d$ is a complete metric that induces the topology on this space
{[}i.e., $(\mathcal{X},d)$ is a Polish metric space{]}, and $c=d^{p}$
with $p\ge1$, then $\W_{p}(P_{X},P_{Y}):=(\C(P_{X},P_{Y}))^{1/p}$
is the so-called $p$-th Wasserstein metric between $P_{X}$ and $P_{Y}$.
For the $n$-dimensional case, $\W_{p}(P_{X^{n}},P_{Y^{n}}):=(\C(P_{X^{n}},P_{Y^{n}}))^{1/p}$
with $c_{n}(x^{n},y^{n})=\sum_{i=1}^{n}d^{p}(x_{i},y_{i})$ is the
$p$-th Wasserstein metric between $P_{X^{n}}$ and $P_{Y^{n}}$ for
the product metric $d_{n}(x^{n},y^{n})=c_{n}(x^{n},y^{n})^{1/p}$
where $p\ge1$.

Furthermore, for another distribution $P_{W}$ on a Polish space $\mathcal{W}$,
the conditional coupling set of transition probability measures $P_{X|W}$
and $P_{Y|W}$ is defined as 
\begin{align*}
 & \cC(P_{X|W},P_{Y|W})\\
 & :=\Biggl\{\begin{array}{l}
P_{XY|W}\in\mathcal{P}(\mathcal{X}\times\mathcal{Y}|\mathcal{W}):\\
P_{XY|W=w}\in\cC(P_{X|W=w},P_{Y|W=w}),\\
\qquad\forall w\in\mathcal{W}\:
\end{array}\Biggr\},
\end{align*}
where $\mathcal{P}(\mathcal{X}\times\mathcal{Y}|\mathcal{W})$ denotes
the set of transition probability measures from $\mathcal{W}$ to
$\mathcal{X}\times\mathcal{Y}$. The conditional OT cost between transition
probability measures $P_{X|W}$ and $P_{Y|W}$ given $P_{W}$ is defined
as 
\begin{align}
 & \C(P_{X|W},P_{Y|W}|P_{W})\nonumber \\
 & :=\min_{P_{XY|W}\in\cC(P_{X|W},P_{Y|W})}\mathbb{E}_{(W,X,Y)\sim P_{W}P_{XY|W}}[c(X,Y)],\label{eq:OT-2-1}
\end{align}
where $P_{W}P_{XY|W}$ denotes the joint probability measure induced
by $P_{W}$ and $P_{XY|W}$. The conditional OT cost can be alternatively
expressed as\footnote{In other words, the minimization in \eqref{eq:OT-2-1} can be taken
in a pointwise way for each $w$. For optimal $P_{XY}^{(w)}$ attaining
$\C(P_{X|W=w},P_{Y|W=w})$, the measurability of $w\mapsto P_{XY}^{(w)}(B),\,B\in\Sigma(\mathcal{X}\times\mathcal{Y})$
can be addressed by measurable selection theorems, e.g., \cite[Proposition 7.50]{bertsekas1996stochastic}.} 
\begin{equation}
\C(P_{X|W},P_{Y|W}|P_{W})=\mathbb{E}_{P_{W}}[\C(P_{X|W},P_{Y|W})].\label{eq:OT-1}
\end{equation}

\subsubsection{Others}

We use $f(n)=o_{n}(1)$ to denote that $f(n)\to0$ pointwise as $n\to+\infty$.
When there is no specification, by default, we denote $\inf\emptyset:=+\infty,\;\sup\emptyset:=-\infty$,
and $[k]:=\{1,2,...,k\}$. Denote $\conv{g}$ as the lower convex
envelope of a function $g$, and $\conc g$ as the upper concave envelope
of $g$.


\section{\label{sec:Main-Results}Main Results }

\subsection{Asymptotic Concentration Exponent}

\subsubsection{\label{subsec:General-Cost}General Cost }

We now characterize the asymptotic concentration exponent $\lim_{n\to\infty}E_{1}^{(n)}(\alpha,\tau)$.
To this end, given $P_{X},P_{Y}$, and $c$, we define 
\begin{align}
\phi(\alpha,\tau) & :=\inf_{\substack{Q_{X}\in\mathcal{P}(\mathcal{X}),Q_{Y}\in\mathcal{P}(\mathcal{Y}):\\
D(Q_{X}\|P_{X})\le\alpha,\C(Q_{X},Q_{Y})>\tau
}
}D(Q_{Y}\|P_{Y}).\label{eq:phi}
\end{align}
Denote $\conv{\phi}(\alpha,\tau)$ as the lower convex envelope of
$\phi(\alpha,\tau)$, which can be also expressed as 
\begin{equation}
\conv{\phi}(\alpha,\tau)=\inf_{\substack{Q_{X|W},Q_{Y|W},Q_{W}:\\
D(Q_{X|W}\|P_{X}|Q_{W})\le\alpha,\\
\C(Q_{X|W},Q_{Y|W}|Q_{W})>\tau
}
}D(Q_{Y|W}\|P_{Y}|Q_{W}),\label{eq:phi_lce-1}
\end{equation}
where $W$ is an auxiliary random variable defined on a Polish space.
However, by Carathéodory's theorem, the alphabet size of $Q_{W}$
can be restricted to be no larger than $4$. In fact, the alphabet
size can be further restricted to be no larger than $3$, since it
suffices to consider the boundary points of the convex hull of 
\begin{align*}
 & \big\{\big(D(Q_{X|W=w}\|P_{X}),D(Q_{Y|W=w}\|P_{Y}),\\
 & \qquad\qquad\C(Q_{X|W=w},Q_{Y|W=w})\big)\big\}_{w\in\mathcal{W}}.
\end{align*}

To characterize the asymptotic concentration exponent, we need the
following assumption. Define the $(\mathcal{X},\epsilon)$-smooth
 OT functional as 
\[
\C_{\mathcal{X},\epsilon}(Q_{X},Q_{Y}):=\inf_{Q_{X}':\Levy(Q_{X},Q_{X}')\le\epsilon}\C(Q_{X}',Q_{Y}).
\]
By definition, $\C_{\mathcal{X},\epsilon}(Q_{X},Q_{Y})\le\C_{\mathcal{X},0}(Q_{X},Q_{Y})=\C(Q_{X},Q_{Y})$,
and by the lower semicontinuity of the OT functional, $\lim_{\epsilon\downarrow0}\C_{\mathcal{X},\epsilon}(Q_{X},Q_{Y})\ge\C(Q_{X},Q_{Y})$.
So, $\lim_{\epsilon\downarrow0}\C_{\mathcal{X},\epsilon}(Q_{X},Q_{Y})=\C(Q_{X},Q_{Y})$
pointwise. 

\textbf{Assumption 1} (Uniform Convergence of $(\mathcal{X},\epsilon)$-Smooth
OT Functional): We assume that there is a function $\delta(\epsilon):(0,\infty)\to(0,\infty)$
vanishing as $\epsilon\downarrow0$ such that 
\[
\C_{\mathcal{X},\epsilon}(Q_{X},Q_{Y})\ge\C(Q_{X},Q_{Y})-\delta(\epsilon)
\]
holds for all $(Q_{X},Q_{Y})$. In other words, $\C_{\mathcal{X},\epsilon}(Q_{X},Q_{Y})\to\C(Q_{X},Q_{Y})$
as $\epsilon\downarrow0$ uniformly for all $(Q_{X},Q_{Y})$. 

Obviously, if the optimal transport cost functional $(Q_{X},Q_{Y})\mapsto\C(Q_{X},Q_{Y})$
is uniformly continuous under the Lévy--Prokhorov metric (which was
assumed by the author in \cite{yu2020asymptotics} in studying the
asymptotics of Strassen's optimal transport problem), then Assumption
1 holds. The following two examples satisfying Assumption 1 were provided
in \cite{yu2020asymptotics}. 
\begin{example}[Countable Alphabet and Bounded Cost]
\label{exa:1} $\mathcal{X}$ and $\mathcal{Y}$ are countable sets
and $c$ is bounded (i.e., $\sup_{x,y}c(x,y)<\infty$). 
\end{example}
\begin{example}[Wasserstein Metric Induced by a Bounded Metric\footnote{Example \ref{exa:2} satisfying Assumption 1 follows by the fact that
the Wasserstein metric induced by a bounded metric $d$ is equivalent
to the Lévy--Prokhorov metric in the sense that $\Levy^{p+1}\le\W_{p}^{p}\le\Levy^{p}+d_{\sup}^{p}\Levy$
where $d_{\sup}=\sup_{x,x'\in\mathcal{X}}d(x,x')$ is the diameter
of $\mathcal{X}$ \cite{gibbs2002choosing}.}]
\label{exa:2} $\mathcal{X}=\mathcal{Y}$ equipped with a bounded
metric $d$ is a Polish metric space, i.e., $\sup_{x,y}d(x,y)<\infty$.
The cost function is set to $c=d^{p}$ for $p\ge1$, and hence, $\C=\W_{p}^{p}$. 
\end{example}
The following theorem characterizes the asymptotic concentration exponent.
The proof is provided in Section \ref{sec:Proof-of-Theorem-1}. For
a function $f:[0,\infty)^{k}\to[0,\infty]$ with $k\ge1$, denote
the effective domain of $f$ as 
\[
\mathrm{dom}f=\left\{ x^{k}\in[0,\infty)^{k}:f(z)<\infty\right\} .
\]
By definition, $\mathrm{dom}\conv{f}=\mathrm{dom}\conc f=\mathrm{dom}f$
if $f$ is monotonous in each parameter (given others). 
\begin{thm}[Asymptotics of $E_{1}^{(n)}$ and Dimension-Free Bound\footnote{The terminology ``dimension-free bound'' here denotes that the tuple
of the normalized enlargement parameter $\tau$, the (normalized)
exponent of $P_{X}^{\otimes n}(A)$, and the (normalized) exponent
of $1-P_{Y}^{\otimes n}(A^{n\tau})$ verifies the same inequality
for all $n$. This concept is weaker than that in \cite{gozlan2009characterization}
and reduces to the latter when $P_{X}^{\otimes n}(A)$ is fixed to
be around $1/2$, $c$ is set to $d^{2}$, and the bound on the exponent
of $1-P_{Y}^{\otimes n}(A^{n\tau})$ in the inequality satisfied by
the tuple is the quadratic form. Hence, the ``dimension-free bound''
here could be satisfied by a much larger class of probability metric
spaces.}]
\label{thm:LD} For Polish $\mathcal{X}$ and $\mathcal{Y}$, the
following hold. 
\begin{enumerate}
\item For any $\alpha\ge0,\tau\ge0$ and any positive integer $n$, 
\begin{equation}
E_{1}^{(n)}(\alpha,\tau)\geq\conv{\phi}(\alpha,\tau).\label{eq:bound2}
\end{equation}
\item Under Assumption 1, for any $(\alpha,\tau)$ in the interior of $\mathrm{dom}\conv{\phi}$,
it holds that $\lim_{n\to\infty}E_{1}^{(n)}(\alpha,\tau)=\conv{\phi}(\alpha,\tau).$ 
\item Let $(a_{n})$ be a sequence such that $e^{-o(n)}\le a_{n}\le1-e^{-o(n)}$
(and hence $\alpha_{n}=-\frac{1}{n}\log a_{n}\to0$). Then, under
Assumption 1, it holds that for any $\tau$ in the interior of $\mathrm{dom}\conv{\varphi}$,
\begin{align*}
\lim_{\alpha\downarrow0}\conv{\phi}(\alpha,\tau) & \le\liminf_{n\to\infty}E_{1}^{(n)}(\alpha_{n},\tau)\\
 & \le\limsup_{n\to\infty}E_{1}^{(n)}(\alpha_{n},\tau)\le\conv{\varphi}(\tau),
\end{align*}
where 
\begin{equation}
\varphi(\tau):=\phi(0,\tau)=\inf_{Q_{Y}:\C(P_{X},Q_{Y})>\tau}D(Q_{Y}\|P_{Y}).\label{eq:varphi}
\end{equation}
\end{enumerate}
\end{thm}
The condition $e^{-o(n)}\le a_{n}\le1-e^{-o(n)}$ implies that the
sequence $(a_{n})$ does not approach $0$ or $1$ too fast, in the
sense that the sequence $(a_{n})$ is sandwiched between a sequence
that subexponentially approaches zero and a sequence that subexponentially
approaches one.

The expression $\conv{\phi}(\alpha,\tau)$ for the asymptotic concentration
exponent is elegant in the sense that it is expressed in terms of
two fundamental quantities from other fields---``relative entropy''
which comes from information theory (or large deviations theory) and
``optimal transport cost'' which comes from the theory of optimal
transport. Hence, this verifies an intimate connection among concentration
of measure, information theory, and optimal transport.

The first bound like the one in \eqref{eq:bound2} was derived by
Marton \cite{Marton86,marton1996bounding}, which was improved by
Gozlan and Léonard in \cite{gozlan2005principe,gozlan2007large}.
Our proof relies on the subadditivity of OT costs, instead of traditional
transport-entropy inequalities, leading to that our bound in \eqref{eq:bound2}
is strictly better than Gozlan and Léonard's especially when the measure
of the set is small. When $c=d^{p}$ and $\alpha$ is close to zero,
e.g., $\alpha=\frac{1}{n}\log2$ (i.e., $a=\frac{1}{2}$; recall the
relation $a=e^{-n\alpha}$ in \eqref{eq:alpha-tau}), our bound and
theirs do not differ too much, and as $n\to\infty$, they coincide
asymptotically. However, if $\alpha$ is bounded away from zero, our
bound is usually asymptotically tight but theirs are not. 

The bound in \eqref{eq:bound2} can be expressed as an exponentially
sharp version of Talagrand's concentration inequalities. Given $P_{X},P_{Y}$,
and $c$, we define for $\tau\ge0,\lambda\in[0,1]$, 
\begin{align}
\phi_{\lambda}(\tau) & :=\inf_{Q_{X},Q_{Y}:\C(Q_{X},Q_{Y})>\tau}(1-\lambda)D(Q_{Y}\|P_{Y})\nonumber \\
 & \qquad\qquad\qquad+\lambda D(Q_{X}\|P_{X}),\label{eq:phi-2}
\end{align}
which is a nonlinear variant of the transport-entropy inequalities
in \cite[Definition 4.1]{gozlan2017kantorovich}. Denote $\conv{\phi}_{\lambda}(\tau)$
as the lower convex envelope of $\phi_{\lambda}(\tau)$. 
\begin{cor}[Improved Talagrand's Concentration Inequality]
\label{thm:bound-1} For Polish $\mathcal{X}$ and $\mathcal{Y}$,
it holds that for any $\tau\ge0,\lambda\in[0,1]$, $t=n\tau,$ and
any $A$, 
\begin{equation}
P_{Y}^{\otimes n}((A^{t})^{c})^{1-\lambda}P_{X}^{\otimes n}(A)^{\lambda}\leq e^{-n\conv{\phi}_{\lambda}(\tau)},\label{eq:-5}
\end{equation}
where $\conv{\phi}_{\lambda}$ can be alternatively expressed as 
\begin{equation}
\conv{\phi}_{\lambda}(\tau)=\inf_{\alpha\ge0}\lambda\alpha+(1-\lambda)\conv{\phi}(\alpha,\tau).\label{eq:-36}
\end{equation}
Moreover, under Assumption 1 and given any $\tau$ which together
with the optimal $\alpha$ attaining the infimum in \eqref{eq:-36}
is in the interior of $\mathrm{dom}\conv{\phi}$, the inequality in
\eqref{eq:-5} is exponentially sharp in the sense that there is a
sequence of sets $A_{n}$ such that the induced exponents of two sides
asymptotically coincide. 
\end{cor}
\begin{rem}
The kind of inequalities like the one in \eqref{eq:-5} are the so-called
Talagrand's concentration inequalities; see a weaker version for Hamming
metric in \cite[p. 86]{talagrand1995concentration}. An inequality
weaker than the one in \eqref{eq:-5} was proven by Gozlan et al.
\cite{gozlan2017kantorovich} in which linear bounds on $\phi_{\lambda}(\tau)$,
instead of $\phi_{\lambda}(\tau)$ itself, were applied in the proof.
 
\end{rem}
\begin{rem}
The function $\phi_{\lambda}$ suggests a new and more general class
of transport-entropy inequalities, which plays the same role in our
proof of Theorem \ref{thm:LD} as the traditional transport-entropy
inequalities in Marton's proof \cite{Marton86,marton1996bounding}.
 
\end{rem}
\begin{IEEEproof}
It holds that 
\begin{align}
 & -\frac{1}{n}\log\left(P_{Y}^{\otimes n}((A^{t})^{c})^{1-\lambda}P_{X}^{\otimes n}(A)^{\lambda}\right)\nonumber \\
 & \geq\inf_{\alpha\ge0}\lambda\alpha+(1-\lambda)\conv{\phi}(\alpha,\tau)\label{eq:-6}\\
 & =\inf_{\substack{\alpha\ge0,Q_{X|W},Q_{Y|W},Q_{W}:\\
D(Q_{X|W}\|P_{X}|Q_{W})\le\alpha,\\
\C(Q_{X|W},Q_{Y|W}|Q_{W})>\tau
}
}\lambda\alpha+(1-\lambda)D(Q_{Y|W}\|P_{Y}|Q_{W})\nonumber \\
 & =\inf_{\substack{Q_{X|W},Q_{Y|W},Q_{W}:\\
\C(Q_{X|W},Q_{Y|W}|Q_{W})>\tau
}
}\lambda D(Q_{X|W}\|P_{X}|Q_{W})\nonumber \\
 & \qquad\qquad+(1-\lambda)D(Q_{Y|W}\|P_{Y}|Q_{W})\\
 & =\conv{\phi}_{\lambda}(\tau).\nonumber 
\end{align}
From the alternative expression of $\conv{\phi}_{\lambda}(\tau)$
in \eqref{eq:-6} and for each $\alpha$, choosing $\lambda$ such
that $\frac{-\lambda}{1-\lambda}$ is a subgradient of $\alpha'\mapsto\conv{\phi}(\alpha',\tau)$
at $\alpha$, we obtain the inequality in \eqref{eq:bound2} from
the inequality in \eqref{eq:-6} (or equivalently, the one in \eqref{eq:-5}).
Hence, Theorem \ref{thm:bound-1} is in fact equivalent to the bound
in \eqref{eq:bound2}, and the exponential sharpness of \eqref{eq:-5}
is equivalent to the asymptotic tightness of \eqref{eq:bound2}. 
\end{IEEEproof}

\subsubsection{Complete Metric }

An interesting special case is that $(\mathcal{X},P_{X})$ and $(\mathcal{Y},P_{Y})$
are the same Polish probability space and the cost function $c$ is
set to $d^{p}$ with $p\ge1$ and $d$ denoting a complete metric
compatible with the topology on this space. In other words, $\C=\W_{p}^{p}$.
For this case, we now remove Assumption 1 from Theorem \ref{thm:LD}.
Furthermore, to further simplify Statement 3 of Theorem \ref{thm:LD},
we need the following Assumption 2.

\textbf{Assumption 2} (Positivity Condition): $\conv{\varphi}_{X}(\tau)$
is strictly positive for all sufficiently small (equivalently for
all) $\tau>0$, where 
\begin{equation}
\varphi_{X}(\tau):=\inf_{Q_{X}:\C(P_{X},Q_{X})>\tau}D(Q_{X}\|P_{X}).\label{eq:varphi_X}
\end{equation}
In particular, if the cost $c$ is set to $d^{p}$ with $p\ge1$
and $d$ denoting a metric, and define 
\begin{equation}
\varphi_{X,\ge}(\tau):=\inf_{Q_{X}:\C(P_{X},Q_{X})\ge\tau}D(Q_{X}\|P_{X}),\label{eq:varphi_X-2}
\end{equation}
then the assumption is equivalent to saying that $\conv{\varphi}_{X,\ge}(\tau)$
is strictly increasing in $\tau\ge0$ (since $\conv{\varphi}_{X,\ge}(0)=0$).

An equivalent statement of Assumption 2 is that given $P_{X}$, if
$\C(P_{X},Q_{X})$ is bounded away from zero, then so is $D(Q_{X}\|P_{X})$.
In other words, given $P_{X}$, convergence in information (i.e.,
$D(Q_{X}\|P_{X})\to0$) implies convergence in optimal transport (i.e.,
$\C(P_{X},Q_{X})\to0$). 
\begin{thm}[Asymptotics of $E_{1}^{(n)}$ for Complete Metrics]
\label{thm:LD-2} Assume that $\mathcal{X}=\mathcal{Y}$ equipped
with a metric $d$ is a Polish metric space, and the cost function
is set to $c=d^{p}$ for $p\ge1$. Then, the following hold. 
\begin{enumerate}
\item For any $\alpha,\tau\ge0$ and any positive integer $n$, $E_{1}^{(n)}(\alpha,\tau)\geq\conv{\phi}(\alpha,\tau).$ 
\item For any $(\alpha,\tau)$ in the interior of $\mathrm{dom}\conv{\phi}$,
it holds that $\lim_{n\to\infty}E_{1}^{(n)}(\alpha,\tau)=\conv{\phi}(\alpha,\tau).$ 
\item Let $(a_{n})$ be a sequence such that $e^{-o(n)}\le a_{n}\le1-e^{-o(n)}$
(and hence $\alpha_{n}=-\frac{1}{n}\log a_{n}\to0$). Then, for any
$\tau$ in the interior of $\mathrm{dom}\conv{\varphi}$, it holds
that 
\begin{equation}
\limsup_{n\to\infty}E_{1}^{(n)}(\alpha_{n},\tau)\le\conv{\varphi}(\tau),\label{eq:-41}
\end{equation}
and under Assumption 2, 
\begin{equation}
\lim_{n\to\infty}E_{1}^{(n)}(\alpha_{n},\tau)=\conv{\varphi}(\tau),\label{eq:-40}
\end{equation}
where $\varphi_{X}$ is defined in \eqref{eq:varphi_X}. In particular,
for the case of $P_{X}=P_{Y}$, $\liminf_{n\to\infty}E_{1}^{(n)}(\alpha_{n},\tau)>0$
holds for all sufficiently small (equivalently for all) $\tau>0$
(i.e., exponential convergence) if and only if Assumption 2 holds. 
\end{enumerate}
\end{thm}
Theorem \ref{thm:LD-2} is a consequence of Theorem \ref{thm:LD}
and proven in Section \ref{sec:Proof-of-Theorem-1-1}. Statement 1
in Theorem \ref{thm:LD-2} is a restatement of Statement 1 in Theorem
\ref{thm:LD} for the case of $c=d^{p}$. Statement 3 is not new;
see Proposition 4.6 and Theorem 5.4 in \cite{gozlan2010transport}.
Statements 2 and 3 in Theorem \ref{thm:LD-2} might be proven alternatively
by the large deviation theorems on the Wassernstein metric in \cite{gozlan2009characterization,wang2010sanov}.
In fact, for this setting of $a=\frac{1}{2}$, Alon, Boppana, and
Spencer in \cite{alon1998asymptotic} provided an alternative expression
for $\lim_{n\to\infty}E_{1}^{(n)}(\alpha_{n},\tau)$ when $\mathcal{X}$
is finite (Assumption 2 automatically is satisfied for this case).
The equivalence between theirs and ours is discussed in details in
Section \ref{subsec:dual}.

By Talagrand's transport inequality, the function $\conv{\phi}$ can
be derived for the case of Gaussian distribution and Euclidean distance. 
\begin{example}[Gaussian Distribution and Euclidean Distance]
 For Gaussian distributions $P_{X}=P_{Y}=\mathcal{N}(0,1)$ and $c(x,y)=(x-y)^{2}$
(with $p=2$), the function 
\[
\conv{\phi}(\alpha,\tau)=\phi(\alpha,\tau)=\begin{cases}
\frac{1}{2}\left(\sqrt{\tau}-\sqrt{2\alpha}\right)^{2}, & \tau>2\alpha\\
0, & \textrm{otherwise}
\end{cases}.
\]
Theorem \ref{thm:LD-2} for this case verifies a consequence of the
Gaussian isoperimetric inequality. 
\end{example}

\subsubsection{\label{subsec:Hamming-Metric}Hamming Metric}

The Hamming metric was one of the metrics first considered in the
field of concentration of measure; see, e.g., \cite{margulis1974probabilistic,Ahls76,Marton86,talagrand1995concentration}.
Note that a countable space with the Hamming metric must be a Polish
metric space, but an uncountable space with the Hamming metric must
not be a Polish metric space. In fact, even so, we next show that
the asymptotics of the concentration exponent in the latter case is
still $\phi(\alpha,\tau)$ for any $\alpha>0,\tau\in(0,1)$. Let $\mathcal{X}=\mathcal{Y}$
be Polish space and let $c$ be the Hamming metric, i.e., $c(x,y)=\bone_{\{x\neq y\}}$.
By the Kantorovich duality, the OT cost in this case is equal to the
TV distance $\|Q_{X}-Q_{Y}\|_{\mathrm{TV}}=\sup_{A}Q_{X}(A)-Q_{Y}(A)$,
with the supremum here attained by $A=\big\{ x:\d Q_{X}/\d R(x)>\d Q_{Y}/\d R(x)\big\}$
where $R$ is an arbitrary probability measure such that $Q_{X},Q_{Y}\ll R$.
Define for $(p,q)\in[0,1]^{2}$, 
\[
\theta(p,q):=\theta_{\alpha,\tau}(p,q):=\inf_{s,t\in[0,1]:D(s\|p)\le\alpha,\,s-t>\tau}D(t\|q).
\]
Here recall that $D(p\|q):=D(\Bern(p)\|\Bern(q))$ for $(p,q)\in[0,1]^{2}$.
For $p\in(0,1)$, denote $s^{*}(p)$ as the solution in $[p,1]$ to
the equation $D(s\|p)=\alpha$ with $s$ unknown; denote $s^{*}(p)=1$
if there is no such solution. For $p\in\{0,1\}$, denote $s^{*}(p)=p$.
It is easy to see that $s^{*}(p)$ is nondecreasing in $p$. Then,
\begin{equation}
\theta(p,q)=\begin{cases}
0 & q\le s^{*}(p)-\tau\\
D(s^{*}(p)-\tau\|q) & q>s^{*}(p)-\tau>0\\
\infty & s^{*}(p)-\tau\le0
\end{cases}.\label{eq:-82}
\end{equation}

\begin{thm}[Asymptotics of $E_{1}^{(n)}$ for Hamming Metric]
\label{thm:LD-3} Assume that $\mathcal{X}=\mathcal{Y}$ is a Polish
space and $c$ is the Hamming metric, i.e., $c(x,y)=\bone_{\{x\neq y\}}$.
The following hold.  
\begin{enumerate}
\item For any $\alpha\ge0,\tau\in[0,1]$, it holds that $E_{1}^{(n)}(\alpha,\tau)\ge\phi(\alpha,\tau)$,
where 
\begin{align}
\phi(\alpha,\tau) & =\inf_{A}\theta_{\alpha,\tau}(P_{X}(A),P_{Y}(A)).\label{eq:-64}
\end{align}
In particular, if $P_{X}$ is finitely-supported or atomless, then
\begin{align}
\phi(\alpha,\tau) & =\inf_{p\in[0,1]:\omega(p)<\infty}\theta_{\alpha,\tau}(p,\omega(p)),\label{eq:-64-1}
\end{align}
where 
\begin{equation}
\omega(p):=\inf_{A:P_{X}(A)=p}P_{Y}(A).\label{eq:omega}
\end{equation}
 
\item For any $\alpha>0,\tau\in(0,1]$, it holds that $\lim_{n\to\infty}E_{1}^{(n)}(\alpha,\tau)=\phi(\alpha,\tau).$ 
\item Let $(a_{n})$ be a sequence such that $e^{-o(n)}\le a_{n}\le1-e^{-o(n)}$
(and hence $\alpha_{n}=-\frac{1}{n}\log a_{n}\to0$). Then, it holds
that for any $\tau\in(0,1]$, 
\[
\lim_{n\to\infty}E_{1}^{(n)}(\alpha_{n},\tau)=\varphi(\tau),
\]
where $\varphi(\tau)=\phi(0,\tau)$. 
\end{enumerate}
\end{thm}
 
\begin{rem}
This theorem implies that for the Hamming metric, the asymptotic concentration
exponent for a pair of arbitrary distributions $(P_{X},P_{Y})$ is
the same as that of $(\Bern(p),\Bern(q))$, some quantized version
of $(P_{X},P_{Y})$. 
\end{rem}
In fact, given an arbitrary $P_{X}$, it can be obtained from \eqref{eq:-64}
that 
\begin{align}
\phi(\alpha,\tau) & \ge\inf_{p\in[0,1]:\omega(p)<\infty}\theta(p,\omega(p))\ge\inf_{p\in[0,1]}\theta(p,\conv\omega(p)).\label{eq:-56-1}
\end{align}
Compared with determining the function $\omega$ itself, it is much
easier to determine $\conv\omega$, since by the Neyman--Pearson
lemma, the graph of $\conv\omega$ coincides with the lower convex
envelope of the curve $\{(P_{X}(A_{r}),P_{Y}(A_{r})):r\ge0\}$, where
$A_{r}:=\big\{ x:\d P_{Y}/\d R(x)\le r\d P_{X}/\d R(x)\big\}$ with
$R$ denoting an arbitrary probability measure such that $P_{X},P_{Y}\ll R$.
Moreover, $\omega$ coincides with $\conv\omega$ if $P_{X}$ is atomless,
and for this case, the lower bound in \eqref{eq:-56-1} is tight,
as shown in \eqref{eq:-64-1}. 

For the case of $P_{X}=P_{Y}$, 
\begin{align}
\phi(\alpha,\tau) & =\inf_{A}\theta(P_{X}(A))\label{eq:-56-2}\\
 & \ge\underline{\phi}(\alpha,\tau):=\inf_{p\in[0,1]}\theta(p),\label{eq:-79}
\end{align}
where 
\begin{align*}
\theta(p) & :=\theta(p,p)\\
 & =\inf_{s,t\in[0,1]:D(s\|p)\le\alpha,\,s-t>\tau}D(t\|p)\\
 & =\begin{cases}
0 & p\le s^{*}(p)-\tau\\
D(s^{*}(p)-\tau\|p) & p>s^{*}(p)-\tau>0\\
\infty & s^{*}(p)-\tau\le0
\end{cases}.
\end{align*}
By the convexity of the relative entropy, it is easy to see that $\underline{\phi}$
is convex. Moreover, the equality in \eqref{eq:-79} holds when $P_{X}=P_{Y}$
is atomless. Hence, for this case, $\conv\phi(\alpha,\tau)=\underline{\phi}(\alpha,\tau)$.
In other words, for any $\alpha>0,\tau\in(0,1)$, all atomless distributions
admit the same smallest asymptotic concentration exponent $\underline{\phi}(\alpha,\tau)$.
The graph of $\underline{\phi}$ is shown in Fig. \ref{fig:concentrationHamming}.
In particular, for the case in Statement 3 of Theorem \ref{thm:LD-3}
with $P_{X}=P_{Y}$, it was shown in \cite{vajda1970note} that $\varphi(\tau)\ge\min_{p\in[\tau,1]}D(p-\tau\|p)$,
with equality if $P_{X}=P_{Y}$ is atomless \cite{berend2014minimum}. 

\begin{figure}
\begin{centering}
\includegraphics[width=0.45\textwidth]{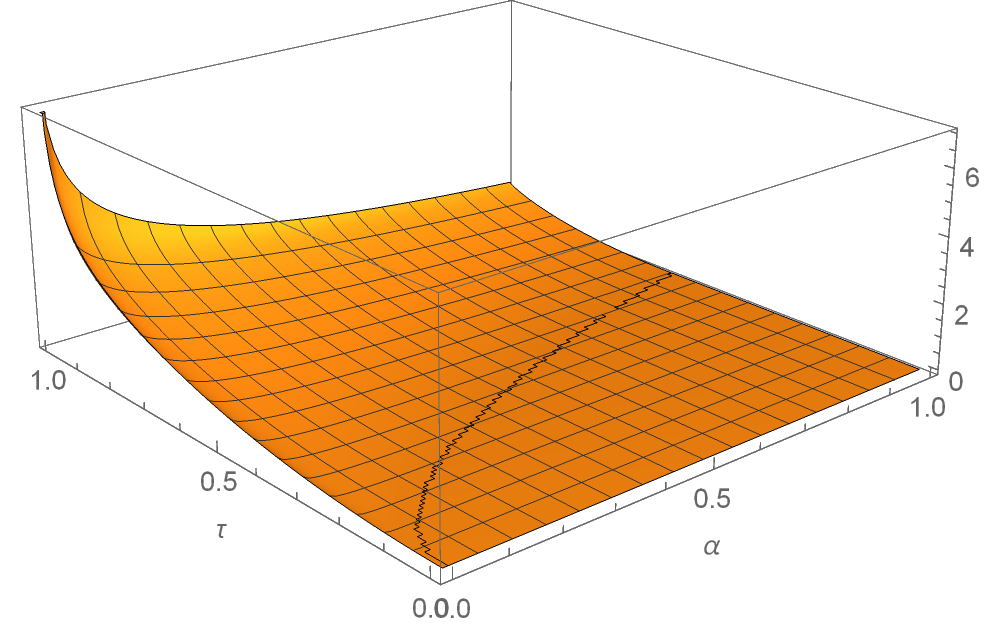} 
\par\end{centering}
\caption{\label{fig:concentrationHamming}Illustration of the function $\underline{\phi}$
which corresponds to the asymptotic concentration exponent of atomless
measures $P_{X}=P_{Y}$ under the Hamming metric. In this graph, the
bases of the logarithm and exponent are changed to $2$. Given each
$\alpha$, the function $\underline{\phi}$ is zero when $\tau$ is
smaller than the value on the black curve. (The curve looks not so
smooth in the figure due to the precision of computation, but should
be smooth in theory.)}
 
\end{figure}

\subsection{\label{subsec:Asymptotic-Isoperimetric-Exponen}Asymptotic Isoperimetric
Exponent }

We next derive the asymptotic expression of $E_{0}^{(n)}(\alpha,\tau)$.
Define 
\begin{align}
\psi(\alpha,\tau) & :=\sup_{Q_{XW}:D(Q_{X|W}\|P_{X}|Q_{W})\le\alpha}\nonumber \\
 & \qquad\inf_{Q_{Y|XW}:\mathbb{E}[c(X,Y)]\le\tau}D(Q_{Y|W}\|P_{Y}|Q_{W}),\label{eq:psi}
\end{align}
with the supremum taken over all $W$ defined on finite alphabets. 
\begin{thm}
\label{thm:The-alphabet-size}The alphabet size of $W$ in \eqref{eq:psi}
can be restricted to be no larger than $2$. 
\end{thm}
We will restate this theorem in Theorem \ref{thm:psi} in Section
\ref{subsec:dual}, and the proof of Theorem \ref{thm:psi} is provided
in Section \ref{sec:dual}. It is worth noting that bounding the alphabet
size of $W$ is not obvious as that for the function $\conv{\phi}$
in \eqref{eq:phi_lce-1}, since the auxiliary random variable $W$
here does no longer play the role of the convex combination in the
lower convex envelope. So, Carathéodory's theorem cannot be applied.
Instead, our proof of Theorem \ref{thm:The-alphabet-size} is based
on the dual expression of $\psi$. 

Based on $\psi$, the asymptotic expression of $E_{0}^{(n)}$ is characterized
in the following theorem. Define the $(\mathcal{X},\epsilon)$-smooth
 cost function w.r.t. $c$ as 
\[
c_{\mathcal{X},\epsilon}(x,y):=\inf_{x':d(x,x')\le\epsilon}c(x',y).
\]
By definition, $c_{\mathcal{X},\epsilon}(x,y)\le c_{\mathcal{X},0}(x,y)=c(x,y)$,
and by the lower semicontinuity of $c$, $\lim_{\epsilon\downarrow0}c_{\mathcal{X},\epsilon}(x,y)\ge c(x,y)$.
So, $\lim_{\epsilon\downarrow0}c_{\mathcal{X},\epsilon}(x,y)=c(x,y)$
pointwise. 

\textbf{Assumption 3} (Uniform Convergence of $(\mathcal{X},\epsilon)$-Smooth
Cost Function): We assume that there is a function $\delta(\epsilon):(0,\infty)\to(0,\infty)$
vanishing as $\epsilon\downarrow0$ such that 
\begin{equation}
c_{\mathcal{X},\epsilon}(x,y)\ge c(x,y)-\delta(\epsilon)\label{eq:-39}
\end{equation}
holds for all $(x,y)$. In other words, $c_{\mathcal{X},\epsilon}(x,y)\to c(x,y)$
as $\epsilon\downarrow0$ uniformly for all $(x,y)$. 

Assumption 3 is automatically satisfied if $\mathcal{X}=\mathcal{Y}$
and $c=d$. Moreover, Assumption 3 is implied by Assumption 1. By
choosing $Q_{X},Q_{X}',Q_{Y}$ as Dirac measures $\delta_{x},\delta_{x'},\delta_{y}$
in Assumption 1 and by the fact that $\Levy(\delta_{x},\delta_{x'})=d(x,x')$
when $d(x,x')\le1$, it is easy to verify that Assumption 3 holds
for this case.  
\begin{thm}[Asymptotics of $E_{0}^{(n)}$]
\label{thm:LD-1}{} Assume that $\mathcal{X}$ and $\mathcal{Y}$
are Polish spaces. Then the following hold. 
\begin{enumerate}
\item Assume that $c(x,y)\le c_{\mathcal{X}}(x)+c_{\mathcal{Y}}(y)$ for
some measurable functions $c_{\mathcal{X}}:\mathcal{X}\to\mathbb{R},\,c_{\mathcal{Y}}:\mathcal{Y}\to\mathbb{R}$.
Assume that $P_{X}$ concentrates on a compact set and $P_{Y}$ satisfies
$\mathbb{E}[\exp(c_{\mathcal{Y}}^{2}(Y))]<\infty$. Then, under Assumption
3, for any $\alpha\ge0,\tau\ge0$, it holds that 
\begin{equation}
\limsup_{n\to\infty}E_{0}^{(n)}(\alpha,\tau)\le\lim_{\tau'\uparrow\tau}\psi(\alpha,\tau').\label{eq:-38-1-1}
\end{equation}
\item If $c$ is bounded and satisfies Assumption 3, then for any $\alpha\ge0,\tau\ge0$,
it holds that  
\begin{equation}
\limsup_{n\to\infty}E_{0}^{(n)}(\alpha,\tau)\le\lim_{\alpha'\downarrow\alpha}\lim_{\tau'\uparrow\tau}\psi(\alpha',\tau').\label{eq:-38}
\end{equation}
\item Under Assumption 1 (given in Section \ref{subsec:General-Cost}),
for any $(\alpha,\tau)$ in the interior of $\mathrm{dom}\psi$, it
holds that 
\[
\liminf_{n\to\infty}E_{0}^{(n)}(\alpha,\tau)\ge\psi(\alpha,\tau).
\]
\item Assume that $\mathcal{X}=\mathcal{Y}$ equipped with a metric $d$
is a Polish metric space, and the cost function is set to $c=d^{p}$
for $p\ge1$.  Then, for any $(\alpha,\tau)$ in the interior of
$\mathrm{dom}\psi$, it holds that 
\[
\liminf_{n\to\infty}E_{0}^{(n)}(\alpha,\tau)\ge\psi(\alpha,\tau).
\]
\end{enumerate}
\end{thm}
\begin{rem}
It is not straightforward to derive upper bound $\lim_{n\to\infty}E_{0}^{(n)}(\alpha,\tau)$
for the case in which the cost is unbounded and $P_{X}$ does not
concentrate on a compact set. One may wonder if it is possible to
generalize the result for the compact $\mathcal{X}$ to the noncompact
(Polish) $\mathcal{X}$ by truncating the noncompact space into a
compact one. In fact, this idea is adopted in the proof of Statement
2 in Theorem \ref{thm:LD-1}; see Section \ref{subsec:Statement-2}.
As shown in this proof, the set $A\subseteq\mathcal{X}^{n}$ is projected
to a space of dimension $n'$  where $n'=(1-\epsilon')n$ for small
$\epsilon'$. Such an idea seems not to work for unbounded costs,
since in this case, the remaining space of dimension $\epsilon n$
cannot be omitted by paying only a finite cost. Another possible way
is to generalize the inherently typical subset lemma \cite{ahlswede1997identification}
to infinite (countably infinite or uncountable) spaces. The continuity
of information quantities in the weak topology is the key point in
the proof of the inherently typical subset lemma \cite{ahlswede1997identification}.
However, it is well known that in an infinite space, convergence in
weak topology does not implies convergence in Shannon information
quantities in general, i.e., Shannon information quantities are discontinuous
\cite{ho2009discontinuity}.  So, certain assumptions must be posed
in this method. 
\end{rem}
\begin{rem}
\label{rem:dimensionfree} In fact, we can obtain the following ``dimension-free''
bound: For arbitrary Polish $\mathcal{X}$ and $\mathcal{Y}$, it
holds that for any $(\alpha,\tau)$, 
\begin{align*}
E_{0}^{(n)}(\alpha,\tau) & \le\lim_{\alpha'\downarrow\alpha}\sup_{\substack{Q_{XW|K}:\\
D(Q_{X|WK}\|P_{X}|Q_{WK})\le\alpha
}
}\\
 & \qquad\inf_{\substack{(\tau_{k})_{k\in[n]},Q_{Y|XWK}:\\
c(X,Y)\le\tau_{K}\textrm{ a.s.}\\
\tau_{k}\ge0,\,\mathbb{E}[\tau_{K}]=\tau
}
}D(Q_{Y|WK}\|P_{Y}|Q_{WK}),
\end{align*}
where $K\sim\Unif[n]$ and there is no restriction on the alphabet
size of $W$. To prove this bound, we redefine $Q_{X^{n}}$ in Step
2 of Section \ref{subsec:Finite} as the uniform distribution on the
set $A$ itself, instead on an inherently typical subset of $A$,
and rechoose $Q_{Y^{n}|X^{n}}$ in Step 3 of Section \ref{subsec:Finite}
as $Q_{Y^{n}|X^{n}}=\prod_{k=1}^{n}Q_{Y_{k}|X^{k}}$ where $Q_{Y_{k}|X^{k}},k\in[n]$
are transition probability measures such that $c(x_{k},Y_{k})\le\tau_{k}$
a.s. under $Q_{Y_{k}|X^{k}=x^{k}}$ for any $x^{k}$. Then, following
the proof steps in Section \ref{subsec:Finite}, the ``dimension-free''
bound is obtained. Note that the inherently typical subset lemma is
not involved here. However, by comparing this bound with the upper
bound in \eqref{eq:-38} (or \eqref{eq:-38-1-1}), it is easy to see
that this ``dimension-free'' bound is not asymptotically tight.
It is not obvious to see whether our bound $\lim_{\alpha'\downarrow\alpha}\lim_{\tau'\uparrow\tau}\psi(\alpha',\tau')$
is a dimension-free bound for $E_{0}^{(n)}(\alpha,\tau)$. If yes,
 finding a proof is an interesting but challenging task. 
\end{rem}

The following is an example that satisfies all the conditions in Statement
1 in Theorem \ref{thm:LD-1}. 
\begin{example}
\label{exa:4}The space $\mathcal{X}=\mathcal{Y}$ equipped with a
metric $d$ is a Polish metric space, and the cost function is set
to $c=d$. Moreover, $P_{X}$ concentrates on a compact set and $P_{Y}$
satisfies $\mathbb{E}[\exp(d^{2}(x_{0},Y))]<\infty$ for some (and
hence all) $x_{0}$. In this case, by the inequality $d(x,y)\le d(x,x_{0})+d(y,x_{0})$,
we can choose $c_{\mathcal{X}}(x)=d(x,x_{0})$ and $c_{\mathcal{Y}}(y)=d(y,x_{0})$. 
\end{example}
Statement 3 in Theorem \ref{thm:LD-1} only requires Assumption 1.
So, Statement 3 in Theorem \ref{thm:LD-1} holds for Examples \ref{exa:1}
and \ref{exa:2} given below Assumption 1. 

 Assumption 3 is satisfied by Example  \ref{exa:2}. So, Statement
2 in Theorem \ref{thm:LD-1} holds for Example  \ref{exa:2}. We
now verify this point. It suffices to consider small enough $\epsilon$
such that $d(x,x')\le\epsilon<d(x,y)$.
\begin{align*}
d^{p}(x',y) & \ge(d(x,y)-d(x',x))^{p}\ge(d(x,y)-\epsilon)^{p}.
\end{align*}
So, 
\begin{align*}
d^{p}(x,y)-d^{p}(x',y) & \le d^{p}(x,y)-(d(x,y)-\epsilon)^{p}.
\end{align*}
Since $t\in[0,d_{\sup}]\mapsto t^{p}$ is continuous and hence uniformly
continuous, there is a function $\delta(\epsilon):(0,\infty)\to(0,\infty)$
 vanishing as $\epsilon\downarrow0$ such that $t^{p}-(t-\epsilon)^{p}\le\delta(\epsilon)$
for all $t\in[\epsilon,M]$. 

If $\psi$ is continuous at $(\alpha,\tau)$, then all the inequalities
in \eqref{eq:-38} turn into equalities. Given $Q_{XW}$, the infimization
in \eqref{eq:-38}, $g(\tau):=\inf_{Q_{Y|XW}:\mathbb{E}[c(X,Y)]\le\tau}D(Q_{Y|W}\|P_{Y}|Q_{W})$,
is convex and nonincreasing in $\tau$, and hence, it is only possible
to be discontinuous at the point $\tau_{0}:=\inf\{\tau:g(\tau)<\infty\}$.
The proof of Theorem \ref{thm:LD-1} is provided in Section \ref{sec:Proof-of-Theorem-2}.
Furthermore, to make it consistent with the expression of $\phi$,
 the infimization in \eqref{eq:psi} can be written as the infimization
over $Q_{Y|W}$ such that $\C(Q_{X|W},Q_{Y|W}|Q_{W})\le\tau$.

Theorem \ref{thm:LD-1} generalizes Ahlswede and Zhang's result \cite{ahlswede1999asymptotical}
from finite spaces to Polish spaces. Similar to Ahlswede and Zhang's,
our proof is also based on the inherently typical subset lemma, but
requires more technical treatments since the spaces are much more
general. Furthermore, previously, there was no bound on the alphabet
size of $W$ in the definition of $\psi$, even for the finite alphabet
case. For the finite alphabet case, Ahlswede and Zhang \cite{ahlswede1997identification,ahlswede1999asymptotical}
showed that 
\[
\psi_{N}(\alpha,\tau)\le\psi(\alpha,\tau)\le\psi_{N}(\alpha,\tau)+O\Big(\frac{\log^{2}N}{N^{1/|\calX|}}\Big),
\]
where $\psi_{N}$ is defined similarly as $\psi$ but with $W$ restricted
to concentrate on the alphabet $\mathcal{W}$ satisfying $|\mathcal{W}|=N$.
Theorem \ref{thm:psi} shows that $\psi(\alpha,\tau)=\psi_{N}(\alpha,\tau)$
for any $N\ge2$, which does not only sharpen Ahlswede and Zhang's
result, but also makes $\psi(\alpha,\tau)$ ``computable'' for the
finite alphabet case in the sense that $\psi(\alpha,\tau)$ can be
evaluated by a finite-dimensional program.

\subsection{\label{subsec:dual}Dual Formulas}

We now provide dual formulas for $\psi$ in \eqref{eq:psi} and variants
of $\phi$ in \eqref{eq:phi} and $\varphi$ in \eqref{eq:varphi}.
Our motivations for this part are two-fold: One is to verify the equivalence
between our formula $\conv{\varphi}_{X}(\tau)$ and Alon, Boppana,
and Spencer's in \cite{alon1998asymptotic} for the asymptotic concentration
exponent; the other is to prove the bound on the alphabet size of
$W$ given in Theorem \ref{thm:The-alphabet-size}. The main tool
used in deriving dual formulas is the Kantorovich duality for the
optimal transport cost and the duality for the I-projection. In the
following, for a measurable function $f:\mathcal{X}\to\mathbb{R}$,
we adopt the notation $P_{X}(f)=\int_{\mathcal{X}}f\ \mathrm{d}P_{X}$.

We define a variant of $\phi$ as for $\alpha\ge0,\tau\ge0$, 
\begin{align}
\phi_{\ge}(\alpha,\tau) & :=\inf_{\substack{Q_{X}\in\mathcal{P}(\mathcal{X}),Q_{Y}\in\mathcal{P}(\mathcal{Y}):\\
D(Q_{X}\|P_{X})\le\alpha,\C(Q_{X},Q_{Y})\ge\tau
}
}D(Q_{Y}\|P_{Y}).\label{eq:phi-1}
\end{align}
Then, $\phi_{\ge}(\alpha,\tau)\le\phi(\alpha,\tau)\le\lim_{\tau'\downarrow\tau}\phi_{\ge}(\alpha,\tau').$
Hence, for all $(\alpha,\tau)$ in the interior of $\mathrm{dom}\conv{\phi}$,
$\conv{\phi}_{\ge}(\alpha,\tau)=\conv{\phi}(\alpha,\tau).$ We next
derive a dual formula for $\phi_{\ge}$. 
\begin{thm}
\label{thm:phi}For all $\alpha\ge0,\tau\ge0$, 
\begin{align}
\phi_{\ge}(\alpha,\tau) & =\inf_{\substack{(f,g)\in C_{\mathrm{b}}(\mathcal{X})\times C_{\mathrm{b}}(\mathcal{Y}):\\
f+g\le c
}
}\sup_{\lambda>0,\eta>0}\lambda\tau-\log P_{Y}(e^{\lambda g})\nonumber \\
 & \qquad\qquad-\eta\alpha-\eta\log P_{X}(e^{\frac{\lambda}{\eta}f}).\label{eq:-48}
\end{align}
Moreover, for all $(\alpha,\tau)$ in the interior of $\mathrm{dom}\conv{\phi}$,
$\conv{\phi}_{\ge}(\alpha,\tau)=\conv{\phi}(\alpha,\tau).$ 
\end{thm}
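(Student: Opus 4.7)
The plan is to reduce $\phi_{\ge}$ to a family of subproblems with linear constraints via Kantorovich duality, and then apply Lagrangian duality together with the Donsker--Varadhan variational formula to each subproblem. By Kantorovich duality for lower semi-continuous cost on Polish spaces,
\begin{equation*}
C(Q_X,Q_Y) = \sup\bigl\{Q_X(f)+Q_Y(g) : (f,g)\in C_{\mathrm{b}}(\mathcal{X})\times C_{\mathrm{b}}(\mathcal{Y}),\ f+g\le c\bigr\}.
\end{equation*}
Since $Q_X(f)+Q_Y(g)\le C(Q_X,Q_Y)$ whenever $f+g\le c$, the nonlinear transport constraint $C(Q_X,Q_Y)\ge\tau$ is equivalent (up to approximation when the Kantorovich supremum is not attained) to the existence of a test pair with $Q_X(f)+Q_Y(g)\ge\tau$. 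Hence
\begin{equation*}
\phi_{\ge}(\alpha,\tau) = \inf_{\substack{(f,g)\in C_{\mathrm{b}}(\mathcal{X})\times C_{\mathrm{b}}(\mathcal{Y})\\ f+g\le c}} \Phi_{f,g}(\alpha,\tau),
\end{equation*}
where $\Phi_{f,g}(\alpha,\tau):=\inf\{D(Q_Y\|P_Y): D(Q_X\|P_X)\le\alpha,\ Q_X(f)+Q_Y(g)\ge\tau\}$.

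For each fixed $(f,g)$ the subproblem $\Phi_{f,g}$ is a convex program with two linear constraints whose Lagrangian decouples across $Q_X$ and $Q_Y$. Introducing multipliers $\lambda>0$ and $\eta>0$ and invoking standard strong duality (justified by convexity of $D(\cdot\|P)$ and a Slater-type condition on the interior of the effective domain), one obtains
\begin{equation*}
\Phi_{f,g}(\alpha,\tau) = \sup_{\lambda,\eta>0}\Bigl\{\lambda\tau - \eta\alpha + \inf_{Q_X}\bigl[\eta D(Q_X\|P_X)-\lambda Q_X(f)\bigr] + \inf_{Q_Y}\bigl[D(Q_Y\|P_Y)-\lambda Q_Y(g)\bigr]\Bigr\}.
\end{equation*}
The Donsker--Varadhan variational formula evaluates each inner infimum as a log-Laplace transform: $\inf_{Q_Y}[D(Q_Y\|P_Y)-\lambda Q_Y(g)] = -\log P_Y(e^{\lambda g})$ and, by positive homogeneity of the entropy, $\inf_{Q_X}[\eta D(Q_X\|P_X)-\lambda Q_X(f)] = -\eta\log P_X(e^{\lambda f/\eta})$. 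Substituting back and composing with the first step yields the claimed identity \eqref{eq:-48}.

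For the auxiliary statement $\breve{\phi}_{\ge}=\breve{\phi}$ on $\mathrm{int}\,\mathrm{dom}\,\breve{\phi}$, note that $\phi_{\ge}(\alpha,\tau)\le\phi(\alpha,\tau)\le\lim_{\tau'\downarrow\tau}\phi_{\ge}(\alpha,\tau')$ follows directly from the definitions, so $\phi_{\ge}$ and $\phi$ differ only at the countably many right-jump points of the monotone function $\tau\mapsto\phi_{\ge}(\alpha,\tau)$. Since a finite convex function on $\mathbb{R}^2$ is continuous on the interior of its effective domain, the lower convex envelopes $\breve{\phi}_{\ge}$ and $\breve{\phi}$ must coincide throughout $\mathrm{int}\,\mathrm{dom}\,\breve{\phi}$. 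The main technical obstacle lies in justifying the strong-duality swap for $\Phi_{f,g}$ uniformly in $(f,g)$---particularly near the boundary of the effective domain where Slater's condition may degenerate---and in carrying out the approximation step in the Kantorovich reduction when the Kantorovich supremum is not attained; both difficulties can be addressed by lower semi-continuity of the functionals involved and a limiting argument in $\tau$.
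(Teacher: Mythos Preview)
Your overall strategy---Kantorovich duality to linearize the transport constraint, followed by entropy/Lagrangian duality and Donsker--Varadhan---is exactly the route the paper takes. The difference lies in how the second step is executed, and this is where your proposal has a genuine soft spot.

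You introduce both multipliers $\lambda,\eta$ at once and appeal to a Slater-type condition to justify strong duality for $\Phi_{f,g}$. As you yourself note at the end, this may degenerate near the boundary of the effective domain, yet the theorem is claimed for \emph{all} $\alpha,\tau\ge 0$. The paper avoids this issue by processing the two constraints sequentially rather than simultaneously. First it applies the unconditional I-projection duality (their Lemma~\ref{lem:DE}, equivalent to Donsker--Varadhan) to the inner $Q_Y$-problem, obtaining
\[
\Phi_{f,g}(\alpha,\tau)=\inf_{Q_X:\,D(Q_X\|P_X)\le\alpha}\ \sup_{\lambda>0}\ \lambda(\tau-Q_X(f))-\log P_Y(e^{\lambda g}).
\]
Now the single required inf--sup swap is justified not by Slater but by a minimax theorem: the objective is affine in $\lambda$ and affine in $Q_X$, and the constraint set $\{Q_X:D(Q_X\|P_X)\le\alpha\}$ is weakly compact (sublevel sets of relative entropy are compact). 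After the swap, the remaining $\inf_{Q_X}$ over a compact divergence ball is handled by the second half of Lemma~\ref{lem:DE}, yielding the $\eta$-expression directly. This sequencing gives the identity unconditionally, with no boundary caveats.

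So your argument is morally correct and matches the paper's, but to close the gap you should replace the Slater appeal by the compactness-based minimax swap: keep the hard constraint $D(Q_X\|P_X)\le\alpha$ in place while dualizing in $\lambda$, swap via Sion/minimax using compactness of the divergence sublevel set, and only then convert the $Q_X$-constraint into the $\eta$-term. Your treatment of $\breve{\phi}_{\ge}=\breve{\phi}$ on the interior is fine and agrees with the paper's one-line remark preceding the theorem.
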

Define a variant of $\varphi$ as 
\begin{equation}
\varphi_{\ge}(\tau):=\phi_{\ge}(0,\tau)=\inf_{Q_{Y}:\C(P_{X},Q_{Y})\ge\tau}D(Q_{Y}\|P_{Y}).\label{eq:varphi-1}
\end{equation}
As a consequence of Theorem \ref{thm:phi}, we have a dual formula
for $\varphi_{\ge}$. 
\begin{cor}
For all $\tau\ge0$, 
\begin{align}
\varphi_{\ge}(\tau) & =\inf_{\substack{(f,g)\in C_{\mathrm{b}}(\mathcal{X})\times C_{\mathrm{b}}(\mathcal{Y}):\\
f+g\le c
}
}\sup_{\lambda\ge0}\lambda(\tau-P_{X}(f))\nonumber \\
 & \qquad\qquad-\log P_{Y}(e^{\lambda g}).\label{eq:-6-2}
\end{align}
Moreover, for all $\tau$ in the interior of $\mathrm{dom}\conv{\varphi}$,
$\conv{\varphi}_{\ge}(\tau)=\conv{\varphi}(\tau).$ 
\end{cor}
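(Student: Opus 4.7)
The plan is to derive \eqref{eq:-6-2} by specializing Theorem \ref{thm:phi} to $\alpha=0$, using the fact that $\varphi_{\ge}(\tau)=\phi_{\ge}(0,\tau)$ by definition. Setting $\alpha=0$ in the dual formula \eqref{eq:-48} leaves
\[
\varphi_{\ge}(\tau)=\inf_{\substack{(f,g)\in C_{\mathrm{b}}(\mathcal{X})\times C_{\mathrm{b}}(\mathcal{Y}):\\f+g\le c}}\sup_{\lambda>0,\,\eta>0}\Bigl[\lambda\tau-\log P_{Y}(e^{\lambda g})-\eta\log P_{X}(e^{\frac{\lambda}{\eta}f})\Bigr],
\]
so the only real work is to eliminate the parameter $\eta$ by taking the supremum explicitly.

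\textbf{Eliminating $\eta$.} Fix $\lambda>0$ and $f\in C_{\mathrm{b}}(\mathcal{X})$. I would show that the map $\eta\mapsto\eta\log P_{X}(e^{\lambda f/\eta})$ is non-increasing on $(0,\infty)$ with $\lim_{\eta\to\infty}\eta\log P_{X}(e^{\lambda f/\eta})=\lambda P_{X}(f)$. Monotonicity is a one-line calculation: with $t:=\lambda/\eta$ and $\psi(t):=\log P_{X}(e^{tf})$,
\[
\frac{d}{d\eta}\bigl[\eta\psi(\lambda/\eta)\bigr]=\psi(t)-t\psi'(t)\le\psi(0)=0
\]
by convexity of $\psi$ together with $\psi(0)=0$. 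The limit follows from Taylor-expanding $e^{\lambda f/\eta}=1+\frac{\lambda}{\eta}f+O(\eta^{-2})$ and using boundedness of $f$ to pass the limit inside the integral. Consequently $\sup_{\eta>0}\bigl[-\eta\log P_{X}(e^{\lambda f/\eta})\bigr]=-\lambda P_{X}(f)$. Substituting back collapses the double supremum into $\sup_{\lambda>0}\bigl[\lambda(\tau-P_{X}(f))-\log P_{Y}(e^{\lambda g})\bigr]$, and extending the range to $\lambda\ge 0$ is harmless since the value at $\lambda=0$ is $0$, which is dominated by (or coincides with) the supremum in all cases.

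\textbf{Second assertion and main obstacle.} The claim $\breve{\varphi}_{\ge}(\tau)=\breve{\varphi}(\tau)$ on the interior of $\mathrm{dom}\,\breve{\varphi}$ follows by the same sandwich $\varphi_{\ge}(\tau)\le\varphi(\tau)\le\lim_{\tau'\downarrow\tau}\varphi_{\ge}(\tau')$ used to prove the analogous equality in Theorem \ref{thm:phi}: since $\varphi_{\ge}$ is non-decreasing, it differs from $\varphi$ only at its (at most countably many) jump points, and a convex envelope is continuous on the interior of its effective domain, so the envelopes agree there. The only non-routine step is the monotonicity/limit analysis of the perspective-type function $\eta\mapsto\eta\log P_{X}(e^{\lambda f/\eta})$; once that is in hand, everything else is formal substitution into Theorem \ref{thm:phi}.
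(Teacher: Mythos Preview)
Your proposal is correct and matches the paper's intended (implicit) derivation: the corollary is stated simply ``as a consequence of Theorem \ref{thm:phi}'' with no separate proof, and what you do---specialize to $\alpha=0$ and eliminate $\eta$---is exactly the missing detail. One minor simplification: rather than analyzing the perspective function $\eta\mapsto\eta\log P_X(e^{\lambda f/\eta})$ by hand, you could note that this is precisely \eqref{eq:-53} in Lemma \ref{lem:DE} at $\alpha=0$ (the left side being $P_X(f)$ since $D(Q\|P)\le 0$ forces $Q=P$), or equivalently set $\alpha=0$ one step earlier in the proof of Theorem \ref{thm:phi}, at \eqref{eq:-44}, where the constraint $D(Q_X\|P_X)\le 0$ directly collapses $Q_X$ to $P_X$ and the $\eta$-optimization never appears.
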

When $P_{X}=P_{Y}$, the function $\varphi_{\ge}$ reduces to the
function $\varphi_{X,\ge}$ defined in \eqref{eq:varphi_X-2}: 
\begin{equation}
\varphi_{X,\ge}(\tau)=\inf_{Q_{X}:\C(P_{X},Q_{X})\ge\tau}D(Q_{X}\|P_{X}).\label{eq:varphi_X-1}
\end{equation}
For this case, we can write $\varphi_{X,\ge}$ as follows. 
\begin{prop}
\label{prop:varphi}When $P_{X}=P_{Y}$ and $c=d$ with $d$ being
a metric, we have for any $0\le\tau<\tau_{\max}$, 
\begin{align}
\varphi_{X,\ge}(\tau) & =\inf_{\textrm{1-Lip }f:P_{X}(f)=0}\sup_{\lambda\ge0}\lambda\tau-\log P_{X}(e^{\lambda f}).\label{eq:-46}
\end{align}
Moreover, for all $\tau$ in the interior of $\mathrm{dom}\conv{\varphi}_{X}$,
$\conv{\varphi}_{X,\ge}(\tau)=\conv{\varphi}_{X}(\tau).$ 
\end{prop}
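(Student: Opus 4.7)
The plan is to combine two classical dualities---Kantorovich--Rubinstein for $C = W_1$ (valid since $c = d$) and the Gibbs / Donsker--Varadhan variational formula for the I-projection onto a half-space---to convert the primal problem \eqref{eq:varphi_X-1} into the minimax form \eqref{eq:-46}.

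First I would invoke Kantorovich--Rubinstein:
\[
C(P_X, Q_X) = \sup\bigl\{Q_X(f) - P_X(f) : f \text{ is } 1\text{-Lipschitz}\bigr\} = \sup\bigl\{Q_X(f) : f \text{ is } 1\text{-Lip},\ P_X(f) = 0\bigr\}.
\]
Thus $C(P_X, Q_X) \ge \tau$ is equivalent (modulo the usual boundary approximation when the KR-supremum is not attained) to the existence of a $1$-Lip $f$ with $P_X(f) = 0$ and $Q_X(f) \ge \tau$. Swapping the resulting two infima gives
\[
\varphi_{X,\ge}(\tau) = \inf_{\substack{1\text{-Lip } f\\ P_X(f) = 0}}\ \inf_{Q_X : Q_X(f) \ge \tau} D(Q_X \| P_X).
\]

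Second, for each such $f$ with $P_X(f) = 0 \le \tau$, the inner problem is the I-projection of $P_X$ onto the half-space $\{Q : Q(f) \ge \tau\}$, and the Gibbs / Donsker--Varadhan identity yields
\[
\inf_{Q_X : Q_X(f) \ge \tau} D(Q_X \| P_X) = \sup_{\lambda \ge 0}\ \lambda \tau - \log P_X(e^{\lambda f}),
\]
where the restriction $\lambda \ge 0$ is forced by $\tau \ge P_X(f) = 0$. Substituting back yields \eqref{eq:-46}. For the ``Moreover'' claim I would mirror the $\phi$-versus-$\phi_\ge$ comparison stated before Theorem \ref{thm:phi}: the inclusion of constraint sets gives $\varphi_{X,\ge}(\tau) \le \varphi_X(\tau) \le \lim_{\tau' \downarrow \tau} \varphi_{X,\ge}(\tau')$, so the two monotone functions agree off the at-most-countable jump set of $\varphi_{X,\ge}$, and hence their lower convex envelopes coincide on the interior of the common effective domain.

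The main obstacle is the technical side of the two dualities on a general Polish space with possibly unbounded $d$: Kantorovich--Rubinstein duality in the ``$1$-Lip'' form requires $P_X$ and $Q_X$ to have finite first moment (automatic once one restricts to $Q_X$ with $D(Q_X \| P_X) < \infty$ provided $P_X$ itself has finite first moment), and the Legendre-transform identity requires $P_X(e^{\lambda f}) < \infty$ for the relevant range of $\lambda$. A standard truncation $f_M = (-M) \vee f \wedge M$ reduces both dualities to the bounded $1$-Lip case; monotone convergence on $\log P_X(e^{\lambda f_M})$ and stability of the KR formula under Lipschitz truncation then recover the unbounded case as $M \to \infty$.
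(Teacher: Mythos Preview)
Your proposal is correct and follows essentially the same route as the paper: Kantorovich--Rubinstein to linearize the constraint, then the I-projection duality (the paper's Lemma~\ref{lem:DE}) to produce the Legendre form; the paper merely unfolds that lemma as an explicit Lagrangian/minimax swap rather than citing it. Your treatment of the ``Moreover'' clause and of the unbounded-$f$ technicalities via truncation is in fact more careful than the paper's proof, which writes the first equality without justifying attainment in the KR supremum and omits the moment/integrability discussion entirely.
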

Based on the dual formula in \eqref{eq:-46}, we next show the equivalence
between our formula $\conv{\varphi}_{X}(\tau)$ and Alon, Boppana,
and Spencer's in \cite{alon1998asymptotic}. When $(\mathcal{X},P_{X})$
and $(\mathcal{Y},P_{Y})$ are the same finite metric probability
space, the cost function $c$ is set to the metric $d$ on this space,
and $a$ is set to $\frac{1}{2}$ (equivalently, $\alpha_{n}=\frac{1}{n}\log2$),
Alon, Boppana, and Spencer in \cite{alon1998asymptotic} proved an
alternative expression for $\lim_{n\to\infty}E_{1}^{(n)}(\alpha_{n},\tau)$
which is 
\[
r(\tau):=\sup_{\lambda\ge0}\lambda\tau-L_{G}(\lambda).
\]
Here $G=(\mathcal{X},d,P_{X})$ denotes the metric probability space
we consider, and $L_{G}(\lambda)$ denotes the maximum of $\log P_{X}(e^{\lambda f})$
over all $1$-Lipschitz functions\footnote{Call $f:\mathcal{X}\to\mathbb{R}$ $1$-Lipschitz if $|f(x)-f(x')|\le d(x,x')$
for all $x,x'\in\mathcal{X}$. } $f:\mathcal{X}\to\mathbb{R}$ with $P_{X}(f)=0$. 
\begin{thm}
\label{thm:equivalence} For a finite metric probability space $G=(\mathcal{X},d,P_{X})$
and all $\tau>0$, $\conv{\varphi}_{X}(\tau)=r(\tau)$. 
\end{thm}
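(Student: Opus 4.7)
The plan is to compute the Legendre--Fenchel conjugate of $\varphi_{X,\ge}$ and recognize $r(\tau)$ as its biconjugate, thereby identifying $r$ with $\breve{\varphi}_{X,\ge}$, and then invoke the equality $\breve{\varphi}_{X,\ge} = \breve{\varphi}_X$ from the Proposition above.

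First, starting from the definition $\varphi_{X,\ge}(\tau) = \inf\{D(Q\|P_X) : C(P_X,Q) \ge \tau\}$, for any $\lambda \ge 0$ I would rewrite the Legendre conjugate as
\[
\varphi_{X,\ge}^{*}(\lambda) := \sup_{\tau \ge 0}[\lambda\tau - \varphi_{X,\ge}(\tau)] = \sup_{Q}[\lambda C(P_X,Q) - D(Q\|P_X)],
\]
by interchanging the nested suprema over $\tau$ and $Q$ and observing that $\sup_{0 \le \tau \le C(P_X,Q)} \lambda\tau = \lambda C(P_X,Q)$ when $\lambda \ge 0$.

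Next, I would invoke the Kantorovich--Rubinstein duality $C(P_X,Q) = \sup_{f\,\text{1-Lip}}[Q(f) - P_X(f)]$ (applicable because $c = d$ is a metric) and exchange the two unconstrained suprema to obtain
\[
\varphi_{X,\ge}^{*}(\lambda) = \sup_{f\,\text{1-Lip}} \sup_{Q}\bigl[\lambda(Q(f) - P_X(f)) - D(Q\|P_X)\bigr] = \sup_{f\,\text{1-Lip}}\bigl[\log P_X(e^{\lambda f}) - \lambda P_X(f)\bigr],
\]
where the inner supremum is evaluated by the Donsker--Varadhan variational formula. Replacing $f$ by $f - P_X(f)$ preserves the 1-Lipschitz property and enforces $P_X(f) = 0$, identifying the right-hand side with $L_G(\lambda)$.

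For $\lambda < 0$, monotonicity and nonnegativity of $\varphi_{X,\ge}$ together with $\varphi_{X,\ge}(0) = 0$ yield $\varphi_{X,\ge}^{*}(\lambda) = 0$ (attained at $\tau = 0$). Applying Fenchel--Moreau biconjugation gives
\[
\breve{\varphi}_{X,\ge}(\tau) = \sup_{\lambda}[\lambda\tau - \varphi_{X,\ge}^{*}(\lambda)] = \max\Bigl(\sup_{\lambda \ge 0}[\lambda\tau - L_G(\lambda)],\, \sup_{\lambda < 0}\lambda\tau\Bigr) = r(\tau)
\]
for $\tau \ge 0$, since $r(\tau) \ge 0 = L_G(0)$ dominates the trivially nonpositive second branch. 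The earlier Proposition then gives $\breve{\varphi}_X(\tau) = \breve{\varphi}_{X,\ge}(\tau)$ on the interior of $\mathrm{dom}\,\breve{\varphi}_X$, completing the equivalence.

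The main subtlety will be justifying that the biconjugate $\varphi_{X,\ge}^{**}$ coincides with the lower convex envelope $\breve{\varphi}_{X,\ge}$ (rather than just its lower semicontinuous hull); in the finite alphabet setting this is automatic on the interior of the effective domain, since $\varphi_{X,\ge}$ is built from finitely many smooth constraints and its convex envelope is already lower semicontinuous there. The remaining ingredients---Kantorovich--Rubinstein duality, Donsker--Varadhan, and the swap of two unconstrained suprema---are classical.
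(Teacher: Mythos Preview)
Your proposal is correct and takes a genuinely different route from the paper. The paper obtains $\breve{\varphi}_{X,\ge}\ge r$ immediately from Proposition~\ref{prop:varphi} (since swapping the inf and sup there yields $r$, and $r$ is convex), but for the reverse inequality it resorts to an explicit product-space construction: given the optimal $\lambda^*$ for $r(\tau)$, it selects two $1$-Lipschitz functions $g,h$ attaining $L_G(\lambda^*)$ whose derivatives bracket $\tau$, builds a $1$-Lipschitz function $f$ on $\mathcal{X}^n$ as a mixture of copies of $g$ and $h$, applies Proposition~\ref{prop:varphi} on $\mathcal{X}^n$ to reach $\varphi_n$, and then invokes the single-letterization machinery of Section~\ref{sec:Proof-of-Theorem} (the chain rules for relative entropy and OT cost) to descend from $\frac{1}{n}\varphi_n(n\hat{\tau})$ to $\breve{\varphi}_{X,\ge}(\hat{\tau})$.

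Your argument sidesteps all of this: by computing the Legendre conjugate $\varphi_{X,\ge}^*(\lambda)=L_G(\lambda)$ directly via Kantorovich--Rubinstein plus Donsker--Varadhan (both suprema being unconstrained, their interchange is free), you obtain $r=\varphi_{X,\ge}^{**}$ in one stroke, and the convexification falls out of Fenchel--Moreau rather than from tensorizing and single-letterizing. This is shorter and more conceptual; the only price is the mild check that $\breve{\varphi}_{X,\ge}$ is lower semicontinuous so that it agrees with the biconjugate, which as you note is automatic on the interior of the domain in the finite-alphabet setting. What the paper's approach buys in exchange is an operational explanation of \emph{why} the convex envelope appears---it is produced by the product structure of the space---and a proof that reuses tools already developed for Theorem~\ref{thm:bound}.
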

Lastly, we provide a dual formula for $\psi$. 
\begin{thm}
\label{thm:psi} For all $\alpha\ge0,\tau\ge0$, 
\begin{align}
\psi(\alpha,\tau) & =\sup_{f_{w}+g_{w}\le c,\forall w\in\{0,1\}}\sup_{\lambda\ge0}\inf_{\eta>0}\max_{w\in\{0,1\}}\eta\alpha\nonumber \\
 & \qquad+\eta\log P_{X}(e^{\frac{\lambda}{\eta}f_{w}})-\lambda\tau-\log P_{Y}(e^{-\lambda g_{w}}),\label{eq:-48-1}
\end{align}
where $(f_{w},g_{w})\in C_{\mathrm{b}}(\mathcal{X})\times C_{\mathrm{b}}(\mathcal{Y}),\forall w$.
Moreover, the alphabet size of $W$ in the definition of $\psi$ (in
\eqref{eq:psi}) can be restricted to be no larger than $2$. 
\end{thm}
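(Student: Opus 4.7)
My plan is to derive the dual identity \eqref{eq:-48-1} by peeling off the two constraints of $\psi$ in succession, with a Lagrange multiplier $\lambda\ge 0$ for the transport cost constraint and $\eta>0$ for the relative entropy constraint, combined with Kantorovich duality for $C$ and the Donsker--Varadhan variational formula for $D(\cdot\|P)$. The alphabet bound $|\mathcal{W}|\le 2$ will then drop out from a Carath\'eodory argument applied once the cost constraint has been absorbed into the objective.

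For the inner infimum in \eqref{eq:psi}, observe that $\mathbb{E}[c(X,Y)]\le\tau$ couples the $W$-slices only through $\sum_w Q_W(w)\tau_w\le\tau$ with $\tau_w:=\mathbb{E}[c(X,Y)\mid W=w]$. Writing $h(t,Q_X):=\inf_{Q_Y:C(Q_X,Q_Y)\le t}D(Q_Y\|P_Y)$, the inner infimum factors as $\inf_{(\tau_w):\sum_w Q_W(w)\tau_w\le\tau}\sum_w Q_W(w)\,h(\tau_w,Q_{X|W=w})$. Joint convexity and monotonicity of $h$ in $t$ (both consequences of convexity of $D$ and $C$ in their second argument) make Lagrangian duality tight, producing $\sup_{\lambda\ge 0}\bigl\{-\lambda\tau+\sum_w Q_W(w)H(\lambda,Q_{X|W=w})\bigr\}$ with $H(\lambda,Q_X):=\inf_{Q_Y}\bigl\{D(Q_Y\|P_Y)+\lambda C(Q_X,Q_Y)\bigr\}$. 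Expanding $C$ by Kantorovich duality, exchanging $\inf_{Q_Y}$ with $\sup_{f+g\le c}$, and invoking Donsker--Varadhan yields $H(\lambda,Q_X)=\sup_{f+g\le c,\,(f,g)\in C_{\mathrm{b}}(\mathcal{X})\times C_{\mathrm{b}}(\mathcal{Y})}\bigl\{\lambda Q_X(f)-\log P_Y(e^{-\lambda g})\bigr\}$.

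For the outer supremum, I would introduce a multiplier $\eta>0$ for $D(Q_{X|W}\|P_X|Q_W)\le\alpha$ and apply Donsker--Varadhan slice-by-slice, which turns $\sup_{Q_{X|W=w}}\{\lambda Q_{X|W=w}(f_w)-\eta D(Q_{X|W=w}\|P_X)\}$ into $\eta\log P_X(e^{\lambda f_w/\eta})$. Sion's theorem then lets me swap $\sup_{Q_W}$ and $\inf_{\eta>0}$---the simplex is compact, the integrand is linear in $Q_W$, and $\eta\mapsto\eta\log P_X(e^{\lambda f_w/\eta})$ is convex as the perspective of the log moment generating function $u\mapsto\log P_X(e^{uf_w})$---after which $\sup_{Q_W}\mathbb{E}_W[\cdot]$ collapses to $\sup_w[\cdot]$, yielding \eqref{eq:-48-1} initially over an arbitrary finite alphabet of $w$. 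To tighten the alphabet to two, I note that after the cost constraint has been absorbed only the single inequality $\sum_w Q_W(w)D(Q_{X|W=w}\|P_X)\le\alpha$ remains; setting $S_\lambda:=\{(D(Q_X\|P_X),H(\lambda,Q_X)):Q_X\in\mathcal{P}(\mathcal{X})\}\subset\mathbb{R}^2$, the inner supremum equals the maximum of the second coordinate over $\mathrm{co}(S_\lambda)\cap\{s^{(1)}\le\alpha\}$, and any such maximizer sits on the one-dimensional upper boundary of $\mathrm{co}(S_\lambda)$ and is therefore a convex combination of at most two extreme points of $S_\lambda$, so two atoms of $W$ suffice.

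The principal technical obstacle will be justifying the inner minimax exchange rigorously on a general Polish $\mathcal{Y}$, where neither the primal $Q_Y$ nor the dual pair $(f,g)\in C_{\mathrm{b}}(\mathcal{X})\times C_{\mathrm{b}}(\mathcal{Y})$ ranges over a compact set, so Sion's theorem does not apply directly. I would bypass this by invoking the classical Kantorovich--Rubinstein duality, which holds for lower semicontinuous $c$ on Polish spaces without any compactness hypothesis, together with the Gibbs variational inequality; both give the required identities directly. The outer swap involves the compact simplex and is routine. Residual care near the boundary of $\mathrm{dom}\,\psi$ and at extremal values of $\lambda,\eta$ (e.g.\ $\eta\uparrow\infty$ when $\alpha=0$) can be handled by the same approximation reasoning used in the proof of Theorem \ref{thm:phi}.
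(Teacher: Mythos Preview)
Your proposal follows essentially the same route as the paper: Lagrangian $\lambda$ for the transport constraint, Kantorovich duality plus Donsker--Varadhan to eliminate $Q_Y$, then a multiplier $\eta$ for the entropy constraint with Donsker--Varadhan again to eliminate $Q_X$, a Sion-type swap of $\sup_{Q_W}$ and $\inf_\eta$, and Carath\'eodory in $\mathbb{R}^2$ for $|\mathcal{W}|\le 2$. The one place you diverge is your ``principal technical obstacle'': you assert that neither $Q_Y$ nor $(f,g)$ ranges over a compact set and propose a workaround, but the paper simply observes that the sublevel sets $\{Q_Y:D(Q_Y\|P_Y)\le M\}$ \emph{are} compact in the weak topology on $\mathcal{P}(\mathcal{Y})$ (a standard fact about relative entropy on Polish spaces), so the general minimax theorem applies directly to the exchange $\inf_{Q_Y}\sup_{f+g\le c}=\sup_{f+g\le c}\inf_{Q_Y}$---no bypass is needed.
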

The second statement of Theorem \ref{thm:psi} is exactly Theorem
\ref{thm:The-alphabet-size}.

\begin{cor}
\label{cor:uppersemicontinuity} For $\alpha>0$, $\lim_{\alpha'\uparrow\alpha}\lim_{\tau'\downarrow\tau}\psi(\alpha',\tau')=\psi(\alpha,\tau).$ 
\end{cor}

\subsection{Applications to Other Problems}

\subsubsection{Strassen's Optimal Transport}

We have characterized or bounded the concentration and isoperimetric
exponents. Our results extend Alon, Boppana, and Spencer's in \cite{alon1998asymptotic},
Gozlan and Léonard's \cite{gozlan2007large}, and Ahlswede and Zhang's
in \cite{ahlswede1999asymptotical}. Furthermore, the concentration
or isoperimetric function is closely related to Strassen's optimal
transport problem, for which we aim at characterizing 
\begin{equation}
\S_{t}^{(n)}(P_{X},P_{Y}):=\min_{P_{X^{n}Y^{n}}\in\cC(P_{X}^{\otimes n},P_{Y}^{\otimes n})}\P\{c_{n}(X^{n},Y^{n})>t\}\label{eq:strassen-1}
\end{equation}
for $t\ge0$. By Strassen's duality \cite{yu2020asymptotics}, 
\begin{align}
\S_{t}^{(n)}(P_{X},P_{Y}) & =\sup_{\textrm{closed }A\subseteq\mathcal{X}}\left\{ P_{X}^{\otimes n}(A)-P_{Y}^{\otimes n}(A^{t})\right\} \label{eq:-20-1}\\
 & =\sup_{a\in[0,1]}\{a-\Gamma^{(n)}(a,t)\}.
\end{align}
Therefore, if $\Gamma^{(n)}(a,t)$ is characterized, then so is $\S_{t}^{(n)}(P_{X},P_{Y})$.
In fact, the asymptotic exponents of $\S_{t}^{(n)}(P_{X},P_{Y})$
were already characterized by the author in \cite{yu2020asymptotics}.
Moreover, it has been shown in \cite{yu2020asymptotics} that it suffices
to restrict $A$ in the supremum in \eqref{eq:-20-1} to be ``exchangeable''
(or ``permutation-invariant''). In other words, $A$ could be specified
by a set $\calA$ of empirical measures in the way $A=\L_{n}^{-1}(\calA)$.
Hence, the supremum in \eqref{eq:-20-1} can be written as an optimization
over empirical measures. From this point, we observe that if $a\mapsto\Gamma^{(n)}(a,t)$
is convex, then computing $\Gamma^{(n)}(a,t)$ for $a\in[0,1]$ is
equivalent to computing $\inf_{\textrm{closed }A\subseteq\mathcal{X}}P_{Y}^{\otimes n}(A^{t})-\lambda P_{X}^{\otimes n}(A)$
for $\lambda\ge0$. Similarly to the argument in \cite{yu2020asymptotics},
the set $A$ in the definition of $\Gamma^{(n)}(a,t)$ (see \eqref{eq:Gamma})
can be also restricted to be ``exchangeable''. In this case, central
limit theorems can be applied to derive the limit of $\Gamma^{(n)}(a,t_{n})$
with $a$ fixed and $t_{n}$ set to a sequence approaching $\C(P_{X},P_{Y})$
in the order of $1/\sqrt{n}$, just like central limit results in
derived in \cite{yu2020asymptotics}.

\subsubsection{Classic Isoperimetric Problem}

The isoperimetric problem considered in Section \ref{subsec:Asymptotic-Isoperimetric-Exponen}
concerns thick boundaries. In contrast, in the classic isoperimetric
problem, the boundary is extremely thin. We assume that $\mathcal{X}=\mathcal{Y}$
equipped with a metric $d$ is a Polish metric space, and moreover,
$P_{X}=P_{Y}=:P$ and $c=d^{p}$ with $p\ge1$. Recall the boundary
measure defined in \eqref{eq:perimeter}. Obviously, the boundary
measure do not change if the metric $d$ is replaced by $d_{s}:=\min\{d,s\}$
for a number $s>0$. So, without loss of generality, we assume that
$d$ is bounded. The boundary measure can be alternatively expressed
as 
\begin{align}
(P^{\otimes n})^{+}(A) & =\liminf_{r\downarrow0}\frac{P^{\otimes n}(A^{r^{p}})-P^{\otimes n}(A)}{\log P^{\otimes n}(A^{r^{p}})-\log P^{\otimes n}(A)}\nonumber \\
 & \qquad\qquad\times\frac{\log P^{\otimes n}(A^{r^{p}})-\log P^{\otimes n}(A)}{r}\\
 & =P^{\otimes n}(A)\liminf_{r\downarrow0}\frac{\log[P^{\otimes n}(A^{r^{p}})/P^{\otimes n}(A)]}{r}\\
 & =n^{1-1/p}P^{\otimes n}(A)\liminf_{r\downarrow0}F_{r}^{(n)}(A),\label{eq:-22}
\end{align}
where
\[
F_{r}^{(n)}(A):=\frac{\frac{1}{n}\log[P^{\otimes n}(A^{nr^{p}})/P^{\otimes n}(A)]}{r}
\]
is the slope of the line through two points at $s=0$ and $s=r$ on
the curve $s\mapsto\frac{1}{n}\log P^{\otimes n}(A^{ns^{p}})$. Note
that $\liminf_{r\downarrow0}F_{r}^{(n)}(A)$ is the lower right-hand
derivative (i.e., the lower Dini derivative) of $s\mapsto\frac{1}{n}\log P^{\otimes n}(A^{ns^{p}})$.

\textbf{ }\textbf{Assumption 4 }(Isoperimetric Stability): (a).
Given $\alpha>0$, there are a sequence of sets $B_{n}\subseteq\mathcal{X}^{n}$
of probability $e^{-n\alpha}$ and a function $\delta:(0,\infty)\times\mathbb{N}\to[0,\infty)$
such that $B_{n}$ minimizes the boundary measure $(P^{\otimes n})^{+}(A)$
over all sets $A$ of probability $e^{-n\alpha}$, $\limsup_{\epsilon\downarrow0}\limsup_{n\to\infty}\delta(\epsilon,n)=0$,
and meanwhile 
\begin{equation}
\liminf_{r\downarrow0}F_{r}^{(n)}(B_{n})\ge F_{\epsilon}^{(n)}(B_{n})-\delta(\epsilon,n),\quad\forall\epsilon>0,n\in\mathbb{N}.\label{eq:-60-2}
\end{equation}
(b). Given $\alpha>0$, there are a family of sets $A_{n,\epsilon}\subseteq\mathcal{X}^{n}$
of probability $e^{-n\alpha}$ and a function $\delta:(0,\infty)\times\mathbb{N}\to[0,\infty)$
such that $A_{n,\epsilon}$ minimizes $P^{\otimes n}(A^{n\epsilon^{p}})$
over all sets $A$ of probability $e^{-n\alpha}$, $\limsup_{\epsilon\downarrow0}\limsup_{n\to\infty}\delta(\epsilon,n)=0$,
and meanwhile 
\begin{equation}
\liminf_{r\downarrow0}F_{r}^{(n)}(A_{n,\epsilon})\le F_{\epsilon}^{(n)}(A_{n,\epsilon})+\delta(\epsilon,n),\quad\forall\epsilon>0,n\in\mathbb{N}.\label{eq:-60-2-1}
\end{equation}

Part (a) of Assumption 4 is true if the probability of the $n^{1/p}\epsilon$-enlargement
of $B_{n}$ under the product metric $c_{n}^{1/p}$ does not change
dramatically as $\epsilon\downarrow0$ for all sufficiently large
$n$. Part (b) is true if $A_{n,\epsilon}$ has a similar property.
 Assumption 4 is satisfied by the tuple of the standard Gaussian
measure, Euclidean distance, and $p=2$. In this case, the Gaussian
isoperimetric inequality states that half-spaces minimizes the Gaussian
boundary measure \cite{sudakov1978extremal,borell1975brunn}. Moreover,
for half-spaces $B_{n}$ of probability $e^{-n\alpha}$, $P^{\otimes n}(B_{n}^{nr^{2}})=\Phi(\Phi^{-1}(e^{-n\alpha})+r\sqrt{n})$
which is log-concave in $r$. Hence, it can be seen that 
\begin{align*}
\lim_{r\downarrow0}F_{r}^{(n)}(B_{n}) & \sim\sqrt{2\alpha},\\
F_{\epsilon}^{(n)}(B_{n}) & \sim\sqrt{2\alpha}-\frac{\epsilon}{2}.
\end{align*}
So, Part (a) of Assumption 4 holds in this case. Note that the Gaussian
isoperimetric inequality also implies that a half-space minimizes
$P^{\otimes n}(A^{nr^{2}})$ over all sets with the probability same
as that of the half-space. So, Part (b) of Assumption 4 follows. 

Define 
\begin{equation}
\xi(\alpha):=\liminf_{r\downarrow0}\frac{\alpha-\lim_{\alpha'\downarrow\alpha}\psi(\alpha',r^{p})}{r},\label{eq:xi}
\end{equation}
where $\psi$ is defined in \eqref{eq:psi} but with both $P_{X}$
and $P_{Y}$ therein set to $P$. 
\begin{thm}[Isoperimetric Inequality]
\label{thm:bound-2-1} Assume that $\mathcal{X}=\mathcal{Y}$ is
a Polish space and the metric $d$ is bounded. Let $\alpha>0$. Then,
under Part (a) of Assumption 4, it holds that for any set $A$ of
probability $e^{-n\alpha}$, 
\begin{align}
(P^{\otimes n})^{+}(A) & \ge n^{1-1/p}e^{-n\alpha}(\xi(\alpha)+o_{n}(1)),\label{eq:-67}
\end{align}
where $o_{n}(1)$ is a term vanishing as $n\to\infty$ which is independent
of $A$, but depends on $(\alpha,p,P)$.  Moreover, under Part (b)
of Assumption 4, if $\alpha\mapsto\psi(\alpha,r^{p})$ is continuous
at $\alpha$ for all sufficiently small $r>0$, then the inequality
in \eqref{eq:-67} is asymptotically sharp in the sense that there
is a sequence of sets $A_{n}\subseteq\mathcal{X}^{n}$ of probability
$e^{-n\alpha}$ such that 
\begin{align*}
(P^{\otimes n})^{+}(A_{n}) & \le n^{1-1/p}e^{-n\alpha}(\xi(\alpha)+o_{n}(1)).
\end{align*}
\end{thm}

\begin{rem}
The ``dimension-free'' bound given in Remark \ref{rem:dimensionfree}
can be used to derive an isoperimetric inequality similar to the one
in \eqref{eq:-67} but without the Assumption 4, which will not be
given here, since this inequality is not expected to be asymptotically
sharp. 
\end{rem}
The proof of this theorem is provided in Section \ref{sec:Proof-of-Theorem}.
 Removing Assumption 4 for the inequality in \eqref{eq:-67}  is
left to be investigated in the future. Furthermore, the equivalence
between the isoperimetric problem with thick boundaries and the one
with thin boundaries under other certain conditions is investigated
by E. Milman \cite{milman2010isoperimetric}. However, E. Milman only
focuses on complete Riemannian manifolds, while our setting concerns
general Polish spaces. 

The inequality in \eqref{eq:-67} can be seen as a generalization
of Gaussian isoperimetric inequality \cite{sudakov1978extremal,borell1975brunn}.
In the setting of the standard Gaussian measure and the Euclidean
distance, 
\begin{align}
(P^{\otimes n})^{+}(A) & \ge\varphi(\Phi^{-1}(e^{-n\alpha}))\sim e^{-n\alpha}\sqrt{2n\alpha},\label{eq:-67-2}
\end{align}
where $\varphi$ is the probability density function of the standard
Gaussian, and the asymptotic equality follows by the fact that $\varphi(\Phi^{-1}(a))\sim a\sqrt{2\log(1/a)}$
as $a\downarrow0$. Half-spaces are exactly optimal in the Gaussian
setting, and intuitively close to optimal in other product probability
measures on Euclidean spaces if the volume is fixed,  which follows
by the functional central limit theorem. In contrast, when the volume
is exponentially small, as indicated by Theorem \ref{thm:bound-2-1},
the empirically typical sets are conjectured to be asymptotically
optimal. 

\section{\label{sec:Proof-of-Theorem-1}Proof of Theorem \ref{thm:LD}}

\subsection{Statement 1}

The proof idea is essentially due to Marton \cite{Marton86,marton1996bounding}.
Our proof relies on the subadditivity of OT costs or the tensorization
of a new kind of transport-entropy inequalities given in \eqref{eq:phi-2},
instead of traditional transport-entropy inequalities. 

Let $A\subseteq\mathcal{X}$ be a measurable subset. Denote $t=n\tau$.
Denote $Q_{X^{n}}=P_{X}^{\otimes n}(\cdot|A)$ and $Q_{Y^{n}}=P_{Y}^{\otimes n}(\cdot|(A^{t})^{c})$.
For two sets $A,B$, denote $c_{n}(A,B)=\inf_{x^{n}\in A,y^{n}\in B}c_{n}(x^{n},y^{n})$.
We first claim that 
\begin{align*}
\C(Q_{X^{n}},Q_{Y^{n}}) & >t.
\end{align*}
We now prove it. If $c_{n}(A,(A^{t})^{c})$ is attained by some pair
$(x^{*n},y^{*n})$, then 
\[
\C(Q_{X^{n}},Q_{Y^{n}})\ge c_{n}(A,(A^{t})^{c})=c_{n}(x^{*n},y^{*n})>t.
\]
We next consider the case that $c_{n}(A,(A^{t})^{c})$ is not attained.
Denote the optimal coupling that attains the infimum in the definition
of $\C(Q_{X^{n}},Q_{Y^{n}})$ as $Q_{X^{n}Y^{n}}$ (the existence
of this coupling is well known). Therefore, 
\[
\C(Q_{X^{n}},Q_{Y^{n}})=\mathbb{E}_{Q}c_{n}(X^{n},Y^{n}).
\]
By definition, $c_{n}(x^{n},y^{n})>t$ for all $x^{n}\in A,y^{n}\in B$.
Since any probability measure on a Polish space is tight, we have
that for any $\epsilon>0$, there exists a compact set $F$ such that
$Q_{X^{n}Y^{n}}(F)>1-\epsilon$. By the lower semi-continuity of $c$
and compactness of $F$, we have that $\inf_{(x^{n},y^{n})\in F}c_{n}(x^{n},y^{n})$
is attained, and hence, $\inf_{(x^{n},y^{n})\in F}c_{n}(x^{n},y^{n})>t$,
i.e., there is some $\delta>0$ such that $c_{n}(x^{n},y^{n})\ge t+\delta$
for all $(x^{n},y^{n})\in F$. This further implies that $\C(Q_{X^{n}},Q_{Y^{n}})\ge(1-\epsilon)(t+\delta)+\epsilon t>t.$
Hence, the claim above is true.

Furthermore, by definition of $Q_{X^{n}},Q_{Y^{n}}$, we then have
\begin{align*}
\frac{1}{n}D(Q_{X^{n}}\|P_{X}^{\otimes n}) & =-\frac{1}{n}\log P_{X}^{\otimes n}(A)\\
\frac{1}{n}D(Q_{Y^{n}}\|P_{Y}^{\otimes n}) & =-\frac{1}{n}\log P_{Y}^{\otimes n}((A^{t})^{c}).
\end{align*}
Therefore, 
\begin{align}
E_{1}^{(n)}(\alpha,\tau) & =-\frac{1}{n}\log\left(1-\inf_{A:P_{X}^{\otimes n}(A)\ge e^{-n\alpha}}P_{Y}^{\otimes n}(A^{t})\right)\nonumber \\
 & \geq\inf_{\substack{Q_{X^{n}},Q_{Y^{n}}:\\
\frac{1}{n}D(Q_{X^{n}}\|P_{X}^{\otimes n})\le\alpha,\\
\frac{1}{n}\C(Q_{X^{n}},Q_{Y^{n}})>\tau
}
}\frac{1}{n}D(Q_{Y^{n}}\|P_{Y}^{\otimes n}).\label{eq:-26}
\end{align}

Note that this lower bound depends on the dimension $n$. We next
single-letterize this bound, i.e., make it independent of $n$. To
this end, we need the chain rule for relative entropies and the chain
rule for OT costs. For relative entropies, we have the chain rule:
\begin{align}
 & D(Q_{X^{n}}\|P_{X}^{\otimes n})=\sum_{k=1}^{n}D(Q_{X_{k}|X^{k-1}}\|P_{X}|Q_{X^{k-1}})\label{eq:}\\
 & D(Q_{Y^{n}}\|P_{Y}^{\otimes n})=\sum_{k=1}^{n}D(Q_{Y_{k}|Y^{k-1}}\|P_{Y}|Q_{Y^{k-1}}).\nonumber 
\end{align}
For OT costs, we have a similar ``chain rule''. 
\begin{lem}[``Subadditivity'' for OT Costs]
\cite[Lemma A.1]{gozlan2017kantorovich} \label{lem:coupling-1}
For any transition probability measures $Q_{X_{i}|X^{i-1}},Q_{Y_{i}|Y^{i-1}},i\in[n]$,
it holds that 
\[
\C(Q_{X^{n}},Q_{Y^{n}})\le\sum_{k=1}^{n}\C(Q_{X_{k}|X^{k-1}},Q_{Y_{k}|Y^{k-1}}|Q_{X^{k-1}},Q_{Y^{k-1}}),
\]
where $Q_{X^{n}}:=\prod_{i=1}^{n}Q_{X_{i}|X^{i-1}},\,Q_{Y^{n}}:=\prod_{i=1}^{n}Q_{Y_{i}|Y^{i-1}}$,
and 
\begin{align}
 & \C(Q_{X_{k}|X^{k-1}},Q_{Y_{k}|Y^{k-1}}|Q_{X^{k-1}},Q_{Y^{k-1}})\nonumber \\
 & :=\sup_{Q_{X^{k-1}Y^{k-1}}\in\cC(Q_{X^{k-1}},Q_{Y^{k-1}})}\nonumber \\
 & \qquad\qquad\C(Q_{X_{k}|X^{k-1}},Q_{Y_{k}|Y^{k-1}}|Q_{X^{k-1}Y^{k-1}}).\label{eq:-35}
\end{align}
\end{lem}
For completeness, we provide the proof of Lemma \ref{lem:coupling-1}
since it is very short. 
\begin{IEEEproof}[Proof of Lemma \ref{lem:coupling-1}]
We need the following lemma on composition of couplings, which is
well-known in OT theory; see the proof in, e.g., \cite[Lemma 9]{YuTan2020_exact}. 
\begin{lem}[Composition of Couplings]
\label{lem:coupling} For any transition probability measures $(P_{X_{i}|X^{i-1}W},P_{Y_{i}|Y^{i-1}W}),i\in[n]$
and any $Q_{X_{i}Y_{i}|X^{i-1}Y^{i-1}W}\in\cC(P_{X_{i}|X^{i-1}W},P_{Y_{i}|Y^{i-1}W}),i\in[n]$,
we have 
\[
\prod_{i=1}^{n}Q_{X_{i}Y_{i}|X^{i-1}Y^{i-1}W}\in\cC\Big(\prod_{i=1}^{n}P_{X_{i}|X^{i-1}W},\prod_{i=1}^{n}P_{Y_{i}|Y^{i-1}W}\Big).
\]
\end{lem}
By the lemma above, we have 
\begin{align}
 & \C(Q_{X^{n}},Q_{Y^{n}})\nonumber \\
 & =\inf_{Q_{X^{n}Y^{n}}\in\cC(Q_{X^{n}},Q_{Y^{n}})}\sum_{k=1}^{n}\mathbb{E}c(X_{k},Y_{k})\nonumber \\
 & \leq\inf_{\substack{Q_{X^{n-1}Y^{n-1}}\in\\
\cC(Q_{X^{n-1}},Q_{Y^{n-1}})
}
}\Bigl[\sum_{k=1}^{n-1}\mathbb{E}c(X_{k},Y_{k})\nonumber \\
 & \qquad+\inf_{\substack{Q_{X_{n}Y_{n}|X^{n-1}Y^{n-1}}\in\\
\cC(Q_{X_{n}|X^{n-1}},Q_{Y_{n}|Y^{n-1}})
}
}\mathbb{E}c(X_{n},Y_{n})\Bigr]\label{eq:-3}\\
 & \leq\inf_{\substack{Q_{X^{n-1}Y^{n-1}}\in\\
\cC(Q_{X^{n-1}},Q_{Y^{n-1}})
}
}\Bigl[\sum_{k=1}^{n-1}\mathbb{E}c(X_{k},Y_{k})\nonumber \\
 & \qquad+\sup_{\substack{Q_{X^{n-1}Y^{n-1}}\in\\
\cC(Q_{X^{n-1}},Q_{Y^{n-1}})
}
}\inf_{\substack{Q_{X_{n}Y_{n}|X^{n-1}Y^{n-1}}\in\\
\cC(Q_{X_{n}|X^{n-1}},Q_{Y_{n}|Y^{n-1}})
}
}\mathbb{E}c(X_{n},Y_{n})\Bigr]\\
 & =\inf_{\substack{Q_{X^{n-1}Y^{n-1}}\in\\
\cC(Q_{X^{n-1}},Q_{Y^{n-1}})
}
}\Bigl[\sum_{k=1}^{n-1}\mathbb{E}c(X_{k},Y_{k})\Bigr]\nonumber \\
 & \qquad+\C(Q_{X_{n}|X^{n-1}},Q_{Y_{n}|Y^{n-1}}|Q_{X^{n-1}},Q_{Y^{n-1}})\\
 & \cdots\cdots\\
 & \leq\sum_{k=1}^{n}\C(Q_{X_{k}|X^{k-1}},Q_{Y_{k}|Y^{k-1}}|Q_{X^{k-1}},Q_{Y^{k-1}}),\label{eq:chainrule_OT}
\end{align}
where in \eqref{eq:-3}, Lemma \ref{lem:coupling} is applied. 
\end{IEEEproof}
We continue the proof of \eqref{eq:bound2}. From \eqref{eq:}, we
know that for any $Q_{X^{n}}$ such that $\frac{1}{n}D(Q_{X^{n}}\|P_{X}^{\otimes n})\le\alpha$,
there must exist nonnegative numbers $(\alpha_{k})$ such that 
\[
D(Q_{X_{k}|X^{k-1}}\|P_{X}|Q_{X^{k-1}})\le\alpha_{k}
\]
and $\frac{1}{n}\sum_{k=1}^{n}\alpha_{k}=\alpha$. Similarly, from
Lemma \ref{lem:coupling-1}, we know that for $(Q_{X^{n}},Q_{Y^{n}})$
such that $\frac{1}{n}\C(Q_{X^{n}},Q_{Y^{n}})>\tau$, there must exist
nonnegative numbers $(\tau_{k})$ such that 
\[
\C(Q_{X_{k}|X^{k-1}},Q_{Y_{k}|Y^{k-1}}|Q_{X^{k-1}},Q_{Y^{k-1}})>\tau_{k}
\]
and $\frac{1}{n}\sum_{k=1}^{n}\tau_{k}=\tau$. These lead to that
for some sequence of nonnegative pairs $\left((\alpha_{k},\tau_{k})\right)$
such that $\frac{1}{n}\sum_{k=1}^{n}\alpha_{k}=\alpha,\frac{1}{n}\sum_{k=1}^{n}\tau_{k}=\tau$,
we have 
\[
E_{1}^{(n)}(\alpha,\tau)\ge\frac{1}{n}\sum_{k=1}^{n}\phi_{k}(\alpha_{k},\tau_{k},Q_{X^{k-1}},Q_{Y^{k-1}}),
\]
where 
\begin{align*}
 & \phi_{k}(\alpha_{k},\tau_{k},Q_{X^{k-1}},Q_{Y^{k-1}})\\
 & :=\underset{\substack{Q_{X_{k}|X^{k-1}},Q_{Y_{k}|Y^{k-1}}:\\
D(Q_{X_{k}|X^{k-1}}\|P_{X}|Q_{X^{k-1}})\le\alpha_{k},\\
\C(Q_{X_{k}|X^{k-1}},Q_{Y_{k}|Y^{k-1}}|Q_{X^{k-1}},Q_{Y^{k-1}})>\tau_{k}
}
}{\qquad\qquad\qquad\inf\;D(Q_{Y_{k}|Y^{k-1}}\|P_{Y}|Q_{Y^{k-1}}).}
\end{align*}

We now simplify the expression of $\phi_{k}(\alpha_{k},\tau_{k},Q_{X^{k-1}},Q_{Y^{k-1}})$.
Note that 
\[
\C(Q_{X_{k}|X^{k-1}},Q_{Y_{k}|Y^{k-1}}|Q_{X^{k-1}},Q_{Y^{k-1}})>\tau_{k}
\]
if and only if there exists a coupling $Q_{X^{k-1}Y^{k-1}}$ of $(Q_{X^{k-1}},Q_{Y^{k-1}})$
such that 
\[
\C(Q_{X_{k}|X^{k-1}},Q_{Y_{k}|Y^{k-1}}|Q_{X^{k-1}Y^{k-1}})>\tau_{k}.
\]
Therefore, 
\begin{align}
 & \phi_{k}(\alpha_{k},\tau_{k},Q_{X^{k-1}},Q_{Y^{k-1}})\nonumber \\
 & =\underset{\substack{Q_{X_{k}|X^{k-1}},Q_{Y_{k}|Y^{k-1}},Q_{X^{k-1}Y^{k-1}}\in\cC(Q_{X^{k-1}},Q_{Y^{k-1}}):\\
D(Q_{X_{k}|X^{k-1}}\|P_{X}|Q_{X^{k-1}})\le\alpha_{k},\\
\C(Q_{X_{k}|X^{k-1}},Q_{Y_{k}|Y^{k-1}}|Q_{X^{k-1}Y^{k-1}})>\tau_{k}
}
}{\qquad\qquad\inf\;D(Q_{Y_{k}|Y^{k-1}}\|P_{Y}|Q_{Y^{k-1}})}\nonumber \\
 & \ge\underset{\substack{Q_{X_{k}|X^{k-1}},Q_{Y_{k}|Y^{k-1}},Q_{X^{k-1}Y^{k-1}}:\\
D(Q_{X_{k}|X^{k-1}Y^{k-1}}\|P_{X}|Q_{X^{k-1}Y^{k-1}})\le\alpha_{k},\\
\C(Q_{X_{k}|X^{k-1}Y^{k-1}},Q_{Y_{k}|X^{k-1}Y^{k-1}}|Q_{X^{k-1}Y^{k-1}})>\tau_{k}
}
}{\qquad\qquad\inf\;D(Q_{Y_{k}|X^{k-1}Y^{k-1}}\|P_{Y}|Q_{X^{k-1}Y^{k-1}})}\label{eq:-1}\\
 & \ge\underset{\substack{Q_{X_{k}|X^{k-1}Y^{k-1}},Q_{Y_{k}|X^{k-1}Y^{k-1}},Q_{X^{k-1}Y^{k-1}}:\\
D(Q_{X_{k}|X^{k-1}Y^{k-1}}\|P_{X}|Q_{X^{k-1}Y^{k-1}})\le\alpha_{k},\\
\C(Q_{X_{k}|X^{k-1}Y^{k-1}},Q_{Y_{k}|X^{k-1}Y^{k-1}}|Q_{X^{k-1}Y^{k-1}})>\tau_{k}
}
}{\qquad\qquad\inf\;D(Q_{Y_{k}|X^{k-1}Y^{k-1}}\|P_{Y}|Q_{X^{k-1}Y^{k-1}})},\label{eq:-2}
\end{align}
where 
\begin{itemize}
\item in \eqref{eq:-1}, we denote 
\begin{align}
Q_{X_{k}|X^{k-1}Y^{k-1}} & =Q_{X_{k}|X^{k-1}},\,\label{eq:-19}\\
Q_{Y_{k}|X^{k-1}Y^{k-1}} & =Q_{Y_{k}|Y^{k-1}},\label{eq:-69}
\end{align}
and at the same time, we relax the coupling $Q_{X^{k-1}Y^{k-1}}$
of $(Q_{X^{k-1}},Q_{Y^{k-1}})$ to any joint distribution; 
\item in \eqref{eq:-2} we optimize over $(Q_{X_{k}|X^{k-1}Y^{k-1}},Q_{Y_{k}|X^{k-1}Y^{k-1}})$
directly, instead over $(Q_{X_{k}|X^{k-1}},Q_{Y_{k}|Y^{k-1}})$. (In
other words, we remove the constraints given in \eqref{eq:-19} and
\eqref{eq:-69} from the optimization in \eqref{eq:-2}.) 
\end{itemize}
Recall the expression of $\conv{\phi}(\alpha,\tau)$ in \eqref{eq:phi_lce-1}.
If we substitute $W\leftarrow(X^{k-1},Y^{k-1}),X\leftarrow X_{k},Y\leftarrow Y_{k}$
into \eqref{eq:-2}, then we obtain the expression in \eqref{eq:phi_lce-1}.
In other words, \eqref{eq:-2} is further lower bounded by $\conv{\phi}(\alpha_{k},\tau_{k})$.
Therefore, 
\begin{align*}
E_{1}^{(n)}(\alpha,\tau) & \ge\frac{1}{n}\sum_{k=1}^{n}\conv{\phi}(\alpha_{k},\tau_{k})\ge\conv{\phi}(\alpha,\tau).
\end{align*}

\begin{rem}
In fact, the single-letterization technique here was also used by
the author in \cite{YuTan2020_exact,yu2021strong_article,yu2023dimension}. 
\end{rem}

\subsection{Statement 2}

From the dimension-free bound in \eqref{eq:bound2}, $\liminf_{n\to\infty}E_{1}^{(n)}(\alpha,\tau)\ge\conv\phi(\alpha,\tau).$
We next prove $\limsup_{n\to\infty}E_{1}^{(n)}(\alpha,\tau)\leq\conv\phi(\alpha,\tau)$
by large deviations theory. Specifically, we choose $A$ and $B$
as conditional empirically typical sets and then analyze their exponents
by apply Sanov's theorem and estimate their distance by definition.

We assume that $\mathcal{W}$ is finite, and without loss of generality,
we assume $\supp(Q_{W})=[m]=\{1,2,\cdots,m\}$. (In fact, by the cardinality
bound for $\conv\phi$, we can assume $m\le3$.) Let $\epsilon>0$.
Let $(Q_{W},Q_{X|W},Q_{Y|W})$ be an optimal pair attaining $\conv\phi(\alpha-\epsilon,\tau+\epsilon)+\epsilon$.
That is, 
\begin{align*}
D(Q_{X|W}\|P_{X}|Q_{W}) & \le\alpha-\epsilon\\
\C(Q_{X|W},Q_{Y|W}|Q_{W}) & >\tau+\epsilon\\
D(Q_{Y|W}\|P_{Y}|Q_{W}) & \le\conv\phi(\alpha-\epsilon,\tau+\epsilon)+\epsilon.
\end{align*}

For each $n$, let $Q_{W}^{(n)}$ be an $n$-type (i.e., the empirical
measure of an $n$-length sequence) such that $\supp(Q_{W}^{(n)})\subseteq[m]$
and $Q_{W}^{(n)}\to Q_{W}$ as $n\to\infty$. Let $Q_{XW}^{(n)}:=Q_{W}^{(n)}Q_{X|W},\,Q_{YW}^{(n)}:=Q_{W}^{(n)}Q_{Y|W}$.
Let $w^{n}=(1,\cdots,1,2,\cdots,2,\cdots,m,\cdots,m)$ be an $n$-length
sequence, where $i$ appears $n_{i}:=nQ_{W}^{(n)}(i)$ times. Hence,
the empirical measure of $w^{n}$ is $Q_{W}^{(n)}$.

We now choose $A$ and $B$ as conditional empirically typical sets.
Specifically, for $\epsilon'>0$, 
\begin{align*}
A & =\mathcal{T}_{\epsilon'}^{(n)}(Q_{X|W}|w^{n})=\L_{n}^{-1}(\mathcal{A}|w^{n})=\prod_{w=1}^{m}\L_{n_{w}}^{-1}(\mathcal{A}_{w}),\\
B & =\mathcal{T}_{\epsilon'}^{(n)}(Q_{Y|W}|w^{n})=\L_{n}^{-1}(\mathcal{B}|w^{n})=\prod_{w=1}^{m}\L_{n_{w}}^{-1}(\mathcal{B}_{w}),
\end{align*}
where $\mathcal{A}_{w}:=B_{\epsilon']}(Q_{X|W=w})$, $\mathcal{B}_{w}:=B_{\epsilon']}(Q_{Y|W=w})$
for $w\in[m]$, $\mathcal{A}=B_{\epsilon']}(Q_{X|W})$, and $\mathcal{B}=B_{\epsilon']}(Q_{Y|W})$.
For each $w$, $\mathcal{A}_{w}$ is closed. Since the empirical measure
map $\L$ is continuous under the weak topology, $\L_{n_{w}}^{-1}(\mathcal{A}_{w})$
is closed in $\mathcal{X}^{n_{w}}$. Therefore, $A$ is closed in
$\mathcal{X}^{n}$. Similarly, $B$ is closed in $\mathcal{Y}^{n}$.

By Sanov's theorem, 
\begin{align}
 & \limsup_{n\to\infty}-\frac{1}{n}\log P_{X}^{\otimes n}(A)\nonumber \\
 & =\sum_{w}Q_{W}(w)\limsup_{n\to\infty}-\frac{1}{n_{w}}\log P_{X}^{\otimes n_{w}}(\L_{n_{w}}^{-1}(\mathcal{A}_{w}))\nonumber \\
 & \leq\sum_{w}Q_{W}(w)\inf_{R_{X}\in\mathcal{A}_{w}^{o}}D(R_{X}\|P_{X})\nonumber \\
 & \le\sum_{w}Q_{W}(w)D(Q_{X|W=w}\|P_{X})\nonumber \\
 & =D(Q_{X|W}\|P_{X}|Q_{W})\nonumber \\
 & \le\alpha-\epsilon.\label{eq:-101}
\end{align}
Hence, $-\frac{1}{n}\log P_{X}^{\otimes n}(A)\le\alpha$ for all sufficiently
large $n$. Similarly, 
\begin{align}
-\frac{1}{n}\log P_{Y}^{\otimes n}(B) & \le D(Q_{Y|W}\|P_{Y}|Q_{W})\nonumber \\
 & \le\conv\phi(\alpha-\epsilon,\tau+\epsilon)+2\epsilon\label{eq:-31-4}
\end{align}
for all sufficiently large $n$.

We next estimate the distance between $A$ and $B$, and show that
\begin{equation}
c_{n}(x^{n},y^{n})>n\tau,\quad\forall x^{n}\in A,y^{n}\in B.\label{eq:-116}
\end{equation}
Observe that for $\L_{x^{n}|w^{n}}\in\mathcal{A},\L_{y^{n}|w^{n}}\in\mathcal{B},$
\begin{align*}
\frac{1}{n}c_{n}(x^{n},y^{n}) & =\mathbb{E}_{\L_{x^{n},y^{n},w^{n}}}c(X,Y)\\
 & \ge\C(\L_{x^{n}|w^{n}},\L_{y^{n}|w^{n}}|\L_{w^{n}})\\
 & \ge\inf_{R_{X|W}\in\mathcal{A},R_{Y|W}\in\mathcal{B}}\C(R_{X|W},R_{Y|W}|\L_{w^{n}})\\
 & =\mathbb{E}_{W\sim\L_{w^{n}}}\inf_{R_{X}\in\mathcal{A}_{W},R_{Y}\in\mathcal{B}_{W}}\C(R_{X},R_{Y})\\
 & \to\mathbb{E}_{W\sim Q_{W}}\inf_{R_{X}\in\mathcal{A}_{W},R_{Y}\in\mathcal{B}_{W}}\C(R_{X},R_{Y})\\
 & =\inf_{R_{X|W}\in\mathcal{A},R_{Y|W}\in\mathcal{B}}\C(R_{X|W},R_{Y|W}|Q_{W})\\
 & =:\eta.
\end{align*}
So, it remains to show that $\eta>\tau.$

By Assumption 1, we obtain that 
\begin{align}
\eta & \geq\inf_{R_{Y|W}\in\mathcal{B}}\C(Q_{X|W},R_{Y|W}|Q_{W})-\delta(\epsilon'),\label{eq:-29-2}
\end{align}
where $\delta(\epsilon')$ is positive and vanishes as $\epsilon'\downarrow0$.

Observe that\footnote{This is because $R_{Y|W}\mapsto\C(Q_{X|W},R_{Y|W}|Q_{W})$ is a convex
combination of lower semi-continuous functions $R_{Y|W=w}\mapsto\C(Q_{X|W=w},R_{Y|W=w})$.
So, $\C(Q_{X|W},R_{Y|W}|Q_{W})$ is lower semi-continuous as well
in $\mathcal{P}(\mathcal{Y})^{m}$ equipped with the product topology.
Hence, its strict superlevel sets are open.} 
\begin{align*}
\mathcal{B}_{0} & :=\{(R_{Y|W=w})_{w\in[m]}\in\mathcal{P}(\mathcal{Y})^{m}:\\
 & \qquad\qquad\C(Q_{X|W},R_{Y|W}|Q_{W})>\tau+\epsilon\}
\end{align*}
is open in $\mathcal{P}(\mathcal{Y})^{m}$ equipped with the product
topology. Since $Q_{Y|W}\in\mathcal{B}_{0}$, $\mathcal{B}_{0}$ contains
the product of $\mathcal{F}_{w},w\in[m]$ for some open sets $\mathcal{F}_{w}\subseteq\mathcal{P}(\mathcal{Y})$
such that $Q_{Y|W=w}\in\mathcal{F}_{w}$. So $\mathcal{B}_{w}\subseteq\mathcal{F}_{w},\forall w$,
for sufficiently small $\epsilon'$ (which was used in the definition
of $\mathcal{B}_{w}$), which means in this case, $\mathcal{B}\subseteq\mathcal{B}_{0}$.
This implies that the RHS of \eqref{eq:-29-2} is further lower bounded
by $\tau+\epsilon-\delta(\epsilon')$. So, if we let $\epsilon>0$
be fixed and $\epsilon'>0$ be sufficiently small such that $\epsilon>\delta(\epsilon')$,
then for sufficiently large $n$, we have \eqref{eq:-116}.

Lastly, combining \eqref{eq:-101}, \eqref{eq:-31-4}, and \eqref{eq:-116}
yields that $\limsup_{n\to\infty}E_{1}^{(n)}(\alpha,\tau)\leq\conv\phi(\alpha-\epsilon,\tau+\epsilon)+\epsilon.$
Since $\conv\phi$ is convex, it is continuous on the interior of
$\mathrm{dom}\conv\phi$. We hence have that for all $\alpha,\tau>0$,
$\limsup_{n\to\infty}E_{1}^{(n)}(\alpha,\tau)\leq\conv\phi(\alpha,\tau).$

\subsection{Statement 3}

The lower bound follows by the dimension-free bound in \eqref{eq:bound2}.
We next prove the upper bound. For this case, we set $\alpha=0$ in
the proof above, and re-choose $(Q_{W},Q_{Y|W})$ as an optimal pair
attaining $\conv\varphi(\tau+\epsilon)+\epsilon$. That is, 
\begin{align*}
\C(P_{X},Q_{Y|W}|Q_{W}) & >\tau+\epsilon\\
D(Q_{Y|W}\|P_{Y}|Q_{W}) & \le\conv\varphi(\tau+\epsilon)+\epsilon.
\end{align*}
On one hand, we choose $\mathcal{A}:=B_{\epsilon']}(P_{X})$ for $\epsilon'>0$,
and $A=\L_{n}^{-1}(\mathcal{A}|w^{n})$. Then, we have

\begin{align*}
 & \limsup_{n\to\infty}-\frac{1}{n}\log(1-P_{X}^{\otimes n}(A))\\
 & \geq\inf_{Q_{X}\in\overline{\mathcal{A}^{c}}}D(Q_{X}\|P_{X})\\
 & \geq\inf_{Q_{X}:\Levy(Q_{X},P_{X})\ge\epsilon'/2}D(Q_{X}\|P_{X})\\
 & \ge\epsilon'^{2}/2,
\end{align*}
where the last inequality follows since $D(Q_{X}\|P_{X})\ge2\Levy(Q_{X},P_{X})^{2}$
(see \eqref{eq:D-TV-L}). Hence, for fixed $\epsilon'>0$, $P_{X}^{\otimes n}(A)\to1$
as $n\to+\infty$ exponentially fast.

On the other hand, we retain the choices of $\mathcal{B}_{w}$ and
$\mathcal{B}$. Similarly to \eqref{eq:-31-4}, we obtain 
\begin{equation}
-\frac{1}{n}\log P_{Y}^{\otimes n}(B)\le\conv\varphi(\tau+\epsilon)+2\epsilon\label{eq:-31-1-2}
\end{equation}
for all sufficiently large $n$.

Similarly to the above, it can be shown that $\frac{1}{n}c_{n}(x^{n},y^{n})>\tau$
for sufficiently large $n$. We hence have that for all $\tau>0$,
$\limsup_{n\to\infty}E_{1}^{(n)}(\alpha_{n},\tau)\leq\conv\varphi(\tau).$

\section{\label{sec:Proof-of-Theorem-1-1}Proof of Theorem \ref{thm:LD-2}}

Statement 1 in Theorem \ref{thm:LD-2} is a restatement of Statement
1 in Theorem \ref{thm:LD} for the case of $c=d^{p}$. We next prove
Statements 2 and 3.

Statement 2 (Case $\alpha>0$): From the dimension-free bound in \eqref{eq:bound2},
$\liminf_{n\to\infty}E_{1}^{(n)}(\alpha,\tau)\ge\conv{\phi}(\alpha,\tau).$
We next prove $\limsup_{n\to\infty}E_{1}^{(n)}(\alpha,\tau)\leq\conv{\phi}(\alpha,\tau).$

Let $s>0$, and $d_{s}:=\min\{d,s\}$. Then, $d_{s}$ is a bounded
metric on $\mathcal{X}$. This is just Example \ref{exa:2} given
below Assumption 1 which satisfies Assumption 1. So, by Theorem \ref{thm:LD},
when we set $c=d_{s}^{p}$, we have $\limsup_{n\to\infty}E_{1,s}^{(n)}(\alpha,\tau)\leq\conv{\phi}_{s}(\alpha,\tau)$,
where $E_{1,s}^{(n)}(\alpha,\tau)$ is the quantity $E_{1}^{(n)}(\alpha,\tau)$
given in \eqref{eq:E1} but defined for $c=d_{s}^{p}$, and similarly,
$\conv{\phi}_{s}(\alpha,\tau)$ is the $\conv{\phi}(\alpha,\tau)$
defined for $c=d_{s}^{p}$. Explicitly, 
\begin{equation}
\conv{\phi}_{s}(\alpha,\tau)=\inf_{\substack{Q_{X|W},Q_{Y|W},Q_{W}:\\
D(Q_{X|W}\|P_{X}|Q_{W})\le\alpha,\\
\C_{s}(Q_{X|W},Q_{Y|W}|Q_{W})>\tau
}
}D(Q_{Y|W}\|P_{Y}|Q_{W})\label{eq:phi_lce-1-2}
\end{equation}
where $\C_{s}(Q_{X|W},Q_{Y|W}|Q_{W})$ is the OT cost for $c=d_{s}^{p}$.

Observe that for the same $A$, 
\begin{align*}
A^{t} & =\bigcup_{x^{n}\in A}\{y^{n}\in\mathcal{Y}^{n}:\sum_{i=1}^{n}d^{p}(x_{i},y_{i})\leq t\}\\
 & \subseteq\bigcup_{x^{n}\in A}\{y^{n}\in\mathcal{Y}^{n}:\sum_{i=1}^{n}d_{s}^{p}(x_{i},y_{i})\leq t\}=:A_{s}^{t}
\end{align*}
So, $E_{1}^{(n)}(\alpha,\tau)\le E_{1,s}^{(n)}(\alpha,\tau).$ Hence,
$\limsup_{n\to\infty}E_{1}^{(n)}(\alpha,\tau)\leq\conv{\phi}_{s}(\alpha,\tau)$.
Taking limit as $s\to\infty$, we obtain $\limsup_{n\to\infty}E_{1}^{(n)}(\alpha,\tau)\leq\lim_{s\to\infty}\conv{\phi}_{s}(\alpha,\tau)$.
To prove Statement 2, it suffices to show that $\lim_{s\to\infty}\conv{\phi}_{s}(\alpha,\tau)=\conv{\phi}(\alpha,\tau)$
for $\alpha,\tau>0$. On one hand, $\conv{\phi}_{s}(\alpha,\tau)\ge\conv{\phi}(\alpha,\tau)$
since $\C_{s}(Q_{X|W},Q_{Y|W}|Q_{W})\le\C(Q_{X|W},Q_{Y|W}|Q_{W})$.
So, it suffices to prove $\lim_{s\to\infty}\conv{\phi}_{s}(\alpha,\tau)\le\conv{\phi}(\alpha,\tau)$
for $\alpha,\tau>0$.

Let $\epsilon>0$. Let $(Q_{W},Q_{X|W},Q_{Y|W})$ $\epsilon$-approximately
attain $\conv{\phi}(\alpha,\tau)$ in the sense that 
\begin{align*}
D(Q_{X|W}\|P_{X}|Q_{W}) & \le\alpha\\
\C(Q_{X|W},Q_{Y|W}|Q_{W}) & >\tau\\
D(Q_{Y|W}\|P_{Y}|Q_{W}) & \le\conv{\phi}(\alpha,\tau)+\epsilon.
\end{align*}

\begin{lem}
Given $(Q_{X},Q_{Y})$, 
\[
\lim_{s\to\infty}\C_{s}(Q_{X},Q_{Y})=\C(Q_{X},Q_{Y}).
\]
\end{lem}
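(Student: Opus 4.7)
The plan is to sandwich $C_s(Q_X,Q_Y)$ between two quantities that converge to $C(Q_X,Q_Y)$. The easy direction is the upper bound: since $d_s=\min\{d,s\}\le d$, we have $d_s^p\le d^p$ pointwise, hence $C_s(Q_X,Q_Y)\le C(Q_X,Q_Y)$ for every $s>0$. In particular $\limsup_{s\to\infty}C_s(Q_X,Q_Y)\le C(Q_X,Q_Y)$, and we also note monotonicity: $s\mapsto C_s(Q_X,Q_Y)$ is nondecreasing because $s\mapsto d_s^p$ is.

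The main work is the reverse inequality $\liminf_{s\to\infty}C_s(Q_X,Q_Y)\ge C(Q_X,Q_Y)$. For each $s>0$, let $\pi_s\in\Pi(Q_X,Q_Y)$ be an optimal coupling for $C_s$ (existence follows from the boundedness and continuity of $d_s^p$, as noted under \eqref{eq:OT}). The family $\Pi(Q_X,Q_Y)$ is tight, since the marginals $Q_X,Q_Y$ are individually tight on the Polish space, so by Prokhorov's theorem we can extract a subsequence $\pi_{s_k}$ converging weakly to some $\pi\in\Pi(Q_X,Q_Y)$.

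Now I would exploit two facts simultaneously. First, for any fixed auxiliary $s'>0$, the function $d_{s'}^p$ is bounded and continuous on $\mathcal{X}\times\mathcal{X}$, so weak convergence gives $\int d_{s'}^p\,\mathrm{d}\pi_{s_k}\to\int d_{s'}^p\,\mathrm{d}\pi$. Second, once $s_k\ge s'$ we have $d_{s_k}^p\ge d_{s'}^p$ pointwise, hence
\[
\int d_{s'}^p\,\mathrm{d}\pi \;=\;\lim_{k\to\infty}\int d_{s'}^p\,\mathrm{d}\pi_{s_k}\;\le\;\liminf_{k\to\infty}\int d_{s_k}^p\,\mathrm{d}\pi_{s_k}\;=\;\liminf_{k\to\infty}C_{s_k}(Q_X,Q_Y).
\]
Letting $s'\to\infty$ and using monotone convergence ($d_{s'}^p\uparrow d^p$) on the left yields $\int d^p\,\mathrm{d}\pi\le\liminf_k C_{s_k}(Q_X,Q_Y)$. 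Since $\pi\in\Pi(Q_X,Q_Y)$, the left side is at least $C(Q_X,Q_Y)$. Combined with the monotonicity of $s\mapsto C_s(Q_X,Q_Y)$, this forces the full limit to exist and equal $C(Q_X,Q_Y)$.

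The only subtle point to handle is the case $C(Q_X,Q_Y)=+\infty$, but the same argument works: every coupling in $\Pi(Q_X,Q_Y)$ then has infinite $d^p$-cost, so the weak-limit $\pi$ satisfies $\int d^p\,\mathrm{d}\pi=\infty$, and the chain of inequalities above forces $C_s(Q_X,Q_Y)\to\infty$. The main obstacle to watch for is a measurability/integrability subtlety in the Fatou-type swap, but because $d_{s'}^p$ is bounded continuous for each $s'$ and we only take the unbounded limit after passing the inequality, everything remains rigorous.
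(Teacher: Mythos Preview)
Your proof is correct, but it follows a different route from the paper's. The paper argues on the dual side: it invokes Kantorovich duality to write $C(Q_X,Q_Y)=\sup\{Q_X(f)+Q_Y(g)\}$ over bounded continuous $(f,g)$ with $f+g\le d^p$, picks an $\epsilon$-optimal pair $(f^*,g^*)$, and observes that because $f^*,g^*$ are bounded, $f^*+g^*\le d_s^p$ holds once $s$ is large enough; hence $(f^*,g^*)$ is feasible for the dual of $C_s$ and $C_s\ge C-\epsilon$. Your argument stays entirely on the primal side: you extract a weakly convergent subsequence of optimal couplings $\pi_{s_k}$ via Prokhorov, then combine weak convergence against the bounded continuous test functions $d_{s'}^p$ with monotone convergence in $s'$ to pass to the limit. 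The paper's approach is shorter and exploits a structural feature of the Kantorovich dual (bounded potentials automatically respect truncation for large $s$); your approach avoids duality altogether and is closer in spirit to a direct lower-semicontinuity argument for the transport cost. Both are standard, and your treatment of the $C=+\infty$ case is a nice addition that the paper leaves implicit.
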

\begin{proof} Obviously, $\C_{s}(Q_{X},Q_{Y})\le\C(Q_{X},Q_{Y}).$
Hence, $\lim_{s\to\infty}\C_{s}(Q_{X},Q_{Y})\le\C(Q_{X},Q_{Y}).$

By Kantorovich duality \cite[Theorem 5.10]{villani2008optimal} (also
given in Lemma \ref{lem:Kantorovich}), 
\begin{align}
\C(Q_{X},Q_{Y}) & =\sup_{\substack{(f,g)\in C_{\mathrm{b}}(\mathcal{X})\times C_{\mathrm{b}}(\mathcal{Y}):\\
f+g\leq c
}
}\int_{\mathcal{X}}f\ \mathrm{d}Q_{X}+\int_{\mathcal{Y}}g\ \mathrm{d}Q_{Y}\label{eq:-111}
\end{align}
where $C_{\mathrm{b}}(\mathcal{X})$ denotes the collection of bounded
continuous functions $f:\mathcal{X}\to\mathbb{R}$. Given $\epsilon>0$,
let $(f^{*},g^{*})\in C_{\mathrm{b}}(\mathcal{X})\times C_{\mathrm{b}}(\mathcal{Y})$
$\epsilon$-approximately attain the supremum above in the sense that
\begin{align*}
f^{*}+g^{*} & \leq c\\
\int_{\mathcal{X}}f^{*}\ \mathrm{d}Q_{X}+\int_{\mathcal{Y}}g^{*}\ \mathrm{d}Q_{Y} & \ge\C(Q_{X},Q_{Y})-\epsilon.
\end{align*}
Then, by the boundness, $f^{*}+g^{*}\leq c_{s}$ for all sufficiently
large $s$. By Kantorovich duality again, 
\begin{align}
\C_{s}(Q_{X},Q_{Y}) & =\sup_{\substack{(f,g)\in C_{\mathrm{b}}(\mathcal{X})\times C_{\mathrm{b}}(\mathcal{Y}):\\
f+g\leq c_{s}
}
}\int_{\mathcal{X}}f\ \mathrm{d}Q_{X}+\int_{\mathcal{Y}}g\ \mathrm{d}Q_{Y}.\label{eq:-111-1}
\end{align}
For sufficiently large $s$, $(f^{*},g^{*})$ is a feasible solution
to \eqref{eq:-111-1}. Hence, 
\[
\C_{s}(Q_{X},Q_{Y})\ge\int_{\mathcal{X}}f^{*}\ \mathrm{d}Q_{X}+\int_{\mathcal{Y}}g^{*}\ \mathrm{d}Q_{Y}\ge\C(Q_{X},Q_{Y})-\epsilon.
\]
Since $\epsilon>0$ is arbitrary, $\lim_{s\to\infty}\C_{s}(Q_{X},Q_{Y})\ge\C(Q_{X},Q_{Y})$,
completing the proof. \end{proof} Since by definition, the conditional
OT cost is the weighted sum of the unconditional version, given $\left(Q_{W},Q_{X|W},Q_{Y|W}\right)$,
we immediately have 
\[
\lim_{s\to\infty}\C_{s}(Q_{X|W},Q_{Y|W}|Q_{W})=\C(Q_{X|W},Q_{Y|W}|Q_{W})>\tau.
\]
So, for sufficiently large $s$, $\C_{s}(Q_{X|W},Q_{Y|W}|Q_{W})>\tau$
which means that $\left(Q_{W},Q_{X|W},Q_{Y|W}\right)$ is a feasible
solution to the infimization in \eqref{eq:phi_lce-1-2} with $\alpha$
substituted by $\alpha-\epsilon$. Therefore, 
\[
\lim_{s\to\infty}\conv{\phi}_{s}(\alpha,\tau)\le D(Q_{Y|W}\|P_{Y}|Q_{W})\le\conv{\phi}(\alpha,\tau)+\epsilon.
\]
Letting $\epsilon\downarrow0$, we obtain $\lim_{s\to\infty}\conv{\phi}_{s}(\alpha,\tau)\le\conv{\phi}(\alpha,\tau).$
This completes the proof.

Statement 3 (Case $\alpha_{n}\to0$): The proof for the upper bound
is similar to the above for Statement 2, and hence is omitted here.

We next prove $\liminf_{n\to\infty}E_{1}^{(n)}(\alpha_{n},\tau)\ge\conv\varphi_{X}(\tau)$.
The proof is essentially same as Marton's in \cite{Marton86} or Gozlan's
in \cite{gozlan2005principe}. From the dimension-free bound in \ref{thm:LD},
we have for fixed $\tau$, $E_{1}^{(n)}(\alpha_{n},\tau)\geq\conv\phi(\alpha_{n},\tau).$
Under the condition $D(Q_{X|W}\|P_{X}|Q_{W})\le\alpha_{n}$, we have
\[
\C(Q_{X|W}\|P_{X}|Q_{W})\le\conc\kappa_{X}(\alpha_{n}),
\]
where $\conc\kappa_{X}$ is the upper concave envelope of 
\begin{equation}
\kappa_{X}(\alpha):=\sup_{Q_{X}:D(Q_{X}\|P_{X})<\alpha}\C(P_{X},Q_{X}).\label{eq:-32-2-1}
\end{equation}
The generalized inverse of $\conv{\varphi}_{X}$ is for $\alpha\ge0$,
\begin{align}
\conv{\varphi}_{X}^{-}(\alpha) & :=\inf\left\{ \tau\ge0:\varphi_{X}(\tau)\ge\alpha\right\} \\
 & =\conc\kappa_{X}(\alpha).
\end{align}

By Assumption 2, $\conc\kappa_{X}(\alpha)\to0$ as $\alpha\to0$.
By the triangle inequality (since for this case, $\C^{1/p}(\cdot,\cdot)$
is a Wasserstein metric), we then have that for $(Q_{X|W},Q_{Y|W},Q_{W})$
satisfying the constraints in \eqref{eq:phi_lce-1}, 
\begin{align*}
 & \C^{1/p}(P_{X},Q_{Y|W}|Q_{W})\\
 & \ge\C^{1/p}(Q_{X|W},Q_{Y|W}|Q_{W})-\C^{1/p}(Q_{X|W},P_{X}|Q_{W})\\
 & >\tau^{1/p}-\conc\kappa_{X}(\alpha_{n})^{1/p}.
\end{align*}
We finally obtain 
\[
\conv\phi(\alpha_{n},\tau)\ge\conv\varphi\big((\tau^{1/p}-\conc\kappa_{X}(\alpha_{n})^{1/p})^{p}\big).
\]
Letting $n\to\infty$, $\liminf_{n\to\infty}\conv\phi(\alpha_{n},\tau)\ge\conv\varphi(\tau)$
for $\tau>0$. Hence, $\liminf_{n\to\infty}E_{1}^{(n)}(\alpha_{n},\tau)\ge\conv\varphi(\tau).$

\section{\label{sec:Proof-of-Theorem-1-1-1}Proof of Theorem \ref{thm:LD-3}}

Statement 1: Observe that given any $A$, by the DPI, it holds that
$D(Q_{X}\|P_{X})\ge D(Q_{X}(A)\|P_{X}(A))$ and $D(Q_{Y}\|P_{Y})\ge D(Q_{Y}(A)\|P_{Y}(A))$.
Therefore, for $\alpha\ge0,\tau\in[0,1]$, it holds that 
\begin{align}
\phi(\alpha,\tau) & =\inf_{A}\inf_{\substack{Q_{X}(A),Q_{Y}(A):\\
D(Q_{X}(A)\|P_{X}(A))\le\alpha,\\
Q_{X}(A)-Q_{Y}(A)>\tau
}
}D(Q_{Y}(A)\|P_{Y}(A))\label{eq:phi-4}\\
 & =\inf_{A}\theta_{\alpha,\tau}(P_{X}(A),P_{Y}(A)).
\end{align}
That is, for the Hamming metric, \eqref{eq:-64} holds. By \eqref{eq:bound2},
it holds that $E_{1}^{(n)}(\alpha,\tau)\ge\phi(\alpha,\tau)$. 

If $P_{X}$ is finitely-supported or atomless, then the infimum in
\eqref{eq:omega} is attained. This is because, for the finitely-supported
case, $A$ in \eqref{eq:omega} can be restricted to be a subset of
the support of $P_{X}$ (which is a finite set); for the atomless
case, by the Neyman-Pearson lemma, the optimal set $A$ in \eqref{eq:omega}
is any set such that $P_{X}(A)=p$ and 
\begin{align*}
 & \big\{ x:\d P_{Y}/\d R(x)<r\d P_{X}/\d R(x)\big\}\\
 & \subseteq A\\
 & \subseteq\big\{ x:\d P_{Y}/\d R(x)\le r\d P_{X}/\d R(x)\big\}
\end{align*}
for some $r\ge0$, where $R$ is an arbitrary probability measure
such that $P_{X},P_{Y}\ll R$. For finitely-supported or atomless
$P_{X}$, 
\begin{align}
\phi(\alpha,\tau) & =\inf_{A}\theta_{\alpha,\tau}(P_{X}(A),P_{Y}(A))\label{eq:-64-2}\\
 & =\inf_{p\in[0,1]}\inf_{A:P_{X}(A)=p}\theta_{\alpha,\tau}(P_{X}(A),P_{Y}(A))\\
 & =\inf_{p\in[0,1]}\inf_{A:P_{X}(A)=p}\theta_{\alpha,\tau}(p,\omega(p))\label{eq:-146}\\
 & =\inf_{p\in[0,1]:\omega(p)<\infty}\theta_{\alpha,\tau}(p,\omega(p)),
\end{align}
where \eqref{eq:-146} follows since on one hand, from \eqref{eq:-82},
it is observed that given $p$, $\theta_{\alpha,\tau}(p,q)$ is nondecreasing
in $q$, and on the other hand, the infimum in \eqref{eq:omega} is
attained. 

Statement 2: By Statement 1, if $\phi(\alpha,\tau)=\infty$, then
$E_{1}^{(n)}(\alpha,\tau)=\infty$. So, it suffices to consider the
case $\phi(\alpha,\tau)<\infty$. Moreover, if $\tau=1$, then for
any nonempty set $A\subseteq\mathcal{X}^{n}$, its $n$-enlargement
is always $\mathcal{X}^{n}$. So, $\Gamma^{(n)}(a,n)=1$ for any $a>0$,
and hence $E_{1}^{(n)}(\alpha,1)=\infty$ for any finite $\alpha$,
which coincides with $\phi(\alpha,1)=1$. It remains to consider the
case of $\alpha>0,\tau\in(0,1)$ and $\phi(\alpha,\tau)<\infty$. 

Given $(\alpha,\tau)$ and $\epsilon>0$, let $A$ be a set that $\epsilon$-approximately
attains the infimum in \eqref{eq:-64}. That is, $A$ satisfies 
\begin{equation}
\theta_{\alpha,\tau}(p,q)\le\phi(\alpha,\tau)+\epsilon,\label{eq:-4}
\end{equation}
where $p:=P_{X}(A)$, $q:=P_{Y}(A)$. We partition $\mathcal{X}$
into $\{A,A^{c}\}$. Let $X\sim P_{X},Y\sim P_{Y}$. Denote $I=1$
if $X\in A$; $I=0$ otherwise. Denote $J=1$ if $Y\in A$; $J=0$
otherwise. So, $I$ and $J$ are random variables, whose distributions
are given by $P_{I}=\Bern(p)$ and $P_{J}=\Bern(q)$. Define the isoperimetric
function for $P_{I}$, $P_{J}$, and Hamming metric as for $a\in[0,1],t\ge0$,
\begin{equation}
\Gamma_{\mathrm{b}}^{(n)}(a,t):=\inf_{B\subseteq\mathbb{N}^{n}:P_{I}^{\otimes n}(B)\ge a}P_{J}^{\otimes n}(B^{t}),\label{eq:Gamma-4-2}
\end{equation}
and concentration exponent as for $\alpha\ge0$, 
\begin{align}
E_{\mathrm{b},1}^{(n)}(\alpha,\tau) & :=-\frac{1}{n}\log(1-\Gamma_{\mathrm{b}}^{(n)}(e^{-n\alpha},n\tau)).\label{eq:E1-1}
\end{align}
Here the subscript ``$\mathrm{b}$'' denotes ``Bernoulli'' or
``binary''. By definition, $\Gamma^{(n)}(a,t)\le\Gamma_{\mathrm{b}}^{(n)}(a,t)$,
and hence, $E_{1}^{(n)}(\alpha,\tau)\le E_{\mathrm{b},1}^{(n)}(\alpha,\tau)$. 

Since the space $\{0,1\}$ with the Hamming metric satisfies Example
\ref{exa:1} given below Assumption 1, by Theorem \ref{thm:LD}, for
distributions $P_{I},P_{J}$, and any $(\alpha,\tau)$ in the interior
of $\mathrm{dom}\conv\phi_{\mathrm{b}}$, it holds that $\lim_{n\to\infty}E_{\mathrm{b},1}^{(n)}(\alpha,\tau)=\conv\phi_{\mathrm{b}}(\alpha,\tau)$,
where 
\[
\phi_{\mathrm{b}}(\alpha,\tau):=\inf_{s,t\in[0,1]:D(s\|p)\le\alpha,\,|s-t|>\tau}D(t\|q).
\]
By definition, $\phi_{\mathrm{b}}(\alpha,\tau)\le\theta_{\alpha,\tau}(p,q)$.
Combining it with $\theta_{\alpha,\tau}(p,q)\le\phi(\alpha,\tau)+\epsilon$
and $E_{1}^{(n)}(\alpha,\tau)\le E_{\mathrm{b},1}^{(n)}(\alpha,\tau)$
yields $\limsup_{n\to\infty}E_{1}^{(n)}(\alpha,\tau)\le\phi_{\mathrm{b}}(\alpha,\tau)+\epsilon$.
Since $\epsilon>0$ is arbitrary, it holds that $\limsup_{n\to\infty}E_{1}^{(n)}(\alpha,\tau)\le\phi_{\mathrm{b}}(\alpha,\tau)$,
which combined with $E_{1}^{(n)}(\alpha,\tau)\ge\phi(\alpha,\tau)$
yields $\lim_{n\to\infty}E_{1}^{(n)}(\alpha,\tau)=\phi(\alpha,\tau)$. 

Lastly, we verify that $(\alpha,\tau)$ is in the interior of $\mathrm{dom}\conv\phi_{\mathrm{b}}$.
It is easy to see that 
\begin{align*}
\phi_{\mathrm{b}}(\alpha,\tau) & \le\bar{\phi}_{\mathrm{b}}(\alpha,\tau):=\theta_{\alpha,\tau}(p,q)\\
 & =\inf_{s,t\in[0,1]:D(s\|p)\le\alpha,\,s-t>\tau}D(t\|q),
\end{align*}
and $\bar{\phi}_{\mathrm{b}}$ is convex. So, $\conv\phi_{\mathrm{b}}\le\bar{\phi}_{\mathrm{b}}$.
By the finiteness of $\phi(\alpha,\tau)$ and \eqref{eq:-4}, it holds
that $(\alpha,\tau)\in\mathrm{dom}\bar{\phi}_{\mathrm{b}}$. 

If $(p,q)\in(0,1)^{2}$, then by the expression in \eqref{eq:-82},
$\mathrm{dom}\bar{\phi}_{\mathrm{b}}=\{(\alpha,\tau)\in[0,\infty)\times[0,1):D(\tau\|p)<\alpha\}$.
So, any $(\alpha,\tau)\in\mathrm{dom}\bar{\phi}_{\mathrm{b}}$ such
that $\alpha>0,\tau\in(0,1)$ must be in the interior of $\mathrm{dom}\bar{\phi}_{\mathrm{b}}$,
and hence, in the interior of $\mathrm{dom}\conv\phi_{\mathrm{b}}$. 

If $p=0$ or $p=q=1$, then $\bar{\phi}_{\mathrm{b}}(\alpha,\tau)=\theta_{\alpha,\tau}(p,q)=\infty$
since $\tau>0$. This contradicts with $(\alpha,\tau)\in\mathrm{dom}\bar{\phi}_{\mathrm{b}}$.
So, this case cannot occur. 

If $p=1$ and $q\in[0,1)$, then $\mathrm{dom}\bar{\phi}_{\mathrm{b}}=\{(\alpha,\tau)\in[0,\infty)\times[0,1)\}$.
So, any $(\alpha,\tau)$ such that $\alpha>0,\tau\in(0,1)$ must be
in the interior of $\mathrm{dom}\bar{\phi}_{\mathrm{b}}$, and hence,
in the interior of $\mathrm{dom}\conv\phi_{\mathrm{b}}$. 

Combining all the cases above implies $(\alpha,\tau)$ is in the interior
of $\mathrm{dom}\conv\phi_{\mathrm{b}}$, completing the proof of
Statement 2. 

Statement 3: The proof of Statement 3 is similar to that of Statement
2. We first quantize the random variables $X,Y$ into Bernoulli random
variables $I,J$, and then apply Statement 3 of Theorem \ref{thm:LD-2}
(or Statement 3 of Theorem \ref{thm:LD}) to $P_{I},P_{J}$, yielding
the desired formula. We omit the proof details. 

\section{\label{sec:Proof-of-Theorem-2}Proof of Theorem \ref{thm:LD-1}}

\subsection{Statement 1 }

We first prove the upper bound for the case of finite $\mathcal{X}$
(and Polish $\mathcal{Y}$), and then generalize it to compact $\mathcal{X}$
and further to Polish $\mathcal{X}$. 

\subsubsection{\label{subsec:Finite}Finite $\mathcal{X}$}

We first consider that $\mathcal{X}$ is a finite metric space. For
this case, we extend Ahlswede, Yang, and Zhang's method \cite{ahlswede1997identification,ahlswede1999asymptotical}
to the case in which $\mathcal{Y}$ is an arbitrary Polish space (but
$\mathcal{X}$ is still a finite metric space). We divide the proof
into four steps. 

For this case, we prove 
\begin{equation}
\limsup_{n\to\infty}E_{0}^{(n)}(\alpha,\tau)\leq\psi(\alpha,\tau).\label{eq:-17-1-1}
\end{equation}
We assume $\psi(\alpha,\tau)<\infty$, since otherwise, the inequality
holds trivially. 

\textbf{Step 1: Inherently Typical Subset Lemma }

In our proof, we utilize the inherently typical subset lemma in \cite{ahlswede1997identification,ahlswede1999asymptotical}.
We now introduce this lemma. Let $A$ be any subset of $\mathcal{X}^{n}$.
For any $0\le i\le n-1$, define 
\[
A_{i}=\left\{ x^{i}\in\mathcal{X}^{i}:x^{i}\textrm{ is a prefix of some element of }A\right\} ,
\]
which is the projection of $A$ to the space $\mathcal{X}^{i}$ of
the first $i$ components. 
\begin{defn}
$A\subseteq\mathcal{X}^{n}$ is called $m$-inherently typical if
there exist a set $\mathcal{W}_{m}$ with $|\mathcal{W}_{m}|\le(m+1)^{|\mathcal{X}|}$
and $n$ mappings $\phi_{i}:A_{i}\to\mathcal{W}_{m},\;i\in[0:n-1]$
such that the following hold: \\
 (i) There exists a distribution (empirical measure) $Q_{XW}$ such
that for any $x^{n}\in A$, 
\[
\L_{x^{n}w^{n}}=Q_{XW}
\]
where $w^{n}$ is a sequence defined by $w_{i}=\phi_{i}(x^{i-1})$
for all $1\le i\le n$. Such a sequence is called a sequence associated
with $x^{n}$ through $(\phi_{i})$. \\
 (ii) 
\begin{equation}
H_{Q}(X|W)-\frac{\log^{2}m}{m}\le\frac{1}{n}\log|A|\le H_{Q}(X|W).\label{eq:-8}
\end{equation}
\end{defn}
For an $m$-inherently typical set $A$, let $Q_{X^{n}}$ be the uniform
distribution on $A$. We now give another interpretation of the $m$-inherently
typical set in the language of sufficient statistics. Let $W_{i}=\phi_{i}(X^{i-1})$.
First, observe that 
\begin{align*}
\frac{1}{n}\log|A| & =H_{Q}(X^{n})\\
 & =\sum_{i=1}^{n}H_{Q}(X_{i}|X^{i-1})\\
 & =\sum_{i=1}^{n}H_{Q}(X_{i}|X^{i-1},W_{i})\\
 & =H_{Q}(X_{K}|X^{K-1},W_{K},K)
\end{align*}
where $K$ is a random time index uniformly distributed over $[n]$
which is independent of $X^{n}$. Moreover, 
\begin{align}
Q_{X_{K},W_{K}} & =\mathbb{E}_{(X^{n},W^{n})\sim Q_{X^{n},W^{n}}}[Q_{X_{K},W_{K}|X^{n},W^{n}}]\nonumber \\
 & =\mathbb{E}_{(X^{n},W^{n})\sim Q_{X^{n},W^{n}}}[\L_{X^{n},W^{n}}]\nonumber \\
 & =Q_{X,W}.\label{eq:distletter}
\end{align}
Hence, the inequalities in \eqref{eq:-8} can be rewritten as 
\begin{equation}
0\le I_{Q}(X_{K};X^{K-1},K|W_{K})\le\frac{\log^{2}m}{m}.\label{eq:mi}
\end{equation}
The first inequality holds trivially since mutual information is nonnegative.
For sufficiently large $m$, the bound $\frac{\log^{2}m}{m}$ is sufficiently
small. Hence, $I_{Q}(X_{K};X^{K-1},K|W_{K})$ is close to zero. In
this case, $X_{K}$ and $(X^{K-1},K)$ are approximately conditionally
independent given $W_{K}$. In other words, $W_{K}$ is an approximate
sufficient statistic for ``underlying parameter'' $X_{K}$; we refer
readers to \cite[Section 2.9]{Cov06} for sufficient statistics and
\cite{hayashi2017minimum} for approximate versions.

As for $m$-inherent typical sets, one of the most important results
is the inherently typical subset lemma, which concerns the existence
of inherent typical sets. Such a lemma was proven by Ahlswede, Yang,
and Zhang \cite{ahlswede1997identification,ahlswede1999asymptotical}. 
\begin{lem}[Inherently Typical Subset Lemma]
For any $m\ge2^{16|\mathcal{X}|^{2}}$, $n$ satisfying $\left((m+1)^{5|\mathcal{X}|+4}\log(n+1)\right)/n\le1$,
and any $A\subseteq\mathcal{X}^{n}$, there exists an $m$-inherently
typical subset $\tilde{A}\subseteq A$ such that 
\[
0\le\frac{1}{n}\log\frac{|A|}{|\tilde{A}|}\leq|\mathcal{X}|(m+1)^{|\mathcal{X}|}\frac{\log(n+1)}{n}.
\]
\end{lem}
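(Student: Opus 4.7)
The plan is to follow the iterative quantization/pigeonhole approach of Ahlswede, Yang, and Zhang. Let $\mathcal{W}_m$ be the set of types on $\mathcal{X}$ with denominator $m$, so $|\mathcal{W}_m|\le(m+1)^{|\mathcal{X}|}$. For each prefix $x^{i-1}\in A_{i-1}$, define the conditional empirical distribution $P_i(\cdot|x^{i-1})$ as the distribution of $X_i$ when $X^n$ is drawn uniformly from $A$ conditioned on $X^{i-1}=x^{i-1}$, and set $\phi_i(x^{i-1})$ to be the type in $\mathcal{W}_m$ closest to $P_i(\cdot|x^{i-1})$ in $\ell_\infty$ (ties broken lexicographically). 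This produces, for every $x^n\in A$, an associated sequence $w^n\in\mathcal{W}_m^n$ with $w_i=\phi_i(x^{i-1})$.

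Next, partition $A$ according to the joint empirical measure $\L_{x^n,w^n}\in\mathcal{P}(\mathcal{X}\times\mathcal{W}_m)$. Since joint empirical measures of $n$-length sequences are determined by nonnegative integer vectors of length $|\mathcal{X}||\mathcal{W}_m|$ summing to $n$, there are at most $(n+1)^{|\mathcal{X}||\mathcal{W}_m|}\le(n+1)^{|\mathcal{X}|(m+1)^{|\mathcal{X}|}}$ classes. The pigeonhole principle produces a common joint type $Q_{XW}$ whose class $\tilde A\subseteq A$ satisfies
\[
\frac{1}{n}\log\frac{|A|}{|\tilde A|}\;\le\;\frac{|\mathcal{X}|(m+1)^{|\mathcal{X}|}\log(n+1)}{n},
\]
which is the second inequality in the statement. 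Condition (i) in the definition of $m$-inherently typical set holds by construction.

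It remains to verify the two-sided bound (ii) on $\frac{1}{n}\log|\tilde A|$. The upper bound $\frac{1}{n}\log|\tilde A|\le H_Q(X|W)$ is standard: $\tilde A$ is contained in a union of conditional type classes with conditional type $Q_{X|W}$, each of cardinality $\le\exp(nH_Q(X|W))$, and the tree structure of the $\phi_i$'s forces at most one such conditional type class per admissible $w^n$. The lower bound $\frac{1}{n}\log|\tilde A|\ge H_Q(X|W)-\log^2 m/m$ is the technical core; here the quantization $P_i(\cdot|x^{i-1})\mapsto w_i$ introduces only $O(|\mathcal{X}|/m)$ total-variation error, which by a Fannes-type continuity estimate perturbs conditional entropy by $O(\log^2 m/m)$. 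One then lower-bounds the count of $x^n\in A$ consistent with $w^n$ by multiplying conditional counts along the prefix tree, each at least $\exp(n_w(H(P_i(\cdot|x^{i-1}))-o(1)))$ with $n_w$ the marginal $Q_W$-count, and uses the continuity estimate to pass from these empirical conditional entropies to $H_Q(X|W)$.

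The main obstacle is this lower bound: the quantization errors must be controlled \emph{uniformly} along the iterative procedure, and the pigeonhole step must not cost more than the target rate. This is exactly where the hypotheses $m\ge2^{16|\mathcal{X}|^2}$ and $(m+1)^{5|\mathcal{X}|+4}\ln(n+1)/n\le1$ enter: the first ensures that the Fannes estimate yields the clean $\log^2 m/m$ bound (the exponent $16|\mathcal{X}|^2$ makes $|\mathcal{X}|/m$ small enough compared to $1/\log m$), while the second ensures that the extra polynomial factor $(n+1)^{|\mathcal{X}|(m+1)^{|\mathcal{X}|}}$ absorbed during pigeonhole is negligible relative to the entropy scale $n H_Q(X|W)$, so the empirical entropy of $\tilde A$ is not degraded below what the greedy refinement promises.
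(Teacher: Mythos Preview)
The paper does not give its own proof of this lemma; it quotes it verbatim from Ahlswede, Yang, and Zhang \cite{ahlswede1997identification,ahlswede1999asymptotical} and uses it as a black box. So there is no proof in the paper to compare against; only the original AYZ construction.

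Your sketch has the right ingredients but contains a genuine gap in the lower bound for (ii). You define the maps $\phi_i$ by quantizing the conditional distributions $P_i(\cdot\mid x^{i-1})$ computed from the \emph{original} set $A$, and only afterwards pigeonhole down to $\tilde A$. The difficulty is that pigeonholing can drastically distort the conditional distributions: the law of $X_i$ given $X^{i-1}=x^{i-1}$ under the uniform measure on $\tilde A$ need not be close to $P_i(\cdot\mid x^{i-1})$ at all, so the Fannes-type continuity bound you invoke (which compares $w_i$ to $P_i$) says nothing about the conditionals inside $\tilde A$. Consequently the chain $\frac{1}{n}\log|\tilde A|=\frac{1}{n}\sum_i H_{\tilde Q}(X_i\mid X^{i-1})\approx H_Q(X\mid W)$ is not justified: the middle terms involve the $\tilde A$-conditionals, whereas your approximation controls the $A$-conditionals.

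The AYZ argument avoids this by an \emph{iterated} refinement rather than a single quantize-then-pigeonhole pass: one repeatedly passes to large subsets on which the current conditional distributions are already (approximately) constant along each $w$-fiber, redefining the $\phi_i$'s at each stage so that the quantization remains faithful to the \emph{current} subset. The two hypotheses on $m$ and $n$ are calibrated to make the accumulated losses from these repeated refinements sum to the stated bounds. Your final paragraph gestures at this (``controlled uniformly along the iterative procedure''), but the construction you actually wrote down is a single pass and does not implement it. To close the gap you would need to specify the iterative refinement explicitly and track how each step affects both $|\tilde A|$ and the conditional entropies.
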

\textbf{Step 2: Multi-letter Bound}

For any $A\subseteq\mathcal{X}^{n}$, denote $A_{Q_{X}}:=A\cap\{x^{n}:\L_{x^{n}}=Q_{X}\}$
for empirical measure $Q_{X}$. Since $A=\bigcup_{Q_{X}}A_{Q_{X}}$
and the number of distinct types is no more than $(n+1)^{|\mathcal{X}|}$,
by the pigeonhole principle, we have 
\[
P_{X}^{\otimes n}(A_{Q_{X}})\geq P_{X}^{\otimes n}(A)(n+1)^{-|\mathcal{X}|}
\]
for some empirical measure $Q_{X}$.

By the lemma above, given $m\ge2^{16|\mathcal{X}|^{2}}$, for all
sufficiently large $n$, there exists an $m$-inherently typical subset
$\tilde{A}\subseteq A_{Q_{X}}$ such that 
\[
|\tilde{A}|\ge|A_{Q_{X}}|\cdot(n+1)^{-b}
\]
where $b=|\mathcal{X}|(m+1)^{|\mathcal{X}|}$. Observe that for any
$B\subseteq\{x^{n}:\L_{x^{n}}=Q_{X}\}$, we have $P_{X}^{\otimes n}(B)=|B|e^{n\sum_{x}Q_{X}(x)\log P_{X}(x)}$.
Hence, 
\[
P_{X}^{\otimes n}(\tilde{A})\ge P_{X}^{\otimes n}(A_{Q_{X}})(n+1)^{-b}\geq P_{X}^{\otimes n}(A)(n+1)^{-b'}
\]
where $b'=b+|\mathcal{X}|=|\mathcal{X}|(1+(m+1)^{|\mathcal{X}|})$.

Let $Q_{X^{n}}$ be the uniform distribution on $\tilde{A}$. Then,
\eqref{eq:distletter} and \eqref{eq:mi2} still hold, and moreover,
\[
D(Q_{X^{n}}\|P_{X}^{\otimes n})=-\frac{1}{n}\log P_{X}^{\otimes n}(\tilde{A})\le-\frac{1}{n}\log P_{X}^{\otimes n}(A)+o_{n}(1).
\]
If $P_{X}^{\otimes n}(A)\ge e^{-n\alpha}$, we have 
\begin{equation}
D(Q_{X^{n}}\|P_{X}^{\otimes n})\le\alpha+o_{n}(1).\label{eq:-9}
\end{equation}

Denote $t=n\tau$. Let $Q_{Y^{n}|X^{n}}$ be a conditional distribution
such that given each $x^{n}$, $Q_{Y^{n}|X^{n}=x^{n}}$ is concentrated
on the cost ball $B_{t}(x^{n}):=\{y^{n}:c_{n}(x^{n},y^{n})\le t\}$.
Then, we have that $Q_{Y^{n}}:=Q_{X^{n}}\circ Q_{Y^{n}|X^{n}}$ is
concentrated on $A^{t}$, which implies that $-\frac{1}{n}\log P_{Y}^{\otimes n}(A^{t})\le\frac{1}{n}D_{0}(Q_{Y^{n}}\|P_{Y}^{\otimes n})\leq\frac{1}{n}D(Q_{Y^{n}}\|P_{Y}^{\otimes n})$.
Here $D_{0}(Q\|P):=-\log P\{\frac{\mathrm{d}Q}{\mathrm{d}P}>0\}$
is the Rényi divergence of order $0$, which is no greater than the
relative entropy $D(Q\|P)$ \cite{Erven}. Since $Q_{Y^{n}|X^{n}}$
is arbitrary, we have 
\[
-\frac{1}{n}\log P_{Y}^{\otimes n}(A^{t})\le\inf_{Q_{Y^{n}|X^{n}}:c_{n}(X^{n},Y^{n})\le t\textrm{ a.s.}}\frac{1}{n}D(Q_{Y^{n}}\|P_{Y}^{\otimes n}).
\]
Taking supremum of the RHS over all $Q_{X^{n}}$ satisfying \eqref{eq:distletter},
\eqref{eq:mi2}, and \eqref{eq:-9}, we have 
\begin{align}
 & E_{0}^{(n)}(\alpha,\tau)\nonumber \\
 & \le\eta_{n}(\alpha,\tau)\nonumber \\
 & :=\sup_{\substack{Q_{X^{n}},Q_{XW}:\\
\frac{1}{n}D(Q_{X^{n}}\|P_{X}^{\otimes n})\le\alpha+o_{n}(1),\\
Q_{X^{n}W^{n}}\{(x^{n},w^{n}):\L_{x^{n},w^{n}}=Q_{XW}\}=1,\\
I_{Q}(X_{K};X^{K-1},K|W_{K})=o_{m}(1)
}
}\nonumber \\
 & \qquad\qquad\inf_{Q_{Y^{n}|X^{n}}:c_{n}(X^{n},Y^{n})\le t\textrm{ a.s.}}\frac{1}{n}D(Q_{Y^{n}}\|P_{Y}^{\otimes n}),\label{eq:-3-1-2}
\end{align}
where $W_{i}=\phi_{i}(X^{i-1})$. The condition $Q_{X^{n}W^{n}}\{(x^{n},w^{n}):\L_{x^{n},w^{n}}=Q_{XW}\}=1$
implies $Q_{X_{K},W_{K}}=Q_{XW}$.

\textbf{Step 3: Single-letterizing the Cost Constraint}

We next make a special choice of $Q_{Y^{n}|X^{n}}$. Let $\delta>0$
be sufficiently small such that $\psi_{m}(\alpha+\delta,\tau-\delta)<\infty$,
where $\psi_{m}$ is defined similarly as $\psi$ but with $W$ restricted
to concentrate on the alphabet $\mathcal{W}_{m}$ satisfying $|\mathcal{W}_{m}|\le(m+1)^{|\mathcal{X}|}$.
\begin{lem}
\label{lem:bounds} There is some $Q_{Y|XW}$ such that 
\begin{align}
\mathbb{E}_{Q}[c(X,Y)] & \le\tau-\delta,\label{eq:-16-1}\\
D(Q_{Y|W}\|P_{Y}|Q_{W}) & \le\psi_{m}(\alpha+\delta,\tau-\delta)+\delta,\label{eq:-18-2}\\
\mathbb{E}_{Q}\left[c(X,Y)^{2}\right] & \textrm{ is uniformly bounded},\label{eq:-63}
\end{align}
for all $Q_{XW}$ satisfying 
\begin{equation}
D(Q_{X|W}\|P_{X}|Q_{W})\le\alpha+\delta.\label{eq:-62}
\end{equation}
\end{lem}
The proof of this lemma is provided in Section \ref{sec:Proofs-of-Lemma}.

Let $Q_{Y|XW}$ be the conditional distribution given in Lemma \ref{lem:bounds}.
By standard information-theoretic techniques, it holds that 
\begin{align*}
 & \frac{1}{n}D(Q_{X^{n}}\|P_{X}^{\otimes n})\\
 & =\frac{1}{n}\sum_{k=1}^{n}D(Q_{X_{k}|X^{k-1}}\|P_{X}|Q_{X^{k-1}})\\
 & =D(Q_{X_{K}|X^{K-1}K}\|P_{X}|Q_{X^{K-1}K})\\
 & =I_{Q}(X_{K};X^{K-1}K|W)+D(Q_{X|W}\|P_{X}|Q_{W})\\
 & \geq D(Q_{X|W}\|P_{X}|Q_{W}).
\end{align*}
So, the condition $\frac{1}{n}D(Q_{X^{n}}\|P_{X}^{\otimes n})\le\alpha+o_{n}(1)$
in the \eqref{eq:-3-1-2} implies that \eqref{eq:-62} is satisfied
for all sufficiently large $n$. The conclusions in Lemma \ref{lem:bounds}
hold for the $Q_{XW}$ induced by $Q_{X^{n}}$ in the optimization
in \eqref{eq:-3-1-2}. 

However, the product distribution $Q_{Y|XW}^{\otimes n}$ does not
satisfy the constraint $c_{n}(X^{n},Y^{n})\le t\textrm{ a.s.}$ So,
we cannot substitute it into \eqref{eq:-3-1-2} directly. We next
construct a conditional version of $Q_{Y|XW}^{\otimes n}$ and then
substitute this conditional version into \eqref{eq:-3-1-2}.

Denote 
\[
\mu:=\mathbb{E}_{Q}c(X,Y)\le\tau-\delta.
\]
Then, 
for all $(x^{n},w^{n})$ with type $Q_{XW}$ with $w_{i}=\phi_{i}(x^{i-1})$
and for $Y^{n}\sim Q_{Y|X,W}^{\otimes n}(\cdot|x^{n},w^{n})$, it
holds that 
\begin{align*}
\mathbb{E}c_{n}(x^{n},Y^{n}) & =\sum_{k=1}^{n}\mathbb{E}c(x_{k},Y_{k})=\sum_{k=1}^{n}\mu(x_{k},w_{k})\\
 & =n\mathbb{E}_{Q_{XW}}\mu(X,W)=n\mu,
\end{align*}
where $\mu(x,w):=\mathbb{E}_{Q_{Y|(X,W)=(x,w)}}c(x,Y)$. By Chebyshev's
inequality, it holds that 
\begin{align}
\epsilon_{n} & :=\Q\{Y^{n}\notin\{x^{n}\}^{t}\}\nonumber \\
 & =\Q\{c_{n}(x^{n},Y^{n})>n\tau\}\nonumber \\
 & \le\frac{\mathbb{E}_{Q}\left[(c_{n}(x^{n},Y^{n})-n\mu)^{2}\right]}{n^{2}(\tau-\mu)^{2}}\nonumber \\
 & =\frac{\sum_{k=1}^{n}\mathbb{E}_{Q}\left[(c(x_{k},Y_{k})-\mu(x_{k},w_{k}))^{2}\right]}{n^{2}(\tau-\mu)^{2}}\nonumber \\
 & =\frac{\mathbb{E}_{Q_{XW}}\mathrm{Var}(c(X,Y)|X,W)}{n(\tau-\mu)^{2}}\nonumber \\
 & \le\frac{\mathrm{Var}_{Q}(c(X,Y))}{n(\tau-\mu)^{2}}\nonumber \\
 & \le\frac{\mathbb{E}_{Q}\left[c(X,Y)^{2}\right]}{n(\tau-\mu)^{2}}.\label{eq:-61}
\end{align}
Recall that $\Q$ denotes the underlying probability measure that
induces $Q_{Y|X,W}^{\otimes n}$. Combining \eqref{eq:-63} and \eqref{eq:-61}
yields that $\epsilon_{n}$ vanishes as $n\to\infty$ uniformly for
all $Q_{XW}$ induced by $Q_{X^{n}}$ in the optimization in \eqref{eq:-3-1-2}. 

Denote $\hat{Q}_{Y^{n}|X^{n}W^{n}}$ as a distribution given by 
\begin{align*}
 & \hat{Q}_{Y^{n}|(X^{n},W^{n})=(x^{n},w^{n})}\\
 & =\left(\prod_{k=1}^{n}Q_{Y|(X,W)=(x_{k},w_{k})}\right)(\cdot|\{x^{n}\}^{t})
\end{align*}
for all $x^{n}$ and $w_{i}=\phi_{i}(x^{i-1})$. Denote 
\begin{align*}
 & \tilde{Q}_{Y^{n}|(X^{n},W^{n})=(x^{n},w^{n})}\\
 & =\left(\prod_{k=1}^{n}Q_{Y|(X,W)=(x_{k},w_{k})}\right)(\cdot|(\{x^{n}\}^{t})^{c}).
\end{align*}
We can rewrite $Q_{Y|XW}^{\otimes n}$ as a mixture: 
\begin{align*}
Q_{Y|XW}^{\otimes n}(\cdot|x^{n},w^{n}) & =(1-\epsilon_{n})\hat{Q}_{Y^{n}|(X^{n},W^{n})=(x^{n},w^{n})}\\
 & \qquad\qquad+\epsilon_{n}\tilde{Q}_{Y^{n}|(X^{n},W^{n})=(x^{n},w^{n})}.
\end{align*}
For the same input distribution $Q_{X^{n}}$, the output distributions
of channels $Q_{Y|XW}^{\otimes n},\hat{Q}_{Y^{n}|X^{n},W^{n}}$, and
$\tilde{Q}_{Y^{n}|X^{n},W^{n}}$ are respectively denoted as $Q_{Y^{n}},\hat{Q}_{Y^{n}},$
and $\tilde{Q}_{Y^{n}}$, which satisfy 
\[
Q_{Y^{n}}=(1-\epsilon_{n})\hat{Q}_{Y^{n}}+\epsilon_{n}\tilde{Q}_{Y^{n}}.
\]
Denote $J\sim Q_{J}:=\mathrm{Bern}(\epsilon_{n})$, and $Q_{Y^{n}|J=1}=\hat{Q}_{Y^{n}},Q_{Y^{n}|J=0}=\tilde{Q}_{Y^{n}}$.
Then, 
\[
Q_{Y^{n}}=Q_{J}(1)Q_{Y^{n}|J=1}+Q_{J}(0)Q_{Y^{n}|J=0}.
\]

Observe that 
\begin{align*}
 & D(Q_{Y^{n}|J}\|P_{Y}^{\otimes n}|Q_{J})\\
 & =(1-\epsilon_{n})D(\hat{Q}_{Y^{n}}\|P_{Y}^{\otimes n})+\epsilon_{n}D(\tilde{Q}_{Y^{n}}\|P_{Y}^{\otimes n})\\
 & \ge(1-\epsilon_{n})D(\hat{Q}_{Y^{n}}\|P_{Y}^{\otimes n}).
\end{align*}
On the other hand, 
\[
D(Q_{Y^{n}|J}\|P_{Y}^{\otimes n}|Q_{J})=D(Q_{Y^{n}}\|P_{Y}^{\otimes n})+D(Q_{J|Y^{n}}\|Q_{J}|Q_{Y^{n}}),
\]
and 
\[
D(Q_{J|Y^{n}}\|Q_{J}|Q_{Y^{n}})=I_{Q}(J;Y^{n})\le H_{Q}(J)\le\log2.
\]
Hence, 
\begin{equation}
D(\hat{Q}_{Y^{n}}\|P_{Y}^{\otimes n})\le\frac{D(Q_{Y^{n}}\|P_{Y}^{\otimes n})+\log2}{1-\epsilon_{n}}.\label{eq:-65}
\end{equation}

By choosing $Q_{Y^{n}|X^{n}}$ in \eqref{eq:-3-1-2} as a feasible
solution such that $Q_{Y^{n}|X^{n}=x^{n}}=\hat{Q}_{Y^{n}|(X^{n},W^{n})=(x^{n},w^{n})}$
for all $x^{n}$ where $w_{i}=\phi_{i}(x^{i-1})$, we then have that
the objective function $\frac{1}{n}D(\hat{Q}_{Y^{n}}\|P_{Y}^{\otimes n})$
in \eqref{eq:-3-1-2} is upper bounded as shown in \eqref{eq:-65}.
It means that for the fixed distribution $Q_{Y|XW}$ given in Lemma
\ref{lem:bounds}, it holds that 
\begin{align*}
\eta_{n}\left(\alpha,\tau\right) & \le\sup_{\substack{Q_{X^{n}},Q_{XW}:\\
\frac{1}{n}D(Q_{X^{n}}\|P_{X}^{\otimes n})\le\alpha+o_{n}(1),\\
Q_{X_{K},W_{K}}=Q_{XW},\\
I_{Q}(X_{K};X^{K-1},K|W_{K})=o_{m}(1)
}
}\frac{D(Q_{Y^{n}}\|P_{Y}^{\otimes n})}{n(1-\epsilon_{n})}+o_{n}(1).
\end{align*}

\textbf{Step 4: Single-letterizing Divergences}

We next complete the single-letterization. By standard information-theoretic
techniques, we obtain that 
\begin{align}
 & \frac{1}{n}D(Q_{Y^{n}}\|P_{Y}^{\otimes n})\\
 & =\frac{1}{n}\sum_{k=1}^{n}D(Q_{Y_{k}|Y^{k-1}}\|P_{Y}|Q_{Y^{k-1}})\\
 & \le\frac{1}{n}\sum_{k=1}^{n}D(Q_{Y_{k}|X^{k-1}Y^{k-1}}\|P_{Y}|Q_{X^{k-1}Y^{k-1}})\\
 & =\frac{1}{n}\sum_{k=1}^{n}D(Q_{Y_{k}|X^{k-1}}\|P_{Y}|Q_{X^{k-1}})\\
 & \qquad+\frac{1}{n}\sum_{k=1}^{n}D(Q_{Y_{k}|X^{k-1}Y^{k-1}}\|Q_{Y_{k}|X^{k-1}}|Q_{X^{k-1}Y^{k-1}})\\
 & =\frac{1}{n}\sum_{k=1}^{n}D(Q_{Y_{k}|X^{k-1}}\|P_{Y}|Q_{X^{k-1}})\\
 & \qquad+\frac{1}{n}\sum_{k=1}^{n}I_{Q}(Y_{k};Y^{k-1}|X^{k-1})\\
 & =\frac{1}{n}\sum_{k=1}^{n}D(Q_{Y_{k}|X^{k-1}}\|P_{Y}|Q_{X^{k-1}})\label{eq:mi2}\\
 & =D(Q_{Y_{K}|X^{K-1}K}\|P_{Y}|Q_{X^{K-1}K})\\
 & =D(Q_{Y_{K}|X^{K-1}KW_{K}}\|Q_{Y_{K}|W_{K}}|Q_{X^{K-1}KW_{K}})\\
 & \qquad+D(Q_{Y_{K}|W_{K}}\|P_{Y}|Q_{W_{K}})\\
 & =I_{Q}(Y_{K};X^{K-1},K|W_{K})+D(Q_{Y_{K}|W_{K}}\|P_{Y}|Q_{W_{K}})\\
 & =D(Q_{Y_{K}|W_{K}}\|P_{Y}|Q_{W_{K}})+o_{m}(1)\label{eq:-37}\\
 & =D(Q_{Y|W}\|P_{Y}|Q_{W})+o_{m}(1)\label{eq:-66}\\
 & \le\psi_{m}(\alpha+\delta,\tau-\delta)+\delta+o_{m}(1),
\end{align}
where 
\begin{itemize}
\item \eqref{eq:mi2} follows since under the distribution $Q_{X^{n}W^{n}}Q_{Y|XW}^{\otimes n}$,
$W^{k}$ is a function of $X^{k-1}$, and moreover, $Y_{k}$ and $Y^{k-1}$
are conditionally independent given $(X^{k-1},W^{k})$ for each $k$; 
\item \eqref{eq:-37} follows since under the distribution $Q_{K}\otimes Q_{X^{n}W^{n}}Q_{Y|XW}^{\otimes n}$
with $Q_{K}=\Unif[n]$, $(K,X^{K-1})$ and $Y_{K}$ are conditionally
independent given $(X_{K},W_{K})$, and hence, 
\begin{align*}
 & I_{Q}(Y_{K};X^{K-1},K|W_{K})\\
 & \le I_{Q}(X_{K};X^{K-1},K|W_{K})=o_{m}(1);
\end{align*}
\item in \eqref{eq:-66}, $Q_{Y|W}$ is induced by the distribution $Q_{XW}Q_{Y|XW}$,
and \eqref{eq:-66} follows since $Q_{Y_{K}|W}$ is induced by the
distribution $Q_{X_{K}W_{K}}Q_{Y|XW}$, and hence, $Q_{Y_{K}|W}=Q_{Y|W}$
(recall that $Q_{X_{K}W_{K}}=Q_{XW}$);
\item the last line follows by  Lemma \ref{lem:bounds}. 
\end{itemize}
Hence, 
\begin{align*}
\eta_{n}\left(\alpha,\tau\right) & \le\frac{\psi_{m}(\alpha+\delta,\tau-\delta)+\delta+o_{m}(1)}{1-\epsilon_{n}}+o_{n}(1).
\end{align*}

Letting $n\to\infty$ first and $\delta\downarrow0$ then, we obtain
\begin{equation}
\limsup_{n\to\infty}E_{0}^{(n)}(\alpha,\tau)\leq\limsup_{\alpha'\downarrow\alpha,\tau'\uparrow\tau}\psi_{m}(\alpha',\tau').\label{eq:-17}
\end{equation}
Since $\mathcal{P}(\mathcal{X}\times\mathcal{W}_{m})$ is a probability
simplex, by the standard technique of passing a sequence to a convergent
subsequence, one can prove that $\psi_{m}$ is upper semicontinuous,
i.e., $\limsup_{\alpha'\downarrow\alpha,\tau'\uparrow\tau}\psi_{m}(\alpha',\tau')=\psi(\alpha,\tau)$.
 By \eqref{eq:-17} and the upper semicontinuity of $\psi_{m}$,
and letting $m\to\infty$, we obtain 
\begin{equation}
\limsup_{n\to\infty}E_{0}^{(n)}(\alpha,\tau)\leq\psi(\alpha,\tau).\label{eq:-17-1}
\end{equation}

\subsubsection{\label{subsec:Compact}Compact $\mathcal{X}$}

We next generalize the result from finite $\mathcal{X}$ to compact
$\mathcal{X}$ by the standard quantization technique. Since $\mathcal{X}$
is compact, for any $r>0$, it can be covered by a finite number of
open balls $\left\{ B_{r}(x_{i})\right\} _{i=1}^{k}$. Denote $E_{i}:=B_{r}(x_{i})\backslash\bigcup_{j=1}^{i-1}B_{r}(x_{j}),i\in[k]$,
which are measurable. Hence, $\left\{ E_{i}\right\} _{i=1}^{k}$ forms
a partition of $\mathcal{X}$, and $E_{i}$ is a subset of $B_{r}(x_{i})$.
For each $i$, we choose a point $z_{i}\in E_{i}$. Consider $\mathcal{Z}:=\{z_{1},z_{2},\cdots,z_{k}\}$
as a sample space, and define a probability mass function $P_{Z}$
on $\mathcal{Z}$ given by $P_{Z}(z_{i})=P_{X}(E_{i}),\forall i\in[k].$
In other words, $Z\sim P_{Z}$ is a quantized version of $X\sim P_{X}$
in the sense that $Z=z_{i}$ if $X\in E_{i}$ for some $i$.

For a vector $i^{n}:=(i_{1},i_{2},...,i_{n})\in[k]^{n}$, denote $E_{i^{n}}:=\prod_{l=1}^{n}E_{i_{l}}$.
Consequently, $\left\{ E_{i^{n}}:i^{n}\in[k]^{n}\right\} $ forms
a partition of $\mathcal{X}^{n}$. Similarly, for $X^{n}\sim P_{X}^{\otimes n}$,
we denote $Z^{n}$ as a random vector where $Z_{i}$ is the quantized
version of $X_{i},i\in[n]$. Obviously, $Z^{n}\sim P_{Z}^{\otimes n}$.

For any measurable set $A\subseteq\mathcal{X}^{n}$, denote $\mathcal{I}:=\{i^{n}\in[k]^{n}:E_{i^{n}}\cap A\neq\emptyset\}$.
Denote $\hat{A}:=\bigcup_{i^{n}\in\mathcal{I}}E_{i^{n}}$ which is
a superset of $A$, i.e., $A\subseteq\hat{A}$. On the other hand,
for each $i^{n}\in\mathcal{I}$ and any $\hat{\tau}>0$, the $\hat{t}$-enlargement
of $E_{i^{n}}$ with $\hat{t}:=n\hat{\tau}$ satisfies that 
\begin{align}
E_{i^{n}}^{\hat{t}} & =\{y^{n}:c_{n}(x^{n},y^{n})\le\hat{t},\exists x^{n}\in E_{i^{n}}\}\nonumber \\
 & \subseteq\{y^{n}:c_{n}(x^{n},y^{n})\le\hat{t},d(x_{i},\hat{x}_{i})\le r,\forall i\in[n],\\
 & \qquad\qquad\exists\hat{x}^{n}\in A,x^{n}\in\mathcal{X}^{n}\}\label{eq:-13}\\
 & =\{y^{n}:\inf_{x^{n}:d(x_{i},\hat{x}_{i})\le r,\forall i\in[n]}c_{n}(x^{n},y^{n})\le\hat{t},\exists\hat{x}^{n}\in A\}\nonumber \\
 & =\{y^{n}:\sum_{i=1}^{n}\inf_{x_{i}:d(x_{i},\hat{x}_{i})\le r}c(x_{i},y_{i})\le\hat{t},\exists\hat{x}^{n}\in A\}\nonumber \\
 & \subseteq\{y^{n}:\sum_{i=1}^{n}c(\hat{x}_{i},y_{i})\le n(\hat{\tau}+\delta(r)),\exists\hat{x}^{n}\in A\}\label{eq:-10}\\
 & =A^{n(\hat{\tau}+\delta(r))},\nonumber 
\end{align}
where 
\begin{itemize}
\item \eqref{eq:-13} follows from the fact that $\exists x^{n}\in E_{i^{n}}$
implies $d(x_{i},\hat{x}_{i})\le r,\forall i\in[n]$ for some $\hat{x}^{n}\in A,x^{n}\in\mathcal{X}^{n}$; 
\item in \eqref{eq:-10} $\delta(r)$ is a positive function of $r$ which
vanishes as $r\downarrow0$, and \eqref{eq:-10} follows by Assumption
3 (i.e., \eqref{eq:-39}). 
\end{itemize}
Hence, 
\[
\hat{A}^{\hat{t}}=\bigcup_{i^{n}\in\mathcal{I}}E_{i^{n}}^{\hat{t}}\subseteq A^{n(\hat{\tau}+\delta(r))}
\]
If we choose $\hat{\tau}=\tau-\delta(r)$, then $\hat{A}^{n\hat{\tau}}\subseteq A^{n\tau}.$
Combining this with $A\subseteq\hat{A}$ implies 
\begin{align*}
P_{Y}^{\otimes n}(A^{n\tau}) & \ge P_{Y}^{\otimes n}(\hat{A}^{n\hat{\tau}})\\
P_{X}^{\otimes n}(A) & \le P_{X}^{\otimes n}(\hat{A}),
\end{align*}
which further imply that 
\begin{align*}
 & \inf_{A:P_{X}^{\otimes n}(A)\ge a}P_{Y}^{\otimes n}(A^{n\tau})\\
 & \ge\inf_{A:P_{X}^{\otimes n}(\hat{A})\ge a}P_{Y}^{\otimes n}(\hat{A}^{n\hat{\tau}})\\
 & =\inf_{\mathcal{I}\subseteq[k]^{n}:P_{X}^{\otimes n}(\bigcup_{i^{n}\in\mathcal{I}}E_{i^{n}})\ge a}P_{Y}^{\otimes n}((\bigcup_{i^{n}\in\mathcal{I}}E_{i^{n}})^{n\hat{\tau}})\\
 & =\inf_{B\subseteq\mathcal{Z}^{n}:P_{Z}^{\otimes n}(B)\ge a}P_{Y}^{\otimes n}(B^{n\hat{\tau}}),
\end{align*}
where $B^{n\hat{\tau}}=\{y^{n}:c_{n}(z^{n},y^{n})\le n\hat{\tau},\exists z^{n}\in B\}$.
Therefore, 
\[
E_{0}^{(n)}(\alpha,\tau|P_{X})\leq E_{0}^{(n)}(\alpha,\hat{\tau}|P_{Z}),
\]
where $E_{0}^{(n)}(\cdot,\cdot|P_{X})$ is the exponent $E_{0}^{(n)}$
defined for distribution pair $(P_{X},P_{Y})$, and $E_{0}^{(n)}(\cdot,\cdot|P_{Z})$
is the exponent $E_{0}^{(n)}$ defined for $(P_{Z},P_{Y})$.

Denote $\psi(\cdot,\cdot|P_{X})$ as the function $\psi$ defined
for $(P_{X},P_{Y})$, and $\psi(\cdot,\cdot|P_{Z})$ as the one defined
for $(P_{Z},P_{Y})$. Since $\mathcal{Z}$ is a finite metric space
(with the discrete/Hamming metric), by the result proven in Section
\ref{subsec:Finite}, we have 
\[
\limsup_{n\to\infty}E_{0}^{(n)}(\alpha,\hat{\tau}|P_{Z})\leq\psi(\alpha,\hat{\tau}|P_{Z}).
\]
Therefore,

\begin{align}
\limsup_{n\to\infty}E_{0}^{(n)}(\alpha,\tau|P_{X}) & \leq\psi(\alpha,\hat{\tau}|P_{Z})\nonumber \\
 & =\psi(\alpha,\tau-\delta(r)|P_{Z}).\label{eq:-14}
\end{align}

We next show that $\psi(\alpha',\tau'+\delta(r)|P_{Z})\le\psi(\alpha',\tau'|P_{X})$
for any $\alpha'\ge0,\tau'>0$. For any $Q_{Z|W}$, we define a mixture
distribution $Q_{X|W}$ such that for each $w$, 
\[
Q_{X|W=w}=\sum_{i=1}^{k}Q_{Z|W}(z_{i}|w)P_{X}(\cdot|E_{i}),
\]
which implies 
\begin{align}
\frac{\mathrm{d}Q_{X|W}}{\mathrm{d}P_{X}}(x|w) & =\sum_{i=1}^{k}Q_{Z|W}(z_{i}|w)\frac{\bone_{E_{i}}(x)}{P_{X}(E_{i})}\nonumber \\
 & =\sum_{i=1}^{k}Q_{Z|W}(z_{i}|w)\frac{\bone_{E_{i}}(x)}{P_{Z}(z_{i})},\forall x.\label{eq:-11}
\end{align}
For such $Q_{X|W}$, 
\begin{equation}
D(Q_{X|W}\|P_{X}|Q_{W})=D(Q_{Z|W}\|P_{Z}|Q_{W}).\label{eq:-12}
\end{equation}
Note that for such a construction, $Z\sim Q_{Z}$ can be seen as a
quantized version of $X\sim Q_{X}$.

By Assumption 3, we have that $c(X,Y)\ge c(Z,Y)-\delta(r)$ a.s. where
$Z$ is the quantized version of (and also a function of) $X$. We
hence have that for $Q_{X|W}$ constructed above, 
\begin{align*}
 & \C(Q_{X|W},Q_{Y|W}|Q_{W})\\
 & =\min_{Q_{XY|W}\in\cC(Q_{X|W},Q_{Y|W})}\mathbb{E}_{Q_{W}Q_{XY|W}}[c(X,Y)]\\
 & \geq\min_{Q_{XY|W}\in\cC(Q_{X|W},Q_{Y|W})}\mathbb{E}_{Q_{W}Q_{XY|W}}[c(Z,Y)]-\delta(r)\\
 & \geq\min_{Q_{ZY|W}\in\cC(Q_{Z|W},Q_{Y|W})}\mathbb{E}_{Q_{W}Q_{ZY|W}}[c(Z,Y)]-\delta(r)\\
 & =\C(Q_{Z|W},Q_{Y|W}|Q_{W})-\delta(r).
\end{align*}
Therefore, 
\begin{align}
 & \inf_{Q_{Y|W}:\C(Q_{X|W},Q_{Y|W}|Q_{W})\le\tau'}D(Q_{Y|W}\|P_{Y}|Q_{W})\\
 & \geq\inf_{Q_{Y|W}:\C(Q_{Z|W},Q_{Y|W}|Q_{W})\le\tau'+\delta(r)}D(Q_{Y|W}\|P_{Y}|Q_{W}).
\end{align}
Taking supremum over $Q_{ZW}$ such that $D(Q_{Z|W}\|P_{Z}|Q_{W})\le\alpha'$,
we obtain 
\begin{align*}
 & \sup_{Q_{ZW}:D(Q_{Z|W}\|P_{Z}|Q_{W})\le\alpha'}\\
 & \qquad\inf_{Q_{Y|W}:\C(Q_{X|W},Q_{Y|W}|Q_{W})\le\tau'}D(Q_{Y|W}\|P_{Y}|Q_{W})\\
 & \geq\psi(\alpha',\tau'+\delta(r)|P_{Z})
\end{align*}
where $Q_{X|W}$ at the LHS above is induced by $Q_{Z|W}$ as shown
in \eqref{eq:-11}. By \eqref{eq:-12}, the LHS above is in turn upper
bounded by $\psi(\alpha',\tau'|P_{X})$ (by replacing the supremum
above with the supremum over $Q_{XW}$ such that $D(Q_{X|W}\|P_{X}|Q_{W})\le\alpha'$).
Hence, 
\[
\psi(\alpha',\tau'+\delta(r)|P_{Z})\le\psi(\alpha',\tau'|P_{X}).
\]

For $\tau>2\delta(r)$ (when $\tau>0$ and $r$ is sufficiently small),
substituting $\alpha'\leftarrow\alpha,\,\tau'\leftarrow\tau-2\delta(r)$
into the above inequality, we have 
\begin{equation}
\psi(\alpha,\tau-\delta(r)|P_{Z})\le\psi(\alpha,\tau-2\delta(r)|P_{X}).\label{eq:-15}
\end{equation}
Combining \eqref{eq:-14} and \eqref{eq:-15} and letting $r\downarrow0$,
we have 
\[
\limsup_{n\to\infty}E_{0}^{(n)}(\alpha,\tau|P_{X})\leq\lim_{\tau'\uparrow\tau}\psi(\alpha,\tau'|P_{X}).
\]

\subsection{\label{subsec:Statement-2}Statement 2 }

Since $\mathcal{X}$ is Polish, any probability measure on it is tight.
So, for any $\epsilon\in(0,1)$, there is a compact set $B\subseteq\mathcal{X}$
such that $P_{X}(B^{c})\le\epsilon$. Let $X^{n}\sim P_{X}^{\otimes n}$
and $Z_{i}:=\bone_{B^{c}}(X_{i})$, $i\in[n]$. Then, $Z^{n}\sim\Bern(P_{X}(B^{c}))^{\otimes n}$.
By Sanov's theorem, for any $\epsilon'\in(\epsilon,1)$, 
\[
\P\{\sum_{i=1}^{n}Z_{i}\ge n\epsilon'\}\le e^{-nD(\epsilon'\|P_{X}(B^{c}))}\le e^{-nD(\epsilon'\|\epsilon)},
\]
where the second inequality follows since $\epsilon\mapsto D(\epsilon'\|\epsilon)$
is decreasing for $\epsilon<\epsilon'$. Since $\epsilon\mapsto D(\epsilon'\|\epsilon)$
goes to infinity as $\epsilon\downarrow0$, given any $\epsilon'>0$,
we can choose $\epsilon$ small enough so that $D(\epsilon'\|\epsilon)>\alpha$.
For example, we can choose $\epsilon=\epsilon'e^{-1/\epsilon'^{2}}$
and choose $\epsilon'$ small enough. For any measurable set $A$
such that $P_{X}^{\otimes n}(A)\ge e^{-n\alpha}$, it holds that 
\begin{align*}
 & \P\{X^{n}\in A,\,\sum_{i=1}^{n}Z_{i}<n\epsilon'\}\\
 & \ge\P\{X^{n}\in A\}-\P\{\sum_{i=1}^{n}Z_{i}\ge n\epsilon'\}\\
 & \ge e^{-n\alpha}-e^{-nD(\epsilon'\|\epsilon)}.
\end{align*}
Given any $\delta>0$, for all sufficiently large $n$, 
\begin{equation}
\P\{X^{n}\in A,\,\sum_{i=1}^{n}Z_{i}<n\epsilon'\}\ge e^{-n(\alpha+\delta)}.\label{eq:-32}
\end{equation}
For a subset $\mathcal{J}\subseteq[n]$, denote $C_{\mathcal{J}}$
as the event that $X_{i}\in B$ for $i\in\mathcal{J}$ and $X_{i}\in B^{c}$
for $i\in\mathcal{J}^{c}$. Then, \eqref{eq:-32} can be rewritten
as 
\begin{equation}
\P\{X^{n}\in A\cap(\bigcup_{|\mathcal{J}|\ge n(1-\epsilon')}C_{\mathcal{J}})\}\ge e^{-n(\alpha+\delta)}.\label{eq:-32-4-1}
\end{equation}
On the other hand, there are ${n \choose \le n\epsilon'}:=\sum_{i=1}^{\left\lfloor n\epsilon'\right\rfloor }{n \choose i}$
of sets $\mathcal{J}\subseteq[n]$ such that $|\mathcal{J}|\ge n(1-\epsilon')$.
Note that by Sanov's theorem, ${n \choose \le n\epsilon'}\le e^{nH(\epsilon')}$,
where $H(\epsilon')$ is the binary entropy function of $\epsilon'$.
Combining this with \eqref{eq:-32-4-1} yields that 
\begin{equation}
\max_{|\mathcal{J}|\ge n(1-\epsilon')}\P\{X^{n}\in A\cap C_{\mathcal{J}}\}\ge e^{-n(\alpha+\delta+H(\epsilon'))}.\label{eq:-32-4-1-1}
\end{equation}
Let $\mathcal{J}^{*}$ be the optimal $\mathcal{J}$ attaining the
maximum in the above equation. Without loss of generality, we assume
$\mathcal{J}^{*}=[n^{*}]$ for some $n^{*}\ge n':=\left\lceil n(1-\epsilon')\right\rceil $.
Denote $A':=\bigcup_{x^{n}\in A}\{x^{n'}\}\subseteq\mathcal{X}^{n'}$
as the projection of $A$ to the first $n'$ coordinates. Then, the
maximum in \eqref{eq:-32-4-1-1} is upper bounded by $\P\{X^{n'}\in A'\cap B^{n'}\}=P_{X}^{\otimes n'}(A'\cap B^{n'})$.
Denote $c_{\sup}:=\sup_{x,y}c(x,y)$, which by assumption is finite.
Moreover, 
\begin{align}
A^{t} & \supseteq(A\cap C_{\mathcal{J}^{*}})^{t}\nonumber \\
 & =\bigcup_{x^{n}\in A\cap C_{\mathcal{J}^{*}}}\{x^{n}\}^{t}\nonumber \\
 & \supseteq\bigcup_{x^{n}\in A\cap C_{\mathcal{J}^{*}}}\Big(\{x^{n'}\}^{t-(n-n')c_{\sup}}\times\prod_{i=n'+1}^{n}\{x_{i}\}^{c_{\sup}}\Big)\label{eq:-33}\\
 & \supseteq\bigcup_{x^{n}\in A\cap C_{\mathcal{J}^{*}}}\Big(\{x^{n'}\}^{t-(n-n')c_{\sup}}\times\mathcal{Y}^{n-n'}\Big)\label{eq:-34}\\
 & =\bigcup_{x^{n'}\in A'\cap B^{n'}}\Big(\{x^{n'}\}^{t-(n-n')c_{\sup}}\times\mathcal{Y}^{n-n'}\Big)\nonumber \\
 & =(A'\cap B^{n'})^{t-(n-n')c_{\sup}}\times\mathcal{Y}^{n-n'},
\end{align}
where  
\begin{itemize}
\item \eqref{eq:-33} follows since in the enlargement operation, $\sum_{i=1}^{n}c(x_{i},y_{i})\le t$
is relaxed to $\sum_{i=1}^{n'}c(x_{i},y_{i})\le t-(n-n')c_{\sup}$
and $c(x_{i},y_{i})\le c_{\sup}$ for $i\in[n'+1:n]$; 
\item \eqref{eq:-34} follows since $\{x\}^{c_{\sup}}=\mathcal{Y}$ for
any $x$.  
\end{itemize}
Denoting $\tilde{A}:=A'\cap B^{n'}$ and summarizing the above, it
holds that 
\begin{align*}
P_{X}^{\otimes n'}(\tilde{A}) & \ge e^{-n(\alpha+\delta+H(\epsilon'))},\\
P_{Y}^{\otimes n}(A^{t}) & \ge P_{Y}^{\otimes n'}(\tilde{A}^{t-(n-n')c_{\sup}})\ge P_{Y}^{\otimes n'}(\tilde{A}^{t-n\epsilon'c_{\sup}}).
\end{align*}
Setting $t=n\tau$, we then have that 
\begin{align*}
P_{Y}^{\otimes n}(A^{n\tau}) & \ge\inf_{\tilde{A}\subseteq B^{n'}:P_{X}^{\otimes n'}(\tilde{A})\ge e^{-n'\alpha'}}P_{Y}^{\otimes n'}(\tilde{A}^{n'\tau'}),
\end{align*}
where $\tau':=\tau-\epsilon'c_{\sup}$ and $\alpha':=\frac{\alpha+\delta+H(\epsilon')}{1-\epsilon'}$.
That is, 
\begin{align*}
E_{0}^{(n)}(\alpha,\tau|P_{X}) & \le E_{0}^{(n')}(\alpha'+\log P_{X}(B),\tau'|P_{X}(\cdot|B)).
\end{align*}
Since $B$ is compact, applying the upper bound on the isoperimetric
exponent for compact $\mathcal{X}$ (proven in Section \ref{subsec:Compact}),
we obtain that 
\begin{align*}
 & \limsup_{n\to\infty}E_{0}^{(n)}(\alpha,\tau|P_{X})\\
 & \le\limsup_{n\to\infty}E_{0}^{(n')}(\alpha'+\log P_{X}(B),\tau'|P_{X}(\cdot|B))\\
 & \leq\lim_{\tau''\uparrow\tau'}\psi(\alpha'+\log P_{X}(B),\tau''|P_{X}(\cdot|B)).
\end{align*}

Observe that 
\begin{align*}
 & \psi(\alpha'+\log P_{X}(B),\tau''|P_{X}(\cdot|B))\\
 & =\sup_{Q_{XW}:D(Q_{X|W}\|P_{X}(\cdot|B)|Q_{W})\le\alpha'+\log P_{X}(B)}\\
 & \qquad\qquad\inf_{Q_{Y|XW}:\mathbb{E}[c(X,Y)]\le\tau''}D(Q_{Y|W}\|P_{Y}|Q_{W})\\
 & \le\sup_{Q_{XW}:D(Q_{X|W}\|P_{X}|Q_{W})\le\alpha'}\\
 & \qquad\inf_{Q_{Y|XW}:\mathbb{E}[c(X,Y)]\le\tau''}D(Q_{Y|W}\|P_{Y}|Q_{W})\\
 & \le\psi(\alpha',\tau''|P_{X}),
\end{align*}
Therefore, 
\begin{align*}
\limsup_{n\to\infty}E_{0}^{(n)}(\alpha,\tau|P_{X}) & \le\lim_{\tau''\uparrow\tau'}\psi(\alpha',\tau''|P_{X}).
\end{align*}
Letting $\epsilon'\downarrow0$ first and $\delta\downarrow0$ then,
we obtain that 
\begin{align*}
\limsup_{n\to\infty}E_{0}^{(n)}(\alpha,\tau|P_{X}) & \le\lim_{\alpha''\downarrow\alpha}\lim_{\tau''\uparrow\tau}\psi(\alpha'',\tau''|P_{X}).
\end{align*}

\subsection{Statement 3 }

The proof of the lower bound is based on the large deviations theory,
which is similar to that of Statement 2 of Theorem \ref{thm:LD} given
in Section \ref{sec:Proof-of-Theorem-1}.

Let $\epsilon>0$ and $m\ge2$. Let $Q_{WX}$ be such that $|\supp(Q_{W})|\le m$
and $D(Q_{X|W}\|P_{X}|Q_{W})\le\alpha-\epsilon$. Without loss of
generality, we assume $\supp(Q_{W})=[m]$, under which the function
$\psi$ does not change by Theorem \ref{thm:The-alphabet-size}. For
each $n$, let $Q_{W}^{(n)}$ be an empirical measure of an $n$-length
sequence (i.e., $n$-type) such that $\supp(Q_{W}^{(n)})\subseteq[m]$
and $Q_{W}^{(n)}\to Q_{W}$ as $n\to\infty$. Let $Q_{XW}^{(n)}=Q_{W}^{(n)}Q_{X|W}$.
Let $w^{n}=(1,\cdots,1,2,\cdots,2,\cdots,m,\cdots,m)$ be an $n$-length
sequence, where $i$ appears $n_{i}:=nQ_{W}^{(n)}(i)$ times. Hence,
the empirical measure of $w^{n}$ is $Q_{W}^{(n)}$.

Let $\epsilon'>0$. We now choose $A$ as the conditional empirically
$\epsilon'$-typical sets. That is, $A=\L_{n}^{-1}(\mathcal{A}|w^{n})=\prod_{w=1}^{m}\L_{n_{w}}^{-1}(\mathcal{A}_{w})$,
where $\mathcal{A}_{w}:=B_{\epsilon']}(Q_{X|W=w})$ for $w\in[m]$,
and $\mathcal{A}:=\{R_{X|W}:R_{X|W=w}\in\mathcal{A}_{w},\forall w\in[m]\}$.
As shown in Section \ref{sec:Proof-of-Theorem-1}, $A$ is closed
in $\mathcal{X}^{n}$, and $-\frac{1}{n}\log P_{X}^{\otimes n}(A)\le\alpha$
for all sufficiently large $n$.

Denote $t=n\tau$. Observe that 
\begin{align*}
A^{t} & =\{y^{n}:\exists x^{n},\,\L_{x^{n}|w^{n}}\in\mathcal{A},\,c_{n}(x^{n},y^{n})\le t\}\\
 & =\{y^{n}:\exists x^{n},\,\L_{x^{n}|w^{n}}\in\mathcal{A},\,\mathbb{E}_{\L_{x^{n},y^{n},w^{n}}}c(X,Y)\le\tau\}\\
 & \subseteq\{y^{n}:\exists x^{n},\,\L_{x^{n}|w^{n}}\in\mathcal{A},\,\C(\L_{x^{n}|w^{n}},\L_{y^{n}|w^{n}}|\L_{w^{n}})\le\tau\}\\
 & \subseteq\{y^{n}:\exists R_{X|W}\in\mathcal{A},\,\C(R_{X|W},\L_{y^{n}|w^{n}}|Q_{W}^{(n)})\le\tau\}.
\end{align*}
Hence, we have $A^{t}\subseteq\L_{n}^{-1}(\mathcal{B}|w^{n}),$ where
\begin{equation}
\mathcal{B}=\{R_{Y|W}:\C(R_{X|W},R_{Y|W}|Q_{W}^{(n)})\le\tau,\exists R_{X|W}\in\mathcal{A}\}.\label{eq:-33-2}
\end{equation}

By a conditional version of Sanov's theorem, 
\begin{align}
E & :=\liminf_{n\to\infty}-\frac{1}{n}\log P_{Y}^{\otimes n}(\L_{n}^{-1}(\mathcal{B}|w^{n}))\nonumber \\
 & \ge\inf_{R_{WY}\in\overline{\mathcal{B}'}}D(R_{YW}\|P_{Y}\otimes Q_{W}),\label{eq:-18-1}
\end{align}
where $\mathcal{B}':=\{R_{WY}:R_{W}\in B_{\epsilon']}(Q_{W}),R_{Y|W}\in\mathcal{B}\}$.
To simplify this lower bound, denoting 
\[
\hat{\mathcal{B}}:=\{R_{Y|W}:\C(Q_{X|W},R_{Y|W}|Q_{W})\le\tau+2\epsilon''\},
\]
we have the following lemma. 
\begin{lem}
\label{lem:For-sufficiently-small}For sufficiently small $\epsilon'$,
it holds that 
\begin{equation}
\mathcal{B}'\subseteq\hat{\mathcal{B}}':=\{R_{WY}:R_{W}\in B_{\epsilon']}(Q_{W}),R_{Y|W}\in\hat{\mathcal{B}}\},\label{eq:-36-3}
\end{equation}
and $\hat{\mathcal{B}}'$ is closed (in the weak topology). 
\end{lem}
\begin{IEEEproof}[Proof of Lemma \ref{lem:For-sufficiently-small}]
By Assumption 1, for any $R_{Y|W}$, it holds that given $\epsilon''>0$,
for sufficiently small $\epsilon'$, 
\[
\inf_{R_{X|W}\in A}\C(R_{X|W},R_{Y|W}|Q_{W}^{(n)})\geq\C(Q_{X|W},R_{Y|W}|Q_{W}^{(n)})-\epsilon''.
\]
Note that the minimization in the conditional optimal transport can
be taken in a pointwise way for each condition $W=w$. Combining this
with the condition that $c$ is bounded, we have that $R_{W}\mapsto\C(R_{X|W},R_{Y|W}|R_{W})$
is continuous. So, given $\epsilon''>0$, for sufficiently large $n$,
\[
\C(Q_{X|W},R_{Y|W}|Q_{W}^{(n)})\ge\C(Q_{X|W},R_{Y|W}|Q_{W})-\epsilon''.
\]
This implies that given $\epsilon''$, for sufficiently small $\epsilon'$,
$\mathcal{B}\subseteq\hat{\mathcal{B}}$. Hence, $\mathcal{B}'\subseteq\hat{\mathcal{B}}'$.

We next prove that for sufficiently small $\epsilon$, $\hat{\mathcal{B}}'$
is closed. Let $(R_{WY}^{(k)})$ be an arbitrary sequence drawn from
$\hat{\mathcal{B}}'$, which converges to $R_{WY}^{*}$ (under the
weak topology). Obviously, $R_{W}^{(k)}\to R_{W}^{*}=Q_{W}$ and $R_{Y|W=w}^{(k)}\to R_{Y|W=w}^{*}$
for each $w$. By the lower semi-continuity of $R_{Y}\mapsto\C(R_{X},R_{Y})$,
we have that 
\begin{align*}
\liminf_{k\to\infty}\C(Q_{X|W=w},R_{Y|W=w}^{(k)}) & \ge\C(Q_{X|W=w},R_{Y|W=w}^{*}).
\end{align*}
Hence, 
\begin{align*}
\liminf_{k\to\infty}\C(Q_{X|W},R_{Y|W}^{(k)}|Q_{W}) & \ge\C(Q_{X|W},R_{Y|W}^{*}|Q_{W}).
\end{align*}
On the other hand, by the choice of $(R_{WY}^{(k)})$, $\C(Q_{X|W},R_{Y|W}^{(k)}|Q_{W})\le\tau+2\epsilon''$.
Hence, $\C(Q_{X|W},R_{Y|W}^{*}|Q_{W})\le\tau+2\epsilon''$. That is,
$R_{WY}^{*}\in\hat{\mathcal{B}}'$. Hence, $\hat{\mathcal{B}}'$ is
closed. This completes the proof of Lemma \ref{lem:For-sufficiently-small}. 
\end{IEEEproof}
By Lemma \ref{lem:For-sufficiently-small} and \eqref{eq:-18-1},
\begin{align*}
E & \ge\inf_{R_{WY}\in\hat{\mathcal{B}}'}D(R_{YW}\|P_{Y}\otimes Q_{W})\\
 & =\inf_{R_{WY}:R_{W}\in B_{\epsilon']}(Q_{W}),R_{Y|W}\in\hat{\mathcal{B}}}D(R_{Y|W}\|P_{Y}|R_{W})+D(R_{W}\|Q_{W}).
\end{align*}
Letting $\epsilon'\downarrow0$ and by the continuity of $R_{W}\in\mathcal{P}([m])\mapsto D(R_{W}\|Q_{W})$,
the second term above can be removed: 
\begin{align*}
E & \ge\beta:=\lim_{\epsilon'\downarrow0}\inf_{\substack{R_{W}\in B_{\epsilon']}(Q_{W}),R_{Y|W}:\\
\C(Q_{X|W},R_{Y|W}|Q_{W})\le\tau+2\epsilon''
}
}D(R_{Y|W}\|P_{Y}|Q_{W}).
\end{align*}

Let $(R_{W}^{(k)},R_{Y|W}^{(k)})$ be such that 
\begin{align*}
 & R_{W}^{(k)}\in B_{\frac{1}{k}]}(Q_{W}),\\
 & \C(Q_{X|W},R_{Y|W}^{(k)}|Q_{W})\le\tau+2\epsilon'',\\
 & D(R_{Y|W}^{(k)}\|P_{Y}|Q_{W})\le\beta+\frac{1}{k}.
\end{align*}
Since $R_{W}^{(k)}$ is in the probability simplex, by passing to
a subsequence, we assume $R_{W}^{(k)}\to Q_{W}$. Since sublevel sets
of the relative entropy $R_{Y}\mapsto D(R_{Y}\|P_{Y})$ are compact,
by the fact that for each $w$, $D(R_{Y|W=w}\|P_{Y})$ is finite,
passing to a subsequence, we have $R_{Y|W=w}^{(k)}\to R_{Y|W=w}^{*}$.
By the lower semi-continuity of the relative entropy and the optimal
transport cost functional, we have 
\begin{align*}
\liminf_{k\to\infty}D(R_{Y|W}^{(k)}\|P_{Y}|Q_{W}) & \ge D(R_{Y|W}^{*}\|P_{Y}|Q_{W}),\\
\liminf_{k\to\infty}\C(Q_{X|W},R_{Y|W}^{(k)}|Q_{W}) & \ge\C(Q_{X|W},R_{Y|W}^{*}|Q_{W}).
\end{align*}
Hence, $R_{Y|W}^{*}$ satisfies that 
\begin{align*}
\C(Q_{X|W},R_{Y|W}^{*}|Q_{W}) & \le\tau+2\epsilon''\\
D(R_{Y|W}^{*}\|P_{Y}|Q_{W}) & \le\beta.
\end{align*}
Therefore, $E\ge g(\tau+2\epsilon'',Q_{XW})$, where 
\begin{align*}
g(t,Q_{XW}) & :=\inf_{Q_{Y|W}:\C(Q_{X|W},Q_{Y|W}|Q_{W})\le t}D(Q_{Y|W}\|P_{Y}|Q_{W})\\
 & =\inf_{Q_{Y|XW}:\mathbb{E}[c(X,Y)]\le t}D(Q_{Y|W}\|P_{Y}|Q_{W}).
\end{align*}

Since $Q_{XW}$ is arbitrary distribution on $\mathcal{X}\times\mathcal{W}$
satisfying $D(Q_{X|W}\|P_{X}|Q_{W})\le\alpha-\epsilon$, taking supremum
over all such distributions, we obtain 
\begin{align*}
 & \liminf_{n\to\infty}E_{0}^{(n)}(\alpha,\tau)\\
 & \ge\sup_{Q_{XW}:D(Q_{X|W}\|P_{X}|Q_{W})\le\alpha-\epsilon}g(\tau+2\epsilon'',Q_{XW})\\
 & =\psi(\alpha-\epsilon,\tau+2\epsilon'').
\end{align*}
Letting $\epsilon\downarrow0$ and $\epsilon''\downarrow0$, we obtain
\begin{align*}
\liminf_{n\to\infty}E_{0}^{(n)}(\alpha,\tau) & \ge\lim_{\alpha'\uparrow\alpha}\lim_{\tau'\downarrow\tau}\psi(\alpha',\tau')\\
 & =\psi(\alpha,\tau),
\end{align*}
where the last line will be proven in Corollary \ref{cor:uppersemicontinuity}.

\section{Proofs of Dual Formulas }

\label{sec:dual}

It is well known that the OT cost admits the following duality. 
\begin{lem}[Kantorovich Duality]
\cite[Theorem 5.10]{villani2008optimal} \label{lem:Kantorovich}
Let $\mathcal{X}$ and $\mathcal{Y}$ be Polish spaces. It holds that
\begin{align}
\C(Q_{X},Q_{Y}) & =\sup_{\substack{(f,g)\in C_{\mathrm{b}}(\mathcal{X})\times C_{\mathrm{b}}(\mathcal{Y}):\\
f+g\leq c
}
}Q_{X}(f)+Q_{Y}(g),\label{eq:dual}
\end{align}
where $C_{\mathrm{b}}(\mathcal{X})$ denotes the collection of bounded
continuous functions $f:\mathcal{X}\to\mathbb{R}$. 
\end{lem}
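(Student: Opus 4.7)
I would prove the identity in two stages: an easy weak-duality inequality, and the reverse strong-duality inequality via Fenchel--Rockafellar convex duality in $C_{\mathrm{b}}(\mathcal{X}\times\mathcal{Y})$, followed by a monotone approximation to handle the fact that $c$ is merely lower semi-continuous and possibly unbounded.

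For \emph{weak duality}, take any $\pi\in\Pi(Q_X,Q_Y)$ and any $(f,g)\in C_{\mathrm{b}}(\mathcal{X})\times C_{\mathrm{b}}(\mathcal{Y})$ with $f+g\leq c$. Integrating the pointwise inequality against $\pi$ gives $\int c\,d\pi\geq Q_X(f)+Q_Y(g)$; infimizing in $\pi$ and supremizing in $(f,g)$ yields $C(Q_X,Q_Y)\geq\sup\{Q_X(f)+Q_Y(g):f+g\leq c\}$.

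For \emph{strong duality}, I would first assume $c\in C_{\mathrm{b}}(\mathcal{X}\times\mathcal{Y})$. On the Banach space $E=C_{\mathrm{b}}(\mathcal{X}\times\mathcal{Y})$ with the sup norm, introduce two convex functionals: $\Theta(u)=0$ when $u\leq c$ and $+\infty$ otherwise, and $\Xi(u)=-Q_X(f)-Q_Y(g)$ when $u$ decomposes as a tensor sum $u(x,y)=f(x)+g(y)$ with $(f,g)\in C_{\mathrm{b}}(\mathcal{X})\times C_{\mathrm{b}}(\mathcal{Y})$ and $+\infty$ otherwise. Then the dual value in \eqref{eq:dual} equals $-\inf_{u\in E}(\Theta+\Xi)(u)$. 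The Fenchel--Rockafellar theorem applies since $\Theta$ is continuous at $u\equiv -M$ for $M>\|c\|_{\infty}$, a point where $\Xi$ is finite. Identifying $E^{*}$ with finite Radon measures via Riesz representation, a routine computation gives $\Theta^{*}(-\mu)=\int c\,d\mu$ for nonnegative $\mu$ (and $+\infty$ otherwise) while $\Xi^{*}(\mu)=0$ exactly when $\mu$ has marginals $(Q_X,Q_Y)$. The duality theorem thus collapses the dual value to $\inf_{\pi\in\Pi(Q_X,Q_Y)}\int c\,d\pi=C(Q_X,Q_Y)$.

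To lift to general l.s.c.\ $c\geq 0$, I would approximate $c$ from below by $c_k\in C_{\mathrm{b}}$ with $c_k\uparrow c$ (available on a Polish space by standard constructions). The bounded case yields equality for each $c_k$; letting $k\to\infty$, the dual side is monotone in $c$, and on the primal side, weak compactness of $\Pi(Q_X,Q_Y)$ (Prokhorov's theorem) combined with monotone convergence gives $\int c_k\,d\pi_k\to\int c\,d\pi^{*}$ along a weakly convergent subsequence, so both sides pass to the same limit. The main obstacle I foresee is the identification of $\Xi^{*}$ as the indicator of the marginal-constraint set: this requires showing that any $\mu\in E^{*}$ which pairs to $Q_X(f)+Q_Y(g)$ on every tensor-sum $u(x,y)=f(x)+g(y)$ is a nonnegative measure with the prescribed marginals, which rests on the duality between $C_{\mathrm{b}}(\mathcal{Z})$ and tight Radon probability measures on a Polish space and the tensor-product structure of $\mathcal{X}\times\mathcal{Y}$.
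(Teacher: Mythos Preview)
The paper does not give its own proof of this lemma; it is quoted verbatim from \cite[Theorem 5.10]{villani2008optimal} and used as a black box. So there is no in-paper argument to compare your proposal against.

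Your sketch is essentially the standard Fenchel--Rockafellar route that Villani himself takes (cf.\ \cite[Theorem 1.3]{villani2003topics}), so in spirit you are reproducing the cited proof. Two small points are worth tightening. First, there is a sign slip: with your choice of $\Theta$, the conjugate that equals $\int c\,d\mu$ for $\mu\ge 0$ is $\Theta^{*}(\mu)$, not $\Theta^{*}(-\mu)$; correspondingly, $\Xi^{*}(-\mu)$ is the indicator of the marginal constraint. Second, the obstacle you flag is real but slightly mis-diagnosed: the dual of $C_{\mathrm{b}}(\mathcal{X}\times\mathcal{Y})$ on a non-compact Polish space is the space of finitely additive regular set functions, not Radon measures. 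The actual work is to show that any $\ell\in C_{\mathrm{b}}^{*}$ satisfying the marginal identity $\ell(f\oplus g)=Q_X(f)+Q_Y(g)$ for all $f,g$ is forced to be countably additive (hence a genuine probability measure), because its marginals $Q_X,Q_Y$ already are; this is exactly how Villani closes the gap. Your monotone approximation step from bounded continuous $c_k\uparrow c$ to general l.s.c.\ $c$ is correct as stated.
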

We also need the following duality for the I-projection, which is
well-known if the space is Polish since both sides in \eqref{eq:-47}
correspond to the same large deviation exponent. 
\begin{lem}[Duality for the I-Projection]
\label{lem:DE} Let $f:\mathcal{X}\to\mathbb{R}$ be a measurable
bounded above function. Then, it holds that for any real $\tau$,
\begin{equation}
\inf_{Q:Q(f)\ge\tau}D(Q\|P)=\sup_{\lambda\ge0}\lambda\tau-\log P(e^{\lambda f}),\label{eq:-47}
\end{equation}
and for any real $\alpha\ge0$, 
\begin{equation}
\sup_{Q:D(Q\|P)\le\alpha}Q(f)=\inf_{\eta>0}\eta\alpha+\eta\log P(e^{(1/\eta)f}).\label{eq:-53}
\end{equation}
The $\sup_{\lambda\ge0}$ in \eqref{eq:-47} can be replaced by $\sup_{\lambda>0}$. 
\end{lem}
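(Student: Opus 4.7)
Both identities are Legendre--Fenchel dualities for the relative entropy, and the natural tool is the Donsker--Varadhan variational formula
\[
\log P(e^{h}) \;=\; \sup_{Q}\bigl\{\,Q(h) - D(Q\|P)\,\bigr\},
\]
valid for any measurable $h$ that is bounded above (so that the integrals are well defined in $[-\infty,+\infty)$). Equivalently, for any $Q\ll P$ and any $\lambda\ge0$,
\[
\lambda Q(f) \;\le\; D(Q\|P) + \log P(e^{\lambda f}). \qquad (\star)
\]
I would split the proof of each equation into an ``easy'' direction obtained by rearranging $(\star)$, and a ``hard'' direction realized by an explicit tilted distribution of exponential form.

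\textbf{Easy direction.} For \eqref{eq:-47}, fix any feasible $Q$ (so $Q(f)\ge\tau$) and any $\lambda\ge0$; then $(\star)$ gives $D(Q\|P)\ge \lambda Q(f)-\log P(e^{\lambda f})\ge \lambda\tau -\log P(e^{\lambda f})$, and taking the infimum in $Q$ and supremum in $\lambda$ yields ``$\ge$''. For \eqref{eq:-53}, apply $(\star)$ with $\lambda=1/\eta$ to any $Q$ with $D(Q\|P)\le\alpha$ to get $Q(f)\le \eta D(Q\|P)+\eta\log P(e^{f/\eta})\le \eta\alpha+\eta\log P(e^{f/\eta})$, then take the supremum in $Q$ and infimum in $\eta>0$.

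\textbf{Hard direction via exponential tilting.} For fixed $\lambda>0$ with $P(e^{\lambda f})<\infty$, introduce the tilted measure $\mathrm{d}Q_{\lambda}=e^{\lambda f}/P(e^{\lambda f})\,\mathrm{d}P$. A direct computation gives
\[
D(Q_{\lambda}\|P) \;=\; \lambda Q_{\lambda}(f) - \log P(e^{\lambda f}),
\]
so that $Q_{\lambda}$ makes $(\star)$ an equality. For \eqref{eq:-47}, I would pick $\lambda$ so that $Q_{\lambda}(f)=\tau$ (when possible): the map $\lambda\mapsto Q_{\lambda}(f)$ is nondecreasing, continuous where finite, and ranges over an interval reaching up to $\mathrm{esssup}_{P} f$, so for $\tau$ in the interior of this range such a $\lambda$ exists and gives $D(Q_{\lambda}\|P)=\lambda\tau-\log P(e^{\lambda f})$, establishing ``$\le$''. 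For \eqref{eq:-53}, I would instead pick $\lambda=1/\eta$ so that $D(Q_{\lambda}\|P)=\alpha$, whence $Q_{\lambda}(f)=\eta\alpha+\eta\log P(e^{f/\eta})$.

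\textbf{Boundary cases and main obstacle.} The main subtlety is what happens when the optimal tilt does not exist: $\tau$ at or above $\mathrm{esssup}_{P}f$, or $\alpha$ strictly larger than $\sup_{Q\ll P}D(Q\|P)$ (possibly infinite), or $P(e^{\lambda f})=\infty$ for every $\lambda>0$ above some threshold. I would handle these by approximation: truncate $f$ from below by $f_{N}:=f\vee(-N)$ (which keeps it bounded and preserves boundedness from above), apply the finite case to $f_{N}$, and pass to the limit using monotone convergence for $P(e^{\lambda f_{N}})$ and lower semicontinuity of $D(\cdot\|P)$ under weak limits of $Q_{\lambda,N}$. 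The edge case $\tau=\mathrm{esssup}_{P}f$ is settled by choosing $Q$ concentrated on $\{f\ge \tau-\epsilon\}$ and letting $\epsilon\downarrow0$, giving a divergent $\lambda\tau-\log P(e^{\lambda f})$ as $\lambda\to\infty$ that matches the $+\infty$ value of the relative-entropy infimum when appropriate. Finally, the last sentence (replacing $\sup_{\lambda\ge0}$ with $\sup_{\lambda>0}$) follows by noting that the $\lambda=0$ value is $0$, which never strictly beats the values for small $\lambda>0$ unless the problem is trivial (in which case both sides are $0$).
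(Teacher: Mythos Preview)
Your approach is correct and essentially the same as the paper's: the paper states that the lemma is a direct consequence of Csisz\'ar's identity $D(Q\|P)-D(Q_{\lambda}\|P)=D(Q\|Q_{\lambda})+\lambda(Q(f)-Q_{\lambda}(f))$ for the same tilted measure $\mathrm{d}Q_{\lambda}=e^{\lambda f}\,\mathrm{d}P/P(e^{\lambda f})$, which is precisely your Donsker--Varadhan inequality $(\star)$ together with the equality case. Your treatment is in fact more careful than the paper's one-line proof, since you address the boundary cases (non-existence of the optimal tilt, $\tau$ at $\mathrm{esssup}_{P}f$, etc.) that the paper only alludes to in the remark following the lemma.
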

This lemma is a direct consequence of the following lemma. The following
lemma can be easily verified by definition. 
\begin{lem}
\cite{csiszar1975divergence} \label{lem:identity} For a measurable
bounded above function $f:\mathcal{X}\to\mathbb{R}$ and $\lambda\ge0$,
define a probability measure $Q_{\lambda}$ with density 
\[
\frac{\mathrm{d}Q_{\lambda}}{\mathrm{d}P}=\frac{e^{\lambda f}}{P(e^{\lambda f})},
\]
then 
\begin{align}
 & D(Q\|P)-D(Q_{\lambda}\|P)\nonumber \\
 & =D(Q\|Q_{\lambda})+\lambda\left(Q(f)-Q_{\lambda}(f)\right)\\
 & \geq\lambda\left(Q(f)-Q_{\lambda}(f)\right).\label{eq:-55}
\end{align}
\end{lem}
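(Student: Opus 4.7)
The plan is to verify the identity directly from the definition of the tilted measure $Q_{\lambda}$ by computing the two relative entropies on the right-hand side explicitly and then cancelling terms, after which the claimed inequality falls out from nonnegativity of $D(Q\|Q_{\lambda})$. Concretely, first I would record that because $f$ is bounded above and $\lambda\ge 0$, the quantity $Z_{\lambda}:=P(e^{\lambda f})\in(0,\infty]$ is well-defined; under the convention of the paper that constraint/objective integrals are tacitly required to be finite, I can work in the regime $Z_{\lambda}<\infty$ (and if $Z_{\lambda}=\infty$, the statement is interpreted as vacuous since $Q_{\lambda}$ is not defined). I would then write $\log(\mathrm{d}Q_{\lambda}/\mathrm{d}P)=\lambda f-\log Z_{\lambda}$, which gives immediately
\begin{equation}
D(Q_{\lambda}\|P)=\lambda\,Q_{\lambda}(f)-\log Z_{\lambda}. \label{eq:plan-1}
\end{equation}

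Next, I would handle the term $D(Q\|P)$. If $Q\not\ll P$ then $Q\not\ll Q_{\lambda}$ either (since $P$ and $Q_{\lambda}$ are mutually absolutely continuous, as $e^{\lambda f}>0$), so both $D(Q\|P)$ and $D(Q\|Q_{\lambda})$ are $+\infty$ and the claimed identity reduces to $\infty=\infty$ after passing $\lambda(Q_{\lambda}(f)-Q(f))$ to the other side (where, by the Convention of the paper, finiteness of the integrals is tacitly assumed, so this degenerate branch need not be dwelt on). In the non-degenerate case $Q\ll P\ll Q_{\lambda}$, I would apply the chain rule for Radon--Nikodym derivatives:
\begin{equation}
\log\frac{\mathrm{d}Q}{\mathrm{d}Q_{\lambda}}=\log\frac{\mathrm{d}Q}{\mathrm{d}P}-\log\frac{\mathrm{d}Q_{\lambda}}{\mathrm{d}P}=\log\frac{\mathrm{d}Q}{\mathrm{d}P}-\lambda f+\log Z_{\lambda}, \label{eq:plan-2}
\end{equation}
and integrate against $Q$ to obtain $D(Q\|Q_{\lambda})=D(Q\|P)-\lambda Q(f)+\log Z_{\lambda}$, which is the second ingredient.

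Combining this rearrangement with \eqref{eq:plan-1} gives
\[
D(Q\|P)-D(Q_{\lambda}\|P)=\bigl(D(Q\|Q_{\lambda})+\lambda Q(f)-\log Z_{\lambda}\bigr)-\bigl(\lambda Q_{\lambda}(f)-\log Z_{\lambda}\bigr)=D(Q\|Q_{\lambda})+\lambda(Q(f)-Q_{\lambda}(f)),
\]
which is the claimed identity. The inequality then follows at once from $D(Q\|Q_{\lambda})\ge 0$, a standard consequence of Jensen. The only real care required is measure-theoretic: I have to justify that the expectations $Q(f)$ and $Q_{\lambda}(f)$ are well-defined (which uses $f$ bounded above, so the positive part $f^{+}$ is bounded and the negative part is controlled by $Q(e^{\lambda f})<\infty$ for $Q_{\lambda}$, and by the tacit finiteness convention for $Q$), and that the cancellation $\log Z_{\lambda}-\log Z_{\lambda}$ is a cancellation of finite quantities rather than $\infty-\infty$. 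These are the only genuinely delicate points; once one restricts to the non-degenerate regime where all integrals appearing are finite, the derivation is a direct computation.
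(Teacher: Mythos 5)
Your proposal is correct and matches the paper's intent: the paper offers no written proof beyond the remark that the lemma ``can be easily verified by definition,'' and your direct computation of $D(Q_\lambda\|P)=\lambda Q_\lambda(f)-\log Z_\lambda$ together with the Radon--Nikodym chain rule $D(Q\|Q_\lambda)=D(Q\|P)-\lambda Q(f)+\log Z_\lambda$ is exactly that verification, with the inequality following from $D(Q\|Q_\lambda)\ge 0$. Your attention to the finiteness of $Z_\lambda$ and the mutual absolute continuity of $P$ and $Q_\lambda$ is consistent with the paper's own remark following the lemma that $P(e^{\lambda f})$ must be finite.
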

The function $f$ in Lemmas \ref{lem:DE} and \ref{lem:identity}
can be assumed to be unbounded, but $P(e^{\lambda f})$ should be
finite for Lemma \ref{lem:identity}, $P(e^{\lambda f})$ should be
finite for $\lambda\ge0$ such that $Q_{\lambda}(f)=\tau$ for \eqref{eq:-47},
and $P(e^{(1/\eta)f})$ should be finite for $\eta>0$ such that $D(Q_{1/\eta}\|P)=\alpha$
for \eqref{eq:-53},

The conditional version of Lemma \ref{lem:DE} is as follows, which
can be proven similarly to the unconditional version. 
\begin{lem}
\label{lem:conditionalDE} Let $\mathcal{W}$ be a finite set and
$f:\mathcal{X}\times\mathcal{W}\to\mathbb{R}$ be a measurable bounded
above function. Let $P_{W}$ be a probability measure on $\mathcal{W}$.
Then, for any real $\tau$, it holds that 
\begin{align*}
 & \inf_{Q_{X|W}:P_{W}Q_{X|W}(f)\ge\tau}D(Q_{X|W}\|P_{X|W}|P_{W})\\
 & =\sup_{\lambda\ge0}\lambda\tau-P_{W}(\log P_{X|W}(e^{\lambda f})),
\end{align*}
and for any real $\alpha\ge0$, it holds that 
\begin{align*}
 & \sup_{Q_{X|W}:D(Q_{X|W}\|P_{X|W}|Q_{W})\le\alpha}P_{W}Q_{X|W}(f)\\
 & =\inf_{\eta>0}\eta\alpha+\eta P_{W}(\log P_{X|W}(e^{(1/\eta)f})).
\end{align*}
\end{lem}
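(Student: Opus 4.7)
My plan is to reduce both conditional identities to the unconditional Lemma \ref{lem:DE} applied pointwise in $w$, glued together by a single Lagrange multiplier. The key structural observation is that since $\mathcal{W}$ is finite, both the objective $D(Q_{X|W}\|P_{X|W}|P_W)=\sum_{w}P_W(w)D(Q_{X|W=w}\|P_{X|W=w})$ and the linear functional $P_WQ_{X|W}(f)=\sum_{w}P_W(w)Q_{X|W=w}(f(\cdot,w))$ decouple across $w$ with the same weights $P_W(w)$, while the scalar constraint couples them through a single dual variable.

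For the first identity, I would start from the Donsker-Varadhan inequality $D(Q\|P)\ge Q(g)-\log P(e^{g})$, which is an immediate rearrangement of Lemma \ref{lem:identity}. Applying it for each $w$ with $g=\lambda f(\cdot,w)$ for $\lambda\ge0$, and averaging against $P_W$, I obtain
\[
D(Q_{X|W}\|P_{X|W}|P_W)\ge\lambda P_WQ_{X|W}(f)-P_W\bigl(\log P_{X|W}(e^{\lambda f})\bigr).
\]
Under the constraint $P_WQ_{X|W}(f)\ge\tau$, the right side is at least $\lambda\tau-P_W(\log P_{X|W}(e^{\lambda f}))$; taking sup over $\lambda\ge0$ and inf over feasible $Q_{X|W}$ yields the $\ge$ direction. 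For the matching $\le$ direction, I exhibit an explicit optimizer: for each $\lambda\ge0$, define the pointwise tilted kernel $Q^{(\lambda)}_{X|W=w}(dx)\propto e^{\lambda f(x,w)}P_{X|W=w}(dx)$. Lemma \ref{lem:identity} applied for each $w$ gives equality in the Donsker-Varadhan bound, and averaging over $P_W$ yields $D(Q^{(\lambda)}_{X|W}\|P_{X|W}|P_W)=\lambda P_WQ^{(\lambda)}_{X|W}(f)-P_W(\log P_{X|W}(e^{\lambda f}))$. Choosing $\lambda^{*}$ so that $P_WQ^{(\lambda^{*})}_{X|W}(f)=\tau$ then furnishes a feasible kernel attaining the dual value at $\lambda^{*}$.

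The second identity is handled by an analogous Lagrangian argument with a multiplier $\eta>0$ attached to the divergence constraint, now using the dual form of Donsker-Varadhan, $\sup_{Q}[Q(g)-D(Q\|P)]=\log P(e^{g})$, pointwise in $w$; equivalently, it can be read off from the first identity via Legendre duality in the parameter $\tau$.

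The main obstacle I foresee is justifying the existence of $\lambda^{*}$ (and, symmetrically, of $\eta^{*}$) realizing the matching condition. Monotonicity and continuity of the map $\lambda\mapsto P_WQ^{(\lambda)}_{X|W}(f)$ on the interior of the admissible $\lambda$-range, which follow from dominated convergence together with the bounded-above hypothesis on $f$, yield such $\lambda^{*}$ whenever $\tau$ lies in the interior of its range. Boundary and degenerate cases (where the log-moment $P_{X|W=w}(e^{\lambda f(\cdot,w)})$ blows up for some $w$, or where $\tau$ is unattained) are handled by passing to a one-sided limit along a maximizing sequence, or are rendered trivial as $+\infty=+\infty$ under Convention \ref{Convention-:-When}.
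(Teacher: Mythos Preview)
Your proposal is correct and matches the paper's intended argument: the paper does not give an explicit proof of Lemma~\ref{lem:conditionalDE} but simply introduces it as ``the conditional version of Lemma~\ref{lem:DE}'', and your pointwise-in-$w$ application of Lemma~\ref{lem:identity} with a single Lagrange multiplier is precisely the natural way to fill in that omission. Your handling of the existence of $\lambda^{*}$ via monotonicity/continuity of $\lambda\mapsto P_{W}Q^{(\lambda)}_{X|W}(f)$ and of the boundary cases is the standard completion of the argument.
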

Based on the duality lemmas above, we prove Theorem \ref{thm:phi},
Proposition \ref{prop:varphi}, Theorem \ref{thm:equivalence}, and
Theorem \ref{thm:psi}.

\begin{proof}[Proof of Theorem \ref{thm:phi}] By the definition
of $\phi_{\ge}$ and by the Kantorovich duality, 
\begin{align}
 & \phi_{\ge}(\alpha,\tau)\nonumber \\
 & =\inf_{\substack{Q_{X},Q_{Y},f,g:f+g\le c,\\
Q_{X}(f)+Q_{Y}(g)\ge\tau,\\
D(Q_{X}\|P_{X})\le\alpha
}
}D(Q_{Y}\|P_{Y})\\
 & =\inf_{\substack{Q_{X},f,g:f+g\le c,\\
D(Q_{X}\|P_{X})\le\alpha
}
}\inf_{Q_{Y}:Q_{X}(f)+Q_{Y}(g)\ge\tau}D(Q_{Y}\|P_{Y}).\label{eq:-27}
\end{align}

By Lemma \ref{lem:DE}, 
\begin{align}
\phi_{\ge}(\alpha,\tau) & =\inf_{f,g:f+g\le c,}\inf_{Q_{X}:D(Q_{X}\|P_{X})\le\alpha}\sup_{\lambda>0}\nonumber \\
 & \qquad\qquad\lambda(\tau-Q_{X}(f))-\log P_{Y}(e^{\lambda g}).\label{eq:-44}
\end{align}
The objective function in \eqref{eq:-44} is linear in $\lambda$
and also linear in $Q_{X}$, and moreover, $\{Q_{X}:D(Q_{X}\|P_{X})\le\alpha\}$
is compact. So, by the minimax theorem \cite[Theorem  2.10.2]{zalinescu2002convex},
the second infimization and the supremization can be swapped. Hence,
the inf-sup part in \eqref{eq:-27} is equal to 
\begin{align*}
 & \sup_{\lambda>0}\inf_{Q_{X}:D(Q_{X}\|P_{X})\le\alpha}\lambda(\tau-Q_{X}(f))-\log P_{Y}(e^{\lambda g}).
\end{align*}
which by Lemma \ref{lem:DE}, can be rewritten as 
\begin{align*}
 & \sup_{\lambda>0}\lambda(\tau-\inf_{\eta>0}(\eta\alpha+\eta\log P_{X}(e^{(1/\eta)f})))-\log P_{Y}(e^{\lambda g}).
\end{align*}
Substituting this into \eqref{eq:-44} completes the proof. \end{proof}
\begin{proof}[Proof of Proposition \ref{prop:varphi}] 
 By the Kantorovich--Rubinstein formula \cite[(5.11)]{villani2008optimal},
\begin{align}
 & \varphi_{X,\ge}(\tau)\nonumber \\
 & =\inf_{Q_{X},\textrm{1-Lip }f:P_{X}(f)=0,\,Q_{X}(f)\ge\tau}D(Q_{X}\|P_{X})\nonumber \\
 & =\inf_{\textrm{1-Lip }f:P_{X}(f)=0}\inf_{Q_{X}}\sup_{\lambda\ge0}D(Q_{X}\|P_{X})+\lambda(\tau-Q_{X}(f))\nonumber \\
 & =\inf_{\textrm{1-Lip }f:P_{X}(f)=0}\sup_{\lambda\ge0}\inf_{Q_{X}}D(Q_{X}\|P_{X})+\lambda(\tau-Q_{X}(f))\nonumber \\
 & =\inf_{\textrm{1-Lip }f:P_{X}(f)=0}\sup_{\lambda\ge0}\lambda\tau-\log P_{X}(e^{\lambda f}).\label{eq:-6-1}
\end{align}
\end{proof} \begin{proof}[Proof of Theorem \ref{thm:equivalence}]
It is easy to see that $\conv{\varphi}_{X,\ge}(\tau)=\conv{\varphi}_{X}(\tau)$.
If we swap the inf and sup in \eqref{eq:-46}, then we will obtain
$r(\tau)$. However, this is infeasible in general.

Obviously, from \eqref{eq:-46}, $\varphi_{X,\ge}(\tau)\ge r(\tau)$,
and by definition, $r(\tau)$ is convex. So, taking the lower convex
envelope, we obtain $\conv{\varphi}_{X,\ge}(\tau)\ge r(\tau)$. It
remains to prove $\conv{\varphi}_{X,\ge}(\tau)\le r(\tau)$. We next
do this.

By \cite[Theorem 3.10]{alon1998asymptotic}, given any $\tau\ge0$,
there is a $\lambda^{*}$ such that $r(\tau)=\lambda^{*}\tau-L_{G}(\lambda^{*})$.
Because the function $\lambda\mapsto\lambda\tau-L_{G}(\lambda)$ has
a maximum at $\lambda^{*}$, its right derivative at $\lambda^{*}$
is at most $0$, and its left derivative is at least $0$. In other
words, we have $L_{G}^{\mathrm{l}}(\lambda^{*})\le\tau\le L_{G}^{\mathrm{r}}(\lambda^{*})$.
Because $L_{G}^{\mathrm{r}}(\lambda^{*})\ge\tau$, there must be a
function $g:\mathcal{X}\to\mathbb{R}$ such that $L_{g}(\lambda^{*})=L_{G}(\lambda^{*})$
and $L_{g}'(\lambda^{*})\ge\tau$. Because $L_{G}^{\mathrm{l}}(\lambda^{*})\le\tau$,
there must be a function $h:\mathcal{X}\to\mathbb{R}$ such that $L_{h}(\lambda^{*})=L_{G}(\lambda^{*})$
and $L_{h}'(\lambda^{*})\le\tau$. Hence for any $\epsilon>0$, there
are positive integer $n$ and nonnegative integer $k$ such that $\left|\hat{\tau}-\tau\right|\le\epsilon,$
where 
\[
\hat{\tau}:=pL_{g}'(\lambda^{*})+(1-p)L_{h}'(\lambda^{*})
\]
and $p=\frac{k}{n}$.

Let $X^{n}\sim P_{X}^{\otimes n}$. Denote $f:\mathcal{X}^{n}\to\mathbb{R}$
by 
\[
f(x^{n})=\sum_{i=1}^{k}g(x_{i})+\sum_{i=k+1}^{n}h(x_{i}).
\]
Since $g,h$ are $1$-Lipschitz, so is $f$ (on the product space).
Then, for any $\lambda\ge0$, 
\[
L_{f}(\lambda)=kL_{g}(\lambda)+(n-k)L_{h}(\lambda).
\]
Then, 
\begin{align}
r(\tau) & =\lambda^{*}\tau-L_{G}(\lambda^{*})\nonumber \\
 & \le\lambda^{*}\hat{\tau}-\left(pL_{g}(\lambda^{*})+(1-p)L_{h}(\lambda^{*})\right)+\lambda^{*}\epsilon\nonumber \\
 & =\sup_{\lambda\ge0}\lambda\hat{\tau}-\left(pL_{g}(\lambda)+(1-p)L_{h}(\lambda)\right)+\lambda^{*}\epsilon\label{eq:-20}\\
 & =\sup_{\lambda\ge0}\lambda\hat{\tau}-\frac{1}{n}L_{f}(\lambda)+\lambda^{*}\epsilon\label{eq:-23}\\
 & \ge\inf_{\textrm{1-Lip }\hat{f}:P_{X}^{\otimes n}(\hat{f})=0}\sup_{\lambda\ge0}\lambda\hat{\tau}-\frac{1}{n}L_{\hat{f}}(\lambda)+\lambda^{*}\epsilon\nonumber \\
 & =\frac{1}{n}\varphi_{n}(n\hat{\tau})+\lambda^{*}\epsilon\label{eq:-24}\\
 & \ge\conv{\varphi}_{X,\ge}(\hat{\tau})+\lambda^{*}\epsilon,\label{eq:-25}
\end{align}
where 
\begin{itemize}
\item \eqref{eq:-20} follows since the objective function in it is strictly
convex in $\lambda$ and its derivative is zero at $\lambda^{*}$; 
\item $\varphi_{n}$ in \eqref{eq:-24} given by 
\[
\varphi_{n}(t)=\inf_{Q_{X^{n}}\in\mathcal{P}(\mathcal{X}^{n}):\C(P_{X}^{\otimes n},Q_{X^{n}})\ge t}D(Q_{X^{n}}\|P_{X}^{\otimes n})
\]
is the $n$-dimensional extension of $\varphi_{X,\ge}$, and \eqref{eq:-24}
follows by Proposition \ref{prop:varphi} for the $n$-dimensional
version $\varphi_{n}$; 
\item \eqref{eq:-25} follows the single-letterization argument same to
that used for \eqref{eq:-26}. 
\end{itemize}
Lastly, letting $\epsilon\to0$, we have $\hat{\tau}\to\tau$. By
the continuity of $\conv{\varphi}_{X,\ge}$ and \eqref{eq:-25}, we
have $r(\tau)\ge\conv{\varphi}_{X,\ge}(\tau)$.

\end{proof} \begin{proof}[Proof of Theorem \ref{thm:psi}] We
first give a dual formula for 
\[
\theta(\tau,Q_{XW}):=\inf_{Q_{Y|W}:\C(Q_{X|W},Q_{Y|W}|Q_{W})\le\tau}D(Q_{Y|W}\|P_{Y}|Q_{W}).
\]

Observe that 
\begin{align}
 & \theta(\tau,Q_{XW})\nonumber \\
 & =\inf_{Q_{Y|W}:\C(Q_{X|W},Q_{Y|W}|Q_{W})\le\tau}D(Q_{Y|W}\|P_{Y}|Q_{W})\\
 & =\inf_{Q_{Y|W}}\sup_{\lambda\ge0}D(Q_{Y|W}\|P_{Y}|Q_{W})\nonumber \\
 & \qquad+\lambda(\C(Q_{X|W},Q_{Y|W}|Q_{W})-\tau)\\
 & =\sup_{\lambda\ge0}\inf_{Q_{Y|W}}D(Q_{Y|W}\|P_{Y}|Q_{W})\nonumber \\
 & \qquad+\lambda(\mathbb{E}_{Q_{W}}[\C(Q_{X|W}(\cdot|W),Q_{Y|W}(\cdot|W))]-\tau)\label{eq:-42}\\
 & =\sup_{\lambda\ge0}\inf_{Q_{Y|W}}\mathbb{E}_{Q_{W}}[D(Q_{Y|W}(\cdot|W)\|P_{Y})\nonumber \\
 & \qquad+\lambda(\sup_{f+g\le c}Q_{X|W}(f|W)+Q_{Y|W}(g|W)-\tau)]\label{eq:-51}\\
 & =\sup_{\lambda\ge0}\sum_{w}Q_{W}(w)[\inf_{Q_{Y|W=w}}\sup_{f+g\le c}D(Q_{Y|W=w}\|P_{Y})\nonumber \\
 & \qquad+\lambda(Q_{X|W=w}(f)+Q_{Y|W=w}(g)-\tau)]\label{eq:-52}\\
 & =\sup_{\lambda\ge0}\sum_{w}Q_{W}(w)[\sup_{f+g\le c}\inf_{Q_{Y|W=w}}D(Q_{Y|W=w}\|P_{Y})\nonumber \\
 & \qquad+\lambda(Q_{X|W=w}(f)+Q_{Y|W=w}(g)-\tau)]\label{eq:-54}\\
 & =\sup_{\lambda\ge0}\sup_{f_{w}+g_{w}\le c,\forall w}\mathbb{E}_{Q_{W}}\big[\lambda(Q_{X|W}(f_{W})-\tau)\nonumber \\
 & \qquad-\log P_{Y}(e^{-\lambda g_{W}})\big],\label{eq:-56}
\end{align}
where 
\begin{itemize}
\item the inf and sup are swapped in \eqref{eq:-42} which follows by the
general minimax theorem \cite[Theorem 5.2.2]{nirenberg1974topics}
together with the convexity of the relative entropy and optimal transport
cost functional; 
\item \eqref{eq:-51} follows by the Kantorovich duality with $f,g$ denoting
bounded continuous functions; 
\item in \eqref{eq:-52} $\inf_{Q_{Y|W}}$ is taken in a pointwise way; 
\item the inf and sup are swapped in \eqref{eq:-54} which follows by the
general minimax theorem \cite[Theorem 5.2.2]{nirenberg1974topics}
by identifying that 1) the optimal value of the sup-inf in \eqref{eq:-54}
is finite (since upper bounded by $\lambda(\C(Q_{X|W=w},P_{Y})-\tau)$),
and 2) by choosing $f,g$ as zero functions, the objective function
turns to be $Q_{Y|W=w}\mapsto D(Q_{Y|W=w}\|P_{Y})-\lambda\tau$ whose
sublevels are compact under the weak topology; 
\item \eqref{eq:-56} follows by Lemma \ref{lem:identity} (and the supremum
over $f,g$ is moved outside of the expectation). 
\end{itemize}
Substituting the dual formula of $\theta$ to $\psi$, we obtain 
\begin{align}
 & \psi(\alpha,\tau)\nonumber \\
 & =\sup_{Q_{XW}:D(Q_{X|W}\|P_{X}|Q_{W})\le\alpha}\theta(\tau,Q_{XW})\\
 & =\sup_{\lambda\ge0}\sup_{f_{w}+g_{w}\le c,\forall w}\sup_{Q_{XW}:D(Q_{X|W}\|P_{X}|Q_{W})\le\alpha}\nonumber \\
 & \quad\mathbb{E}_{Q_{W}}\left[\lambda(Q_{X|W}(f_{W})-\tau)-\log P_{Y}(e^{-\lambda g_{W}})\right]\label{eq:-50}\\
 & =\sup_{\lambda\ge0}\sup_{f_{w}+g_{w}\le c,\forall w\in\{0,1\}}\sup_{Q_{X|W},p\in[0,1]:D(Q_{X|W}\|P_{X}|\Bern(p))\le\alpha}\nonumber \\
 & \quad\mathbb{E}_{W\sim\Bern(p)}\left[\lambda(Q_{X|W}(f_{W})-\tau)-\log P_{Y}(e^{-\lambda g_{W}})\right]\label{eq:-49}\\
 & =\sup_{\lambda\ge0}\sup_{f_{w}+g_{w}\le c,\forall w\in\{0,1\}}\text{\ensuremath{\sup_{p\in[0,1]}}}\inf_{\eta>0}\eta\alpha+\nonumber \\
 & \quad\eta\mathbb{E}_{W\sim\Bern(p)}\log P_{X}(e^{(1/\eta)\left(\lambda(f_{W}-\tau)-\log P_{Y}(e^{-\lambda g_{W}})\right)})\label{eq:-57}\\
 & =\sup_{\lambda\ge0}\sup_{f_{w}+g_{w}\le c,\forall w\in\{0,1\}}\inf_{\eta>0}\eta\alpha+\nonumber \\
 & \quad\eta\max_{w\in\{0,1\}}\log P_{X}(e^{(1/\eta)\left(\lambda(f_{w}-\tau)-\log P_{Y}(e^{-\lambda g_{w}})\right)})\label{eq:-58}\\
 & =\sup_{f_{w}+g_{w}\le c,\forall w\in\{0,1\}}\sup_{\lambda\ge0}\inf_{\eta>0}\max_{w\in\{0,1\}}\eta\alpha\nonumber \\
 & \quad+\eta\log P_{X}(e^{(\lambda/\eta)f_{w}})-\lambda\tau-\log P_{Y}(e^{-\lambda g_{w}}),
\end{align}
where in \eqref{eq:-49}, by Carathéodory's theorem, the alphabet
size of $Q_{W}$ can be restricted to be no larger than $2$, \eqref{eq:-57}
follows by Lemma \ref{lem:conditionalDE}, and \eqref{eq:-58} follows
by the minimax theorem since the objective function is convex in $\eta$.
\end{proof}
\begin{IEEEproof}[Proof of Corollary \ref{cor:uppersemicontinuity}]
By the monotonicity of $\psi$, $\lim_{\alpha'\uparrow\alpha}\lim_{\tau'\downarrow\tau}\psi(\alpha',\tau')\le\psi(\alpha,\tau)$.
So, we only need to focus on the case that $\lim_{\alpha'\uparrow\alpha}\lim_{\tau'\downarrow\tau}\psi(\alpha',\tau')<\infty$.
By the monotonicity of $\psi$, it holds that 
\begin{align}
 & \lim_{\alpha'\uparrow\alpha}\lim_{\tau'\downarrow\tau}\psi(\alpha',\tau')\nonumber \\
 & =\sup_{\alpha'<\alpha,\,\tau'>\tau}\psi(\alpha',\tau')\\
 & =\sup_{\alpha'<\alpha,\,\tau'>\tau}\sup_{f_{w}+g_{w}\le c,\forall w\in\{0,1\}}\sup_{\lambda\ge0}\text{\ensuremath{\sup_{p\in[0,1]}}}\inf_{\eta\ge0}\eta\alpha'-\lambda\tau'\nonumber \\
 & \quad+\mathbb{E}_{W\sim\Bern(p)}[\eta\log P_{X}(e^{\frac{\lambda}{\eta}f_{W}})-\log P_{Y}(e^{-\lambda g_{W}})]\label{eq:-31}\\
 & =\sup_{f_{w}+g_{w}\le c,\forall w\in\{0,1\}}\sup_{\lambda\ge0}\text{\ensuremath{\sup_{p\in[0,1]}}}\inf_{\eta\ge0}\sup_{\alpha'<\alpha,\,\tau'>\tau}\eta\alpha'-\lambda\tau'\nonumber \\
 & \quad+\mathbb{E}_{W\sim\Bern(p)}[\eta\log P_{X}(e^{\frac{\lambda}{\eta}f_{W}})-\log P_{Y}(e^{-\lambda g_{W}})]\label{eq:-30}\\
 & =\sup_{f_{w}+g_{w}\le c,\forall w\in\{0,1\}}\sup_{\lambda\ge0}\text{\ensuremath{\sup_{p\in[0,1]}}}\inf_{\eta\ge0}\eta\alpha-\lambda\tau\nonumber \\
 & \quad+\mathbb{E}_{W\sim\Bern(p)}[\eta\log P_{X}(e^{\frac{\lambda}{\eta}f_{W}})-\log P_{Y}(e^{-\lambda g_{W}})]\\
 & =\psi(\alpha,\tau).
\end{align}
where 
\begin{itemize}
\item by the continuous extension of $\eta\log P_{X}(e^{\frac{\lambda}{\eta}f_{w}})$
to $\eta=0$, $\inf_{\eta>0}$ in \eqref{eq:-57} is replaced by $\inf_{\eta\ge0}$
in \eqref{eq:-31};
\item the $\sup_{\alpha'<\alpha,\,\tau'>\tau}$ and $\inf_{\eta>0}$ are
swapped in \eqref{eq:-30} which follows by the general minimax theorem
\cite[Theorem 5.2.2]{nirenberg1974topics} by identifying that 1)
the optimal value of the sup-inf in \eqref{eq:-31} is finite since
it is upper bounded by $\lim_{\alpha'\uparrow\alpha}\lim_{\tau'\downarrow\tau}\psi(\alpha',\tau')$,
and 2) given $(\alpha',\tau')$ such that $\alpha'>0$, the objective
function in \eqref{eq:-31} goes to infinity as $\eta\to\infty$,
and hence, its sublevels are compact. 
\end{itemize}
\end{IEEEproof}

\section{Proof of Theorem \ref{thm:bound-2-1} }

\label{sec:Proof-of-Theorem} 

 Let $(B_{n})$ be the optimal sets given in Part (a) of Assumption
4.  By the optimality of $B_{n}$, for any $A$ it holds that 

\begin{align}
(P^{\otimes n})^{+}(A) & \ge(P^{\otimes n})^{+}(B_{n})\nonumber \\
 & =n^{1-1/p}e^{-n\alpha}\liminf_{r\downarrow0}F_{r}^{(n)}(B_{n}).\label{eq:-59}
\end{align}
By Part (a) of Assumption 4, 
\begin{equation}
\liminf_{r\downarrow0}F_{r}^{(n)}(B_{n})\ge F_{\epsilon}^{(n)}(B_{n})-\delta(\epsilon,n).\label{eq:-60}
\end{equation}
Therefore, 
\begin{align}
 & \liminf_{n\to\infty}\liminf_{r\downarrow0}F_{r}^{(n)}(B_{n})\nonumber \\
 & \ge\liminf_{n\to\infty}F_{\epsilon}^{(n)}(B_{n})-\delta(\epsilon,\infty)\\
 & \ge\liminf_{n\to\infty}\frac{\alpha-E_{0}^{(n)}(\alpha,\epsilon^{p})}{\epsilon}-\delta(\epsilon,\infty)\\
 & \ge\frac{\alpha-\lim_{\alpha'\downarrow\alpha}\lim_{r'\uparrow\epsilon}\psi(\alpha',r^{\prime p})}{\epsilon}-\delta(\epsilon,\infty)\label{eq:-43}\\
 & =\inf_{r'\in(0,\epsilon)}\frac{\alpha-\lim_{\alpha'\downarrow\alpha}\psi(\alpha',r^{\prime p})}{\epsilon}-\delta(\epsilon,\infty)\label{eq:-45}\\
 & =\inf_{r'\in(0,\epsilon)}\frac{\alpha-\lim_{\alpha'\downarrow\alpha}\psi(\alpha',r^{\prime p})}{r'}-\delta(\epsilon,\infty),
\end{align}
where $\delta(\epsilon,\infty):=\limsup_{n\to\infty}\delta(\epsilon,n)$,
\eqref{eq:-43} follows by Theorem \ref{thm:LD-1}, \eqref{eq:-45}
follows by since by the monotonicity of $\psi$, 
\begin{align*}
\lim_{\alpha'\downarrow\alpha}\lim_{r'\uparrow\epsilon}\psi(\alpha',r^{\prime p}) & =\sup_{\alpha'\in(0,\alpha)}\sup_{r'\in(0,\epsilon)}\psi(\alpha',r^{\prime p})\\
 & =\sup_{r'\in(0,\epsilon)}\sup_{\alpha'\in(0,\alpha)}\psi(\alpha',r^{\prime p})\\
 & =\sup_{r'\in(0,\epsilon)}\lim_{\alpha'\downarrow\alpha}\psi(\alpha',r^{\prime p}).
\end{align*}
Taking $\epsilon\downarrow0$, we obtain that 
\begin{align*}
\liminf_{n\to\infty}\liminf_{r\downarrow0}F_{r}^{(n)}(B_{n}) & \ge\xi(\alpha).
\end{align*}
Substituting this into \eqref{eq:-59} yields the desired inequality. 

We next prove the sharpness.  By Part (b) of Assumption 4,  there
is a family of sets $A_{n,\epsilon}\subseteq\mathcal{X}^{n}$ of probability
$e^{-n\alpha}$ such that 
\begin{equation}
\liminf_{r\downarrow0}F_{r}^{(n)}(A_{n,\epsilon})\le F_{\epsilon}^{(n)}(A_{n,\epsilon})+\delta(\epsilon,n).\label{eq:-60-3}
\end{equation}
Hence, 
\begin{align}
 & \limsup_{n\to\infty}\liminf_{r\downarrow0}F_{r}^{(n)}(A_{n,\epsilon})\nonumber \\
 & \le\limsup_{n\to\infty}F_{\epsilon}^{(n)}(A_{n,\epsilon})+\delta(\epsilon,\infty)\\
 & =\frac{\alpha-\liminf_{n\to\infty}E_{0}^{(n)}(\alpha,\epsilon^{p})}{\epsilon}+\delta(\epsilon,\infty)\nonumber \\
 & \le\frac{\alpha-\psi(\alpha,\epsilon^{p})}{\epsilon}+\delta(\epsilon,\infty),\label{eq:-43-1}
\end{align}
where \eqref{eq:-43-1} follows by Theorem \ref{thm:LD-1}.  Taking
$\epsilon\downarrow0$, we obtain that 
\begin{align*}
\limsup_{\epsilon\downarrow0}\limsup_{n\to\infty}\liminf_{r\downarrow0}F_{r}^{(n)}(A_{n,\epsilon}) & \le\xi(\alpha).
\end{align*}
Substituting this into \eqref{eq:-22} yields 
\begin{align*}
(P^{\otimes n})^{+}(A_{n,\epsilon}) & \le n^{1-1/p}e^{-n\alpha}(\xi(\alpha)+\hat{\delta}(\epsilon,n)),
\end{align*}
where 
\begin{equation}
\limsup_{\epsilon\downarrow0}\limsup_{n\to\infty}\hat{\delta}(\epsilon,n)=0.\label{eq:-68}
\end{equation}
By basic analysis, the condition in \eqref{eq:-68} implies that
there exists a sequence $\epsilon_{n}$ such that $\epsilon_{n}\to0$
and $\hat{\delta}(\epsilon_{n},n)\to0$ as $n\to\infty$. For such
a sequence, 
\begin{align*}
(P^{\otimes n})^{+}(A_{n,\epsilon_{n}}) & \le n^{1-1/p}e^{-n\alpha}(\xi(\alpha)+o_{n}(1)).
\end{align*}

\appendices{}

\section{Proofs of Lemma \ref{lem:bounds}}

\label{sec:Proofs-of-Lemma} Since $\psi_{m}(\alpha+\delta,\tau-\delta)<\infty$,
 there is some $Q_{Y|XW}$ such that 
\begin{align}
\mathbb{E}_{Q}[c(X,Y)] & \le\tau-\delta,\label{eq:-16}\\
D(Q_{Y|W}\|P_{Y}|Q_{W}) & \le\psi_{m}(\alpha+\delta,\tau-\delta)+\delta\label{eq:-18}
\end{align}
hold for all $Q_{XW}$ satisfying $D(Q_{X|W}\|P_{X}|Q_{W})\le\alpha+\delta$.

By assumption, $c(x,y)\le c_{\mathcal{X}}(x)+c_{\mathcal{Y}}(y)$.
So, 
\begin{equation}
\mathbb{E}_{Q}\left[c(X,Y)^{2}\right]\le2(\mathbb{E}_{Q}\left[c_{\mathcal{X}}(X)^{2}\right]+\mathbb{E}_{Q}\left[c_{\mathcal{Y}}(Y)^{2}\right]).\label{eq:-21}
\end{equation}
Since $\mathcal{X}$ is finite,  $c_{\mathcal{X}}$ is bounded. So,
$\mathbb{E}_{Q}\left[c_{\mathcal{X}}(X)^{2}\right]\le\max_{x}c_{\mathcal{X}}(x)^{2}$
 for all $Q_{X}$. It is well known that the relative entropy admits
the following duality: 
\[
D(Q\|P)=\sup_{g}\mathbb{E}_{Q}[g]-\log\mathbb{E}_{P}[\exp(g(Y))],
\]
where the supremum is taken over all measurable function $g$. Substituting
$(Q,P,g)\leftarrow(Q_{XW},P_{Y}Q_{W},c_{\mathcal{Y}}^{2})$ yields
that 
\[
D(Q_{Y|W}\|P_{Y}|Q_{W})\ge\mathbb{E}_{Q}[c_{\mathcal{Y}}^{2}(Y)]-\log\mathbb{E}_{P}[\exp(c_{\mathcal{Y}}^{2}(Y))].
\]
That is, 
\begin{align*}
\mathbb{E}_{Q}[c_{\mathcal{Y}}^{2}(Y)] & \le\psi_{m}(\alpha+\delta,\tau-\delta)+\delta\\
 & \qquad\qquad+\log\mathbb{E}_{P}[\exp(c_{\mathcal{Y}}^{2}(Y))].
\end{align*}
Substituting this into \eqref{eq:-21} yields that 
\begin{align}
\mathbb{E}_{Q}\left[c(X,Y)^{2}\right] & \le2(\max_{x}c_{\mathcal{X}}(x)^{2}+\psi_{m}(\alpha+\delta,\tau-\delta)\nonumber \\
 & \qquad\qquad+\delta+\log\mathbb{E}_{P}[\exp(c_{\mathcal{Y}}^{2}(Y))]).
\end{align}
That is, for the distribution $Q_{Y|XW}$, $\mathbb{E}_{Q}\left[c(X,Y)^{2}\right]$
is bounded uniformly for all $Q_{XW}$ satisfying $D(Q_{X|W}\|P_{X}|Q_{W})\le\alpha+\delta$.

\bibliographystyle{abbrv}
\bibliography{ref}

\begin{thebibliography}{10}

\bibitem{Ahls76}
R.~Ahlswede, P.~G\'{a}cs, and J.~K\"{o}rner.
\newblock {Bounds on conditional probabilities with applications in multi-user
  communication}.
\newblock {\em Z. Wahrscheinlichkeitstheorie verw. Gebiete}, 34(3):157--177,
  1976.

\bibitem{ahlswede1997identification}
R.~Ahlswede, E.-H. Yang, and Z.~Zhang.
\newblock Identification via compressed data.
\newblock {\em IEEE Transactions on Information Theory}, 43(1), 1997.

\bibitem{ahlswede1999asymptotical}
R.~Ahlswede and Z.~Zhang.
\newblock Asymptotical isoperimetric problem.
\newblock In {\em Proceedings of the 1999 IEEE Information Theory and
  Communications Workshop}, pages 85--87. IEEE, 1999.

\bibitem{alon1998asymptotic}
N.~Alon, R.~Boppana, and J.~Spencer.
\newblock An asymptotic isoperimetric inequality.
\newblock {\em Geometric \& Functional Analysis}, 8(3):411--436, 1998.

\bibitem{berend2014minimum}
D.~Berend, P.~Harremo{\"e}s, and A.~Kontorovich.
\newblock Minimum {KL}-divergence on complements of {$L_1$} balls.
\newblock {\em IEEE Transactions on Information Theory}, 60(6):3172--3177,
  2014.

\bibitem{bertsekas1996stochastic}
D.~P. Bertsekas and S.~E. Shreve.
\newblock {\em Stochastic optimal control: the discrete-time case}, volume~5.
\newblock Athena Scientific, 1996.

\bibitem{bogachev2007measure}
V.~I. Bogachev.
\newblock {\em Measure theory}, volume~2.
\newblock Springer Science \& Business Media, 2007.

\bibitem{borell1975brunn}
C.~Borell.
\newblock The {Brunn-Minkowski} inequality in {Gauss} space.
\newblock {\em Inventiones Mathematicae}, 30(2):207--216, 1975.

\bibitem{Cov06}
T.~M. Cover and J.~A. Thomas.
\newblock {\em Elements of Information Theory}.
\newblock Wiley-Interscience, 2nd edition, 2006.

\bibitem{csiszar1975divergence}
I.~Csisz{\'a}r.
\newblock I-divergence geometry of probability distributions and minimization
  problems.
\newblock {\em The annals of probability}, pages 146--158, 1975.

\bibitem{Csis00}
I.~Csisz\'{a}r.
\newblock The method of types.
\newblock {\em IEEE Transactions on Information Theory}, 44(6):2505--2523,
  1998.

\bibitem{Csi97}
I.~Csisz\'{a}r and J.~{K\"{o}rner}.
\newblock {\em Information Theory: Coding Theorems for Discrete Memoryless
  Systems}.
\newblock Cambridge University Press, 2011.

\bibitem{dembo1997information}
A.~Dembo.
\newblock Information inequalities and concentration of measure.
\newblock {\em The Annals of Probability}, pages 927--939, 1997.

\bibitem{Dembo}
A.~Dembo and O.~Zeitouni.
\newblock {\em Large Deviations Techniques and Applications}.
\newblock Springer, 2nd edition, 1998.

\bibitem{gibbs2002choosing}
A.~L. Gibbs and F.~E. Su.
\newblock On choosing and bounding probability metrics.
\newblock {\em International statistical review}, 70(3):419--435, 2002.

\bibitem{gozlan2005principe}
N.~Gozlan.
\newblock {\em Principe conditionnel de Gibbs pour des contraintes fines
  approch{\'e}es et in{\'e}galit{\'e}s de transport}.
\newblock PhD thesis, Universit{\'e} de Nanterre-Paris X, 2005.

\bibitem{gozlan2009characterization}
N.~Gozlan.
\newblock A characterization of dimension free concentration in terms of
  transportation inequalities.
\newblock {\em The Annals of Probability}, 37(6):2480--2498, 2009.

\bibitem{gozlan2007large}
N.~Gozlan and C.~L{\'e}onard.
\newblock A large deviation approach to some transportation cost inequalities.
\newblock {\em Probability Theory and Related Fields}, 139(1-2):235--283, 2007.

\bibitem{gozlan2010transport}
N.~Gozlan and C.~L{\'e}onard.
\newblock Transport inequalities. a survey.
\newblock {\em Markov Processes and Related Fields}, 16:635--736, 2010.

\bibitem{gozlan2017kantorovich}
N.~Gozlan, C.~Roberto, P.-M. Samson, and P.~Tetali.
\newblock Kantorovich duality for general transport costs and applications.
\newblock {\em Journal of Functional Analysis}, 273(11):3327--3405, 2017.

\bibitem{hayashi2017minimum}
M.~Hayashi and V.~Y.~F. Tan.
\newblock Minimum rates of approximate sufficient statistics.
\newblock {\em IEEE Transactions on Information Theory}, 64(2):875--888, 2017.

\bibitem{ho2009discontinuity}
S.~Ho and R.~W. Yeung.
\newblock On the discontinuity of the shannon information measures.
\newblock {\em IEEE Transactions on Information Theory}, 55(12):5362--5374,
  2009.

\bibitem{ledoux2001concentration}
M.~Ledoux.
\newblock {\em The concentration of measure phenomenon}.
\newblock Number~89. American Mathematical Soc., 2001.

\bibitem{margulis1974probabilistic}
G.~A. Margulis.
\newblock Probabilistic characteristics of graphs with large connectivity.
\newblock {\em Problemy peredachi informatsii}, 10(2):101--108, 1974.

\bibitem{Marton86}
K.~Marton.
\newblock A simple proof of the blowing-up lemma.
\newblock {\em IEEE Transactions on Information Theory}, 32(3):445--446, 1986.

\bibitem{marton1996bounding}
K.~Marton.
\newblock Bounding $\bar{d}$-distance by informational divergence: a method to
  prove measure concentration.
\newblock {\em The Annals of Probability}, 24(2):857--866, 1996.

\bibitem{milman2010isoperimetric}
E.~Milman.
\newblock Isoperimetric and concentration inequalities: equivalence under
  curvature lower bound.
\newblock {\em Duke Mathematical Journal}, 154(2):207 -- 239, 2010.

\bibitem{mitran2015on}
P.~Mitran.
\newblock On a markov lemma and typical sequences for polish alphabets.
\newblock {\em IEEE Transactions on Information Theory}, 61(10):5342--5356,
  2015.

\bibitem{nirenberg1974topics}
L.~Nirenberg.
\newblock {\em Topics in nonlinear functional analysis}, volume~6.
\newblock American Mathematical Soc., 1974.

\bibitem{RagSason}
M.~Raginsky and I.~Sason.
\newblock {\em Concentration of Measure Inequalities in Information Theory,
  Communications and Coding}, volume~10 of {\em Foundations and Trends in
  Communications and Information Theory}.
\newblock Now Publishers Inc, 2013.

\bibitem{sudakov1978extremal}
V.~N. Sudakov and B.~S. Tsirel'son.
\newblock Extremal properties of half-spaces for spherically invariant
  measures.
\newblock {\em Journal of Soviet Mathematics}, 9(1):9--18, 1978.

\bibitem{talagrand1995concentration}
M.~Talagrand.
\newblock Concentration of measure and isoperimetric inequalities in product
  spaces.
\newblock {\em Publications Math{\'e}matiques de l'Institut des Hautes Etudes
  Scientifiques}, 81(1):73--205, 1995.

\bibitem{talagrand1996transportation}
M.~Talagrand.
\newblock Transportation cost for gaussian and other product measures.
\newblock {\em Geometric \& Functional Analysis}, 6(3):587--600, 1996.

\bibitem{vajda1970note}
I.~Vajda.
\newblock Note on discrimination information and variation (corresp.).
\newblock {\em IEEE Transactions on Information Theory}, 16(6):771--773, 1970.

\bibitem{Erven}
T.~{van Erven} and P.~Harremo\"es.
\newblock {R{\'e}nyi} divergence and {Kullback-Leibler} divergence.
\newblock {\em IEEE Transactions on Information Theory}, 60(7):3797--3820,
  2014.

\bibitem{villani2003topics}
C.~Villani.
\newblock {\em Topics in optimal transportation}.
\newblock Number~58. American Mathematical Soc., 2003.

\bibitem{villani2008optimal}
C.~Villani.
\newblock {\em Optimal transport: old and new}, volume 338.
\newblock Springer Science \& Business Media, 2008.

\bibitem{wang2010sanov}
R.~Wang, X.~Wang, and L.~Wu.
\newblock Sanov's theorem in the {Wasserstein} distance: a necessary and
  sufficient condition.
\newblock {\em Statistics \& Probability Letters}, 80(5-6):505--512, 2010.

\bibitem{yu2021strong_article}
L.~Yu.
\newblock Strong {Brascamp--Lieb} inequalities.
\newblock {\em ArXiv e-prints, arXiv:2102.06935}, 2021.

\bibitem{yu2023dimension}
L.~Yu.
\newblock Dimension-free bounds for the union-closed sets conjecture.
\newblock {\em Entropy}, 25(5):767, 2023.

\bibitem{yu2020asymptotics}
L.~Yu.
\newblock Asymptotics of {Strassen's} optimal transport problem.
\newblock {\em arXiv preprint arXiv:1912.02051}, Nov. 2020.

\bibitem{YuTan2020_exact}
L.~Yu and V.~Y.~F. Tan.
\newblock On exact and {$\infty$}-{R\'enyi} common information.
\newblock {\em IEEE Transactions on Information Theory}, 66(6):3366--3406, Jun
  2020.

\bibitem{zalinescu2002convex}
C.~Zalinescu.
\newblock {\em Convex analysis in general vector spaces}.
\newblock World scientific, 2002.

\end{thebibliography}

\begin{IEEEbiographynophoto}{Lei Yu} (Member, IEEE)  received the B.E. and Ph.D. degrees in electronic  
engineering from the University of Science and Technology of China (USTC)  
in 2010 and 2015, respectively. From 2015 to 2020, he worked as a 
Post-Doctoral Researcher at the USTC, National University of Singapore, and  
University of California at Berkeley. He is currently an Associate  
Professor at the School of Statistics and Data Science, LPMC, KLMDASR,  
and LEBPS, Nankai University, China. Since 2024, he has served as  Associate Editor of  the IEEE Transactions on Information Theory. His research interests lie in the  
intersection of probability theory, information theory, and combinatorics.
\end{IEEEbiographynophoto}

\end{document}